\def\ig#1#2#3#4{\begin{figure}[!ht]\begin{center}%
\includegraphics[height=#2\textheight]{#1.eps}\caption{#4}\label{#3}%
\end{center}\end{figure}}
\def\thtext#1{
  \catcode`@=11
  \gdef\@thmcountersep{. #1}
  \catcode`@=12
}
\def\threst{
  \catcode`@=11
  \gdef\@thmcountersep{.}
  \catcode`@=12
}
\theoremstyle{plain}
\newtheorem*{mainthm}{Main Theorem}
\newtheorem{thm}{Theorem}[section]
\newtheorem{prop}[thm]{Proposition}
\newtheorem{cor}[thm]{Corollary}
\newtheorem{ass}[thm]{Assertion}
\newtheorem{lem}[thm]{Lemma}
\theoremstyle{definition}
\newtheorem{dfn}[thm]{Definition}
\newtheorem{rk}[thm]{Remark}
\newtheorem{constr}[thm]{Construction}
 \def\.{.\spacefactor\@m}
\def\N{{\mathbb N}}
\def\R{\mathbb R}
\def\a{\alpha}
\def\b{\beta}
\def\e{\varepsilon}
\def\dl{\delta}
\def\D{\Delta}
\def\g{\gamma}
\def\G{\Gamma}
\def\l{\lambda}
\def\r{\rho}
\def\s{\sigma}
\def\v{\varphi}
\def\0{\emptyset}
\def\:{\colon}
\def\<{\langle}
\def\>{\rangle}
\def\[{\llbracket}
\def\]{\rrbracket}
\def\c{\circ}
\def\rom#1{\emph{#1}}
\def\({\rom(}
\def\){\rom)}
\def\sm{\setminus}
\def\ss{\subset}
\def\sp{\supset}
\def\x{\times}
\def\diam{\operatorname{diam}}
\def\dis{\operatorname{dis}}
\def\Gr{\operatorname{Gr}}
\def\Mid{\operatorname{Mid}}
\def\opt{{\operatorname{opt}}}
\def\cB{{\cal B}}
\def\cC{{\cal C}}
\def\cD{{\cal D}}
\def\cF{{\cal F}}
\def\cM{{\cal M}}
\def\cP{{\cal P}}
\def\cR{{\cal R}}
\def\cS{{\cal S}}
\begin{document}
\title{Isometry Group of Gromov--Hausdorff Space}
\author{A.~O.~Ivanov, A.~A.~Tuzhilin}
\date{}
\maketitle

\begin{abstract}
The present paper is devoted to investigation of the isometry group of the Gromov--Hausdorff space, i.e., the metric space of compact metric spaces considered up to isometry and endowed with the Gromov--Hausdorff metric. The main goal is to present a proof of the following theorem by George Lowther (2015): \emph{the isometry group of the Gromov--Hausdorff space is trivial}~\cite{blog}. Unfortunately, the author himself has not publish an accurate text for 2 years passed from the publication of the draft~\cite{blog} that is full of excellent ideas mixed with unproved and wrong statements.
\end{abstract}

\section*{Introduction}
\markright{\thesection.~Introduction}
The present paper deals with the isometry group of the Gromov--Hausdorff space, i.e., the metric space consisting of isometry classes of compact metric spaces and endowed with the Gromov--Hausdorff metric (exact definitions can be found below, or, for instance, in ~\cite{BurBurIva}).

In 2015 during a discussion with Stavros Iliadis we paid attention to a conjecture that Stavros, according to him, proposed a few years ago: \emph{the isometry group of the Gromov--Hausdorff space is trivial}. To start with we have googled a blog~\cite{blog}, where Noah Schweber formulated the same conjecture\footnote{Here is the reference from the blog: ``Noah Schweber --- a postdoc at UW-Madison (previously a grad student at UC-Berkeley), interested in mathematical logic --- specifically, computability theory and reverse mathematics, set theory, and abstract model theory. I'm also interested in other Nifty Things, in mathematics and elsewhere''.}. Among comments to the conjecture we have found a positive ``solution'' given by George Lowther\footnote{The reference from the same blog: ``Apparently, this user prefers to keep an air of mystery about them''.}. We started to read the proof immediately and very soon met a number of obstacles: it turns out that many proofs are missed (that is natural for a draft), but there are some wrong statements also. The most striking example: it is stated that the Gromov--Hausdorff distance between finite metric spaces of the same cardinality is attained on a bijection (but simple computer simulation shows that it is not true in general case).

The present paper is a result of our critical retreat of the ideas from~\cite{blog}. However,  many constructions and proofs present here are not contained in~\cite{blog}. Some statements from~\cite{blog} we reformulate in a correct way. Besides that, we give geometrical interpretations to some constructions from~\cite{blog} that looked rather formal. Unfortunately, the author of the proof from~\cite{blog} did not publish an accurate text for more than 2 years passed.

Thus, our main goal is to prove the following theorem by George Lowther.

\begin{mainthm}
The isometry group of the Gromov--Hausdorff space is trivial.
\end{mainthm}

Let us mention that in the local sense the Gromov--Hausdorff space is rather symmetric, in particular, sufficiently small balls of the same radius centered at $n$-point general position spaces (the ones, where all non-zero distances are pairwise different and all triangle inequalities are strict) are isometric to each other~\cite{IvaTuzLocalStrIsom}. There it is also shown that the isometry group of sufficiently small balls centered at such spaces contains a subgroup isomorphic to the permutation group of an $n$-element set.

The authors are thankful to Stavros Iliadis for attracting their attention to this beautiful problem, and for many fruitful discussions. Also, the authors are thankful to George Lowther for brilliant ideas presented in~\cite{blog}.

\section{Main Definitions and Preliminary Results}
\markright{\thesection.~Main Definitions and Preliminary Results}
Let $X$ be a set. By $\#X$ we denoted the \emph{cardinality\/} of $X$.

Now, let $X$ be an arbitrary metric space. The distance between its points $x$ and $y$ is denoted by $|xy|$. If $A,B\ss X$ are nonempty subsets, then put $|AB|=\inf\bigl\{|ab|:a\in A,\,b\in B\bigr\}$. If $A=\{a\}$, then we write  $|aB|=|Ba|$ instead of $|\{a\}B|=|B\{a\}|$.

Let us fix the notations for the following standard objects related to a metric space $X$:
\begin{itemize}
\item for $x\in X$ and $r>0$ by $U_r(x)=\{y\in X:|xy|<r\}$ we denote the \emph{open ball centered at $x$ of radius $r$};
\item for $x\in X$ and $r\ge 0$  by $B_r(x)=\{y\in X:|xy|\le r\}$ and $S_r(x)=\{y\in X:|xy|=r\}$ we denote the \emph{closed ball\/} and the \emph{sphere centered at $x$ of radius $r$}, respectively;
\item for nonempty $A\in X$ and $r>0$ by $U_r(A)=\{x\in X:|xA|<r\}$ we denote the \emph{open neighbourhood of $A$ of radius $r$};
\item for nonempty $A\in X$ and $r\ge 0$  by $B_r(A)=\{x\in X:|xA|\le r\}$ and $S_r(A)=\{x\in X:|xA|=r\}$ we denote the \emph{closed neighbourhood\/} and the \emph{equidistant set of $A$ of radius $r$}.
\end{itemize}

\subsection{Hausdorff and Gromov--Hausdorff Distances}
For nonempty $A,\,B\ss X$ we put
$$
d_H(A,B)=\inf\bigl\{r>0:A\ss U_r(B)\ \text{and}\ B\ss U_r(A)\bigr\}=\max\{\sup_{a\in A}|aB|,\sup_{b\in B}|Ab|\}.
$$
This value is called the \emph{Hausdorff distance between $A$ and $B$}. It is well-known~\cite{BurBurIva} that the Hausdorff distance, being restricted to the set of all nonempty closed bounded subsets of $X$, forms a metric.

Let $X$ and $Y$ be metric spaces. A triple $(X',Y',Z)$ consisting of a metric space $Z$ and two its subsets $X'$ and $Y'$ that are isometric to $X$ and $Y$, respectively, is called a \emph{realization of the pair $(X,Y)$}. The \emph{Gromov--Hausdorff distance $d_{GH}(X,Y)$ between $X$ and $Y$} is the infimum of $r$ such that there exists a realization $(X',Y',Z)$ of the pair $(X,Y)$ with $d_H(X',Y')\le r$.

By $\cM$ we denote the set of all compact metric spaces considered up to an isometry.

\begin{thm}[\cite{BurBurIva}, \cite{IvaNikolaevaTuz}]\label{thm:general_props}
Being restricted on $\cM$, the distance $d_{GH}$ is a metric. The metric space $\cM$ is complete, separable, and geodesic\/ \(given any two points, there is a path between them, whose length equals to the distance between the points\/\).
\end{thm}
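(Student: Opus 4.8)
The plan is to verify the four assertions in turn, treating the metric axioms first and then the three topological and geometric properties.

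\emph{Metric axioms.} Nonnegativity and symmetry of $d_{GH}$ are immediate, since a realization of $(X,Y)$ is also a realization of $(Y,X)$. For the triangle inequality I would argue by gluing: given $\e>0$, choose a realization $(X',Y',Z_1)$ of $(X,Y)$ with $d_H(X',Y')<d_{GH}(X,Y)+\e$ and a realization $(Y'',Z'',Z_2)$ of $(Y,Z)$ with $d_H(Y'',Z'')<d_{GH}(Y,Z)+\e$; identifying the two isometric copies of $Y$ and taking the largest metric on $Z_1\sqcup_Y Z_2$ restricting to the given ones, one obtains a single space into which $X$, $Y$, $Z$ all embed isometrically, whence the triangle inequality for the Hausdorff distance gives $d_H(X,Z)\le d_H(X,Y)+d_H(Y,Z)<d_{GH}(X,Y)+d_{GH}(Y,Z)+2\e$, and $\e\to 0$ finishes this part. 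The only delicate axiom is positivity: $d_{GH}(X,Y)=0$ must force $X$ and $Y$ to be isometric. Here I would take realizations with Hausdorff distance tending to $0$, extract from them correspondences with distortion $\to 0$ (equivalently, $\e_n$-isometries), and use compactness of $X$ together with a diagonal extraction of Arzel\`a--Ascoli type to produce a genuine distance-preserving bijection in the limit; compactness is precisely what lets one pass from approximate to exact isometries.

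\emph{Separability.} A countable dense subset is furnished by the finite metric spaces all of whose pairwise distances are rational. Indeed, every compact $X$ admits a finite $\e$-net, which already lies within $\e$ of $X$ in $d_{GH}$, and perturbing its finitely many distances to nearby rationals changes $d_{GH}$ by at most a further small amount (estimate the identity correspondence); this produces a point of the countable family arbitrarily close to $X$.

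\emph{Completeness.} Given a Cauchy sequence in $\cM$, I would pass to a subsequence $(X_n)$ with $d_{GH}(X_n,X_{n+1})$ summable and realize the successive pairs in a common ambient space, gluing as in the triangle-inequality step so that all the $X_n$ sit isometrically inside one metric space $W$ with $\sum_n d_H(X_n,X_{n+1})<\infty$. The candidate limit is the Hausdorff limit of the $X_n$ inside the completion of $W$. The substantive point is to show this limit is \emph{compact}: uniform control of the Hausdorff distances yields uniform total boundedness of the family, which passes to the limit, while completeness of the ambient space gives closedness; together these give compactness and $X_n\to X$ in $d_{GH}$.

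\emph{Geodesity.} Finally, to show $\cM$ is geodesic I would realize $d_{GH}(X,Y)$ by an optimal correspondence $R\ss X\x Y$ (its existence following from compactness of the space of correspondences, or of admissible distance functions on $X\sqcup Y$) and interpolate linearly: for $t\in[0,1]$ put on the set $R$ the pseudometric
$$
\r_t\bigl((x,y),(x',y')\bigr)=(1-t)\,|xx'|+t\,|yy'|,
$$
and let $Z_t$ be the associated metric space. Then $Z_0=X$, $Z_1=Y$, and estimating the natural correspondence between $Z_s$ and $Z_t$ shows $d_{GH}(Z_s,Z_t)\le|s-t|\,d_{GH}(X,Y)$, while the triangle inequality forces equality, so $t\mapsto Z_t$ is a shortest path. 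I expect the main obstacles to be the positivity axiom and completeness: both hinge on upgrading approximate isometries to exact ones via compactness, and this limiting argument --- rather than any single algebraic identity --- is where the real work lies.
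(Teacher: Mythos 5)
The paper gives no proof of this theorem: it is quoted directly from \cite{BurBurIva} and \cite{IvaNikolaevaTuz}. Your sketch is correct and follows exactly the standard arguments of those references --- gluing of realizations for the triangle inequality and for completeness, upgrading $\e_n$-isometries to an isometry via compactness and a diagonal (Arzel\`a--Ascoli type) extraction for positivity, finite spaces with rational distances for separability, and linear interpolation of the two pseudometrics on an optimal correspondence for geodesicity --- so there is nothing to object to.
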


The next result is an immediate consequence of the definitions.

\begin{prop}[\cite{BurBurIva}]\label{prop:e-net-GH}
For an arbitrary nonempty subset  $Y$ of a metric space $X$ the inequality $d_{GH}(X,Y)\le d_H(X,Y)$ holds. In particular, if $Y$ is an $\e$-net in $X$, then $d_{GH}(X,Y)\le\e$.
\end{prop}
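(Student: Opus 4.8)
The plan is to exhibit one particular realization of the pair $(X,Y)$ for which the Hausdorff distance is exactly $d_H(X,Y)$, and then read off the bound directly from the definition of $d_{GH}$ as an infimum over \emph{all} realizations.

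First I would use the tautological realization. Since $Y\ss X$, the triple $(X',Y',Z)$ with $Z=X$, $X'=X$, and $Y'=Y$, where both $X'$ and $Y'$ sit inside $Z$ via the identity inclusion, is a realization of the pair $(X,Y)$: the space $X'=X$ is isometric to $X$ and the subset $Y'=Y$ is isometric to $Y$, in both cases by the identity map. In this realization the ambient metric is the original metric of $X$, so the Hausdorff distance $d_H(X',Y')$ computed inside $Z$ coincides with $d_H(X,Y)$ computed inside $X$. By the definition of the Gromov--Hausdorff distance, $d_{GH}(X,Y)$ is the infimum of those $r$ for which some realization has Hausdorff distance at most $r$; taking $r=d_H(X,Y)$ and the realization just described yields $d_{GH}(X,Y)\le d_H(X,Y)$, which is the first assertion.

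For the ``in particular'' clause I would evaluate $d_H(X,Y)$ through the max-formula. Because $Y\ss X$, every $y\in Y$ lies in $X$, so $|yX|=0$ and hence $\sup_{y\in Y}|yX|=0$; therefore $d_H(X,Y)=\max\{\sup_{x\in X}|xY|,\,0\}=\sup_{x\in X}|xY|$. The hypothesis that $Y$ is an $\e$-net in $X$ means precisely that $X\ss U_\e(Y)$ up to the boundary value, i.e. $\sup_{x\in X}|xY|\le\e$. Combining this with the previous paragraph gives $d_{GH}(X,Y)\le d_H(X,Y)=\sup_{x\in X}|xY|\le\e$.

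There is essentially no obstacle here: as the surrounding text already notes, the statement is an immediate consequence of the definitions. The only point demanding the slightest care is the bookkeeping that the infimum defining $d_{GH}$ is genuinely witnessed by the tautological realization and that both Hausdorff distances are taken in the same ambient space $X$; once the definitions are unwound, no analytic content remains.
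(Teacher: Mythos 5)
Your proof is correct and is exactly the argument the paper has in mind: it states the proposition without proof as an immediate consequence of the definitions, and the tautological realization $(X,Y,X)$ together with the observation that $\sup_{y\in Y}|Xy|=0$ for $Y\ss X$ is the standard way to unwind those definitions. Nothing to add.
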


To calculate the Gromov--Hausdorff distance it is convenient to use the technique of cor\-res\-pon\-den\-ces.

Let $X$ and $Y$ be any nonempty sets. Recall that a \emph{relation\/} between $X$ and $Y$ is a subset of the Cartesian product $X\x Y$. By $\cP(X,Y)$  we denote the set of all \textbf{nonempty\/} relations between $X$ and $Y$. Let us consider each relation $\s\in\cP(X,Y)$ as a multivalued mapping, whose domain could be less that the whole set $X$. Then, similarly to the case of mappings, for each $x\in X$ and any $A\ss X$ their images $\s(x)$ and $\s(A)$ are defined, and for each $y\in Y$ and any $B\ss Y$ their pre-images $\s^{-1}(y)$ and $\s^{-1}(B)$ are defined as well.

A relation $R\in\cP(X,Y)$ is called a \emph{correspondence} if, being restricted onto $R$, the canonical projections $\pi_X\:(x,y)\mapsto x$ and $\pi_Y\:(x,y)\mapsto y$ are surjective, or, that is equivalent, if $R(X)=Y$ and $R^{-1}(Y)=X$. By $\cR(X,Y)$  we denote the set of all correspondences between $X$ and $Y$.

Let $X$ and $Y$ be arbitrary metric spaces. The value
$$
\dis\s=\sup\Bigl\{\bigl||xx'|-|yy'|\bigr|: (x,y),(x',y')\in\s\Bigr\}.
$$
is called the \emph{distortion $\dis\s$ of a relation $\s\in\cP(X,Y)$}.

\begin{prop}[\cite{BurBurIva}]
For any metric spaces $X$ and $Y$ it holds
$$
d_{GH}(X,Y)=\frac12\inf\bigl\{\dis R:R\in\cR(X,Y)\bigr\}.
$$
\end{prop}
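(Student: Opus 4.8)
The plan is to prove the two inequalities separately by translating between realizations, which define $d_{GH}$, and correspondences, which define the right-hand side. First I would establish that $d_{GH}(X,Y)\le\frac12\dis R$ for every $R\in\cR(X,Y)$. Fix such an $R$ and set $2r=\dis R$. The idea is to glue $X$ and $Y$ into a single space realizing the pair with small Hausdorff distance: on the disjoint union $X\sqcup Y$ keep the original distances inside $X$ and inside $Y$, and for $x\in X$, $y\in Y$ define
$$
\r(x,y)=\inf\bigl\{|xa|+r+|by|:(a,b)\in R\bigr\}.
$$
I would then check that $\r$ is a (pseudo)metric whose restrictions to $X$ and $Y$ recover the given metrics. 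Symmetry is immediate, and the only nontrivial triangle inequalities are the mixed ones. For $x,x'\in X$ and $y\in Y$ the inequality $\r(x,y)\le|xx'|+\r(x',y)$ follows directly from the defining infimum, while the reverse-type inequality $|xx'|\le\r(x,y)+\r(y,x')$ is exactly where the distortion bound enters: choosing near-optimal pairs $(a,b),(a',b')\in R$, one uses $|by|+|b'y|\ge|bb'|$ together with $|aa'|\le|bb'|+2r$ (the distortion estimate) to obtain $\r(x,y)+\r(y,x')\ge|xa|+|aa'|+|a'x'|\ge|xx'|$. Since $(a,a)$ with $a$ ranging over the domain and the surjectivity of $R$ give $\r(a,b)\le r$ for every $(a,b)\in R$, every point of $X$ lies within $r$ of $Y$ and conversely, so $d_H(X,Y)\le r$ in this realization, whence $d_{GH}(X,Y)\le r=\frac12\dis R$. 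Taking the infimum over $R$ yields one inequality.

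For the opposite inequality $\frac12\inf_R\dis R\le d_{GH}(X,Y)$, I would start from an arbitrary realization $(X',Y',Z)$ with $d_H(X',Y')<r$ and, identifying $X$ with $X'$ and $Y$ with $Y'$, define the relation
$$
R=\bigl\{(x,y)\in X\x Y:|x'y'|_Z<r\bigr\}.
$$
The Hausdorff condition $X'\ss U_r(Y')$ and $Y'\ss U_r(X')$ guarantees that both projections of $R$ are surjective, so $R\in\cR(X,Y)$. The distortion estimate is then a short computation in $Z$: for $(x_1,y_1),(x_2,y_2)\in R$, the triangle inequality gives $|x_1'x_2'|_Z<|y_1'y_2'|_Z+2r$ and symmetrically, so $\bigl||x_1x_2|-|y_1y_2|\bigr|<2r$, hence $\dis R\le 2r$. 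Thus $\frac12\inf_R\dis R\le r$ for every $r>d_H(X',Y')$ and every realization; passing to the infimum over realizations gives $\frac12\inf_R\dis R\le d_{GH}(X,Y)$, and combining the two inequalities finishes the proof.

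The main obstacle is the first direction, specifically verifying that the glued function $\r$ is an honest metric extending the original ones rather than introducing shortcuts that shrink the intrinsic distances; this is where the distortion hypothesis $\dis R=2r$ is genuinely needed, via the mixed triangle inequalities sketched above. A secondary technical point is bookkeeping of strict versus non-strict inequalities and the non-attainment of infima, which I would handle throughout by standard $\e$-arguments: working with $d_H(X',Y')<r$ and the open condition in the definition of $R$, and passing to limits $r\downarrow d_{GH}(X,Y)$ at the end. If one insists on a genuine metric rather than a pseudometric in the realization, the zero-distance identification does not change the Hausdorff distance between the images, so it is harmless.
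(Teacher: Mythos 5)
Your proof is correct and is the standard argument (gluing $X\sqcup Y$ via a semi-metric built from a correspondence of distortion $2r$ for one inequality, and extracting the relation of pairs at distance $<r$ from a realization for the other). The paper itself gives no proof of this proposition, citing \cite{BurBurIva}, and your argument is essentially the one found there, so there is nothing to reconcile.
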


The technique  of correspondences can simplify proofs of various well-known facts.

For any metric space $X$ and a real number $\l>0$ by $\l X$ we denote the metric space obtained from $X$ by multiplication of all the distances by $\l$.

\begin{prop}[\cite{BurBurIva}]\label{prop:GH_simple}
Let $X$ and $Y$ be metric spaces. Then
\begin{enumerate}
\item\label{prop:GH_simple:1} if $X$ is the single-point metric space, then $d_{GH}(X,Y)=\frac12\diam Y$\rom;
\item\label{prop:GH_simple:2} if $\diam X<\infty$, then
$$
d_{GH}(X,Y)\ge\frac12|\diam X-\diam Y|;
$$
\item\label{prop:GH_simple:3} $d_{GH}(X,Y)\le\frac12\max\{\diam X,\diam Y\}$, in particular, $d_{GH}(X,Y)<\infty$ for bounded $X$ and $Y$\rom;
\item\label{prop:GH_simple:4} for any $X\in\cM$ and any $\l\ge0$, $\mu\ge0$ we have $d_{GH}(\l X,\mu X)=\frac12|\l-\mu|\diam X$\rom; this immediately implies that the curve $\g(t):=t\,X$ is a shortest one for any pair of its points\rom;
\item\label{prop:GH_simple:5} for any $X,Y\in\cM$ and any $\l>0$ we have $d_{GH}(\l X,\l Y)=\l d_{GH}(X,Y)$. Moreover, for $\l\ne1$ the unique space that remains the same under this operation is the single-point space. In other words, the multiplication of a metric by a number $\l>0$ is a homothety of the space $\cM$ with the center at the single-point metric space.
\end{enumerate}
\end{prop}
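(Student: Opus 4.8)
The plan is to derive every item directly from the correspondence formula $d_{GH}(X,Y)=\frac12\inf\{\dis R:R\in\cR(X,Y)\}$, exhibiting a convenient correspondence for each upper bound and extracting near-extremal pairs for each lower bound. Items (1) and (3) come first and fastest. If $X=\{p\}$ is the single point, the only correspondence is the full relation $R=\{p\}\x Y$, for which $\dis R=\sup\bigl\{\bigl||pp|-|yy'|\bigr|:y,y'\in Y\bigr\}=\diam Y$; since this is the unique correspondence, $d_{GH}(X,Y)=\frac12\diam Y$. For (3) I would take the full correspondence $R=X\x Y$ and apply the elementary inequality $|a-b|\le\max\{a,b\}$ for $a,b\ge0$ with $a=|xx'|$ and $b=|yy'|$, giving $\dis R\le\max\{\diam X,\diam Y\}$ and hence the bound.

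For the lower bound (2) I would argue by cases on which diameter is larger, the idea being to select a near-diameter pair in the ``large'' space and transport it through the correspondence into the ``small'' one. Fix $R\in\cR(X,Y)$ and $\e>0$. If $\diam X\ge\diam Y$, choose $x,x'\in X$ with $|xx'|>\diam X-\e$, pick $y\in R(x)$ and $y'\in R(x')$ (nonempty since $R$ is a correspondence), and estimate $|xx'|-|yy'|>\diam X-\e-\diam Y$, whence $\dis R\ge\diam X-\diam Y$; the opposite case is symmetric, using $R^{-1}$ and a near-diameter pair of $Y$. Letting $\e\to0$ and taking the infimum over $R$ yields $d_{GH}(X,Y)\ge\frac12|\diam X-\diam Y|$, with the convention that both sides are $+\infty$ when $\diam Y=\infty$.

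Items (4) and (5) rest on the observation that $\l X,\mu X$ (resp.\ $\l X,\l Y$) share the same underlying set(s) as $X$ (resp.\ as $X,Y$), so a correspondence is literally the same subset while only the metric rescales. For (4) the diagonal correspondence $R=\{(x,x):x\in X\}$ satisfies $\dis R=\sup\bigl\{\bigl|\l|xx'|-\mu|xx'|\bigr|\bigr\}=|\l-\mu|\diam X$, giving $d_{GH}(\l X,\mu X)\le\frac12|\l-\mu|\diam X$; the matching lower bound is (2) applied to the pair $(\l X,\mu X)$, since $\diam(\l X)=\l\diam X$. For (5), every $R\in\cR(X,Y)$ obeys $\dis_{\l X,\l Y}R=\l\,\dis_{X,Y}R$, so passing to infima multiplies $d_{GH}$ by $\l$. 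The fixed-point assertion then follows from diameters: if $\l X$ is isometric to $X$ with $\l\ne1$, then $\l\diam X=\diam X$ forces $\diam X=0$, i.e.\ $X$ is a single point; combining this scaling with (1) (so distances to the one-point space scale by $\l$ as well) gives the homothety interpretation.

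The shortest-curve claim in (4) is then immediate: for any partition $s=t_0<\dots<t_n=t$ one has $\sum_i d_{GH}(\g(t_i),\g(t_{i+1}))=\frac12\diam X\sum_i(t_{i+1}-t_i)=\frac12|t-s|\diam X=d_{GH}(\g(s),\g(t))$, so the length of $\g$ between any two of its points equals their distance. I expect no genuine difficulty here; the only place that demands care is the case analysis in (2) together with the bookkeeping for the infinite-diameter case, while everything else is a direct substitution into the correspondence formula.
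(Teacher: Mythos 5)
The paper states this proposition as a cited result from Burago--Burago--Ivanov and gives no proof of its own, so there is nothing to compare against line by line; your correspondence-based argument is correct and is precisely the kind of proof the paper alludes to when it remarks, just before this statement, that ``the technique of correspondences can simplify proofs of various well-known facts.'' All five items check out: the unique correspondence in (1), the full correspondence with $|a-b|\le\max\{a,b\}$ in (3), the transport of near-diameter pairs in (2) including the unbounded case, the diagonal correspondence matched against (2) in (4), and the rescaling of distortions plus the diameter argument for the fixed point in (5) are each sound. The only point worth a footnote is the degenerate case $\l=0$ or $\mu=0$ in (4), where $0\cdot X$ collapses to the one-point space and the diagonal correspondence should be replaced by an appeal to item (1), which you have in hand anyway.
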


Thus, the Gromov--Hausdorff space looks like a cone with the vertex at the single-point space, and with generators that are geodesics, see Figure~\ref{fig:gh-space}.

\ig{gh-space-eng}{0.35}{fig:gh-space}{The Gromov--Hausdorff space: some general properties.}

\subsection{Irreducible Correspondences}
Let $X$ and $Y$ be finite metric spaces, then the set $\cR(X,Y)$ is finite, thus there exists an $R\in\cR(X,Y)$ such that $d_{GH}(X,Y)=\frac12\dis R$. Every such correspondence $R$ we call \emph{optimal}. Notice that optimal correspondences always exist for any compact metric spaces $X$ and $Y$, see~\cite{Memoli} and~\cite{IvaIliadisTuz}. By $\cR_\opt(X,Y)$  we denote the set of all optimal correspondences between $X$ and $Y$. Thus, the following result holds.

\begin{prop}[\cite{Memoli}, \cite{IvaIliadisTuz}]
Let $X$ and $Y$ be compact metric spaces. Then $\cR_\opt(X,Y)\ne\0$.
\end{prop}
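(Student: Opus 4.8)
The plan is to realize the infimum defining $d_{GH}$ via a compactness argument in the hyperspace of closed subsets of $X\x Y$. First I would observe that we may restrict attention to \emph{closed} correspondences: the closure $\overline{R}$ of a correspondence $R$ in $X\x Y$ is again a correspondence, since passing to the closure only enlarges the two projections and hence preserves their surjectivity, while $\dis\overline{R}=\dis R$ because the function $(x,y,x',y')\mapsto\bigl||xx'|-|yy'|\bigr|$ is continuous, so its supremum over $R$ and over $\overline R$ coincide. Thus $2d_{GH}(X,Y)=\inf\bigl\{\dis R:R\in\cR(X,Y),\ R\text{ closed}\bigr\}$, and this infimum is a finite nonnegative number $d$, being bounded above by $\max\{\diam X,\diam Y\}$.

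Next, since $X$ and $Y$ are compact, so is $X\x Y$, and therefore the set $\cH(X\x Y)$ of nonempty closed subsets of $X\x Y$, equipped with the Hausdorff metric, is itself compact (Blaschke selection theorem). I would choose a minimizing sequence of closed correspondences $R_n$ with $\dis R_n\to d$ and, by this compactness, extract a subsequence $R_{n_k}$ converging in the Hausdorff metric to some nonempty closed set $R\ss X\x Y$.

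It then remains to verify that $R$ is an optimal correspondence. To see that $R$ is a correspondence, take any $x\in X$; for each $k$ there is $y_k\in Y$ with $(x,y_k)\in R_{n_k}$, and by compactness of $Y$ we may assume $y_k\to y$, so $(x,y_k)\to(x,y)$; since the Hausdorff limit of closed subsets of a compact space absorbs all such limits of points, $(x,y)\in R$, giving $\pi_X(R)=X$, and symmetrically $\pi_Y(R)=Y$. To bound the distortion, take any two pairs $(x,y),(x',y')\in R$; by Hausdorff convergence there exist $(x_k,y_k),(x_k',y_k')\in R_{n_k}$ with $(x_k,y_k)\to(x,y)$ and $(x_k',y_k')\to(x',y')$, and since $\bigl||x_kx_k'|-|y_ky_k'|\bigr|\le\dis R_{n_k}\to d$, continuity of the distances yields $\bigl||xx'|-|yy'|\bigr|\le d$. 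Hence $\dis R\le d$, while $\dis R\ge 2d_{GH}(X,Y)=d$ by definition, so $\dis R=d$ and $R\in\cR_\opt(X,Y)$.

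I expect the main subtlety to lie in the second and third steps taken together: one must invoke the correct compactness of the hyperspace and then show that the Hausdorff limit simultaneously preserves surjectivity of both projections and satisfies the lower semicontinuity bound on the distortion. None of these points is hard in isolation, but they are precisely the places where compactness of $X$ and $Y$ is indispensable — the statement genuinely fails for noncompact spaces — so the argument must actually use it at each point rather than tacitly assume it.
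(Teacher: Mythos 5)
Your argument is correct. The paper itself gives no proof of this proposition --- it is quoted from~\cite{Memoli} and~\cite{IvaIliadisTuz} --- and your compactness argument (restrict to closed correspondences, use compactness of the hyperspace of closed subsets of $X\x Y$ under the Hausdorff metric to extract a convergent minimizing sequence, then check that surjectivity of both projections and the distortion bound pass to the Hausdorff limit) is essentially the standard proof found in those references.
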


The inclusion relation generates the standard partial order on $\cR(X,Y)$, namely, $R_1\le R_2$ iff $R_1\ss R_2$. The correspondences minimal with respect to this order are called \emph{irreducible}. By $\cR^0(X,Y)$ we denote the set of all irreducible correspondences between $X$ and $Y$. It is shown in~\cite{IvaTuzIrreducible} that each $R\in\cR(X,Y)$ contains an irreducible correspondences and, thus, the following result holds.

\begin{prop}
For any metric spaces $X$ and $Y$ we have $\cR^0(X,Y)\ne\0$.
\end{prop}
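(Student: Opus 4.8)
The plan is to obtain the statement from two ingredients. First, $\cR(X,Y)\ne\0$ is immediate: for nonempty $X,Y$ the full relation $X\x Y$ has both projections surjective, hence is a correspondence. Second, every $R\in\cR(X,Y)$ contains an irreducible correspondence, which is the quoted fact and the real content; combining the two yields $\cR^0(X,Y)\ne\0$. I would prove the containment directly. It is convenient to view a correspondence $R\ss X\x Y$ as the edge set of a bipartite graph on $X\sqcup Y$; surjectivity of the two projections says exactly that this graph has no isolated vertex, i.e.\ $R$ is an \emph{edge cover}, and $R$ is irreducible iff it is a \emph{minimal} edge cover. A brief local argument shows a minimal edge cover contains no cycle and no path on four vertices (an interior edge would be removable), so each of its components is a star; thus irreducible correspondences are precisely star forests and the task reduces to: every edge cover contains a minimal one.

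It is worth first recording why the naive route fails, since this dictates the right one. Applying Zorn's lemma to $(\cR(X,Y),\sp)$ would require every descending chain of correspondences to have a correspondence below it, but intersections of such chains need not be correspondences. For instance, with $X=Y=\N$ put $R_n=\bigl(\{1\}\x\{m:m\ge n\}\bigr)\cup\bigl(\{i:i\ge2\}\x\N\bigr)$; each $R_n$ is a correspondence and $R_{n+1}\ss R_n$, yet $\bigcap_n R_n=\{i:i\ge2\}\x\N$ omits the point $1\in X$ and so is not a correspondence. Hence such a chain has no lower bound and Zorn does not apply to $\cR$ directly.

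The remedy is to build a minimal cover around a maximal matching, because a union of a chain of matchings is again a matching, so Zorn's lemma does produce a maximal matching $M\ss R$. Let $U$ be the set of $M$-unmatched vertices. By maximality every edge of $R$ meets $M$, so each $u\in U$ has all its neighbours matched; since the graph has no isolated vertex, $u$ has at least one neighbour, and I choose one incident edge $f_u\in R$ (axiom of choice), setting $F=\{f_u:u\in U\}$. Then $M\cup F$ is an edge cover. Its components are ``double stars'': one matched edge $(a,b)$ together with the degree-one pendants attached to $a$ and to $b$ (a pendant cannot join two matched edges). I then prune: in every component in which both $a$ and $b$ carry a pendant I delete the central edge $(a,b)$, and keep all other edges; call the result $R_0\ss R$.

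It remains to check $R_0$ is irreducible, i.e.\ a minimal edge cover. Deleting an edge affects the covering status of its two endpoints only, so minimality is a componentwise condition; within each double star the pruning rule leaves no removable edge — a pendant is the unique cover of its leaf, and a central edge is retained exactly when deleting it would expose an endpoint. Hence $R_0$ is a minimal edge cover, an irreducible correspondence contained in $R$. The main obstacle is thus conceptual rather than computational: recognising that direct transfinite minimisation is blocked by the ``escaping'' chains above, and that a maximal matching supplies a union-stable skeleton whose pendant completion can be pruned locally into a star forest. The one delicate point is the componentwise minimality verification in the final step.
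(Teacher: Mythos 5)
Your proof is correct, and there is in fact nothing in the paper to compare it against: the paper does not prove this proposition but merely cites the fact that every $R\in\cR(X,Y)$ contains an irreducible correspondence from an external reference. Your route supplies the missing argument. Reading a correspondence as an edge cover of the bipartite graph on $X\sqcup Y$, extracting a maximal matching $M\ss R$ by Zorn's lemma (legitimate, since the union of a chain of matchings is a matching), completing it to an edge cover by attaching one pendant edge $f_u\in R$ to each unmatched vertex $u$, and then pruning the central edge of every double star that carries pendants on both sides is a clean Gallai-style construction, and your preliminary counterexample with $R_n=\bigl(\{1\}\x\{m:m\ge n\}\bigr)\cup\bigl(\{i:i\ge2\}\x\N\bigr)$ correctly identifies why one cannot simply apply Zorn to descending chains in $\bigl(\cR(X,Y),\sp\bigr)$. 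Two minor points deserve a line each but are not gaps. First, your final check only shows that no \emph{single} edge of $R_0$ is removable; to get irreducibility (no proper sub-correspondence) note that if $S\subsetneq R_0$ were a correspondence and $e\in R_0\sm S$, then $R_0\sm\{e\}\sp S$ would again be a correspondence, contradicting non-removability of $e$. Second, your characterization of irreducible correspondences as star forests is consistent with, and in fact sharpens, Proposition~\ref{prop:decompose_inrreducible}: it adds that each block $X_i\x Y_i$ in that decomposition must satisfy $\min(\#X_i,\#Y_i)=1$, since a block with both sides of size at least $2$ contains a removable edge.
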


The next results describes irreducible correspondences.

\begin{prop}\label{prop:decompose_inrreducible}
For each $R\in\cR^0(X,Y)$ there exist partitions $R_X=\{X_i\}_{i\in I}$ and $R_Y=\{Y_i\}_{i\in I}$ of the spaces $X$ and $Y$, respectively, such that $R=\cup_{i\in I}X_i\x Y_i$.
\end{prop}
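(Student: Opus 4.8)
The plan is to observe that this statement does not involve the metrics on $X$ and $Y$ at all: it is a purely combinatorial fact about minimal correspondences, so I would forget the distances and treat $R$ simply as a bipartite relation. The first step is to make irreducibility explicit through a local \emph{essential edge} criterion. Given $(x,y)\in R$, the relation $R\sm\{(x,y)\}$ is again a correspondence precisely when $x$ retains an image and $y$ retains a preimage, i.e. precisely when $R(x)\ne\{y\}$ and $R^{-1}(y)\ne\{x\}$ (surjectivity of the two projections can only break at the endpoints of the removed pair). Hence $R\in\cR^0(X,Y)$ if and only if every pair $(x,y)\in R$ satisfies $R(x)=\{y\}$ or $R^{-1}(y)=\{x\}$; this equivalence is the workhorse of the whole argument.

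The key step is then the following lemma: for an irreducible $R$ and any $x,x'\in X$, either $R(x)=R(x')$ or $R(x)\cap R(x')=\0$. I would prove it by contradiction. Suppose $R(x)\ne R(x')$ while $R(x)\cap R(x')\ne\0$; then necessarily $x\ne x'$, and after possibly swapping $x$ and $x'$ I may pick $y_0\in R(x)\sm R(x')$ together with some $y_1\in R(x)\cap R(x')$. Looking at the edge $(x,y_1)\in R$, the elements $y_0\ne y_1$ both lie in $R(x)$, so $R(x)\ne\{y_1\}$, while $x\ne x'$ both lie in $R^{-1}(y_1)$, so $R^{-1}(y_1)\ne\{x\}$. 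This violates the essential-edge criterion and contradicts irreducibility.

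Granting the lemma, the partition assembles itself. The images $\{R(x):x\in X\}$ cover $Y$ because $R(X)=Y$, and by the lemma any two of them coincide or are disjoint, so their distinct values form a partition $\{Y_i\}_{i\in I}$ of $Y$. Setting $X_i=R^{-1}(Y_i)=\{x\in X: R(x)=Y_i\}$ yields a partition of $X$ indexed by the same set $I$ (every $x$ has a nonempty image equal to exactly one $Y_i$), and a direct check of both inclusions gives $R=\cup_{i\in I}X_i\x Y_i$. The only genuinely delicate point is the opening reduction: one must verify carefully that deleting a single non-essential pair really leaves a correspondence, since the entire downstream argument rests on the resulting characterization of irreducibility. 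I do not expect the possibly infinite index set $I$ to cause any difficulty, as the reasoning is entirely set-theoretic.
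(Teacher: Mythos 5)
Your proof is correct and follows essentially the same route as the paper: the paper also establishes that the families of images and preimages are partitions by showing that if two images properly overlap, one can delete a single pair from $R$ and still have a correspondence, contradicting irreducibility. Your ``essential edge'' criterion is just a cleaner packaging of exactly that deletion argument, and the assembly of $R=\cup_{i\in I}X_i\x Y_i$ proceeds identically.
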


\begin{proof}
Put $R_X=\cup_{y\in Y}\bigl\{R^{-1}(y)\bigr\}$, $R_Y=\cup_{x\in X}\bigl\{R(x)\bigr\}$, and let us show that $R_X$ and $R_Y$ are partitions. Suppose otherwise, and let, say, $R_Y$ is not a partition. Since $R$ is a correspondence, then $R_Y$ is a covering of $Y$ such that some of its elements $R(x)$ and $R(x')$ for $x\ne x'$ intersect each other, but, due to definition of $R_Y$, do not coincide. Let $y\in R(x)\cap R(x')$, then $(x,y),(x',y)\in R$. Since $R(x)\ne R(x')$, one of these sets contains an element which does not lie in the other one. To be definite, let $y'\in R(x')\sm R(x)$. Then $(x',y')\in R$, therefore, if we remove  $(x',y)$ from $R$, then we obtain a relation $\s$ such that $y\in\s(x)$ and $x'\in\s^{-1}(y')$, so $\s$ is a correspondence. The latter contradicts to irreducibility of $R$.

Thus, let us write down the partition $R_X$ in the form $\{X_i\}_{i\in I}$. Notice that for any $x,x'\in X_i$ we have $R(x)=R(x')$. Indeed, if $X_i=R^{-1}(y)$, then $R(x)$ and $R(x')$ contain $y$ and, therefore, they intersect each other. However, $R_Y$ is a partition, so we get $R(x)=R(x')$.

Choose arbitrary $i\in I$, $x\in X_i$, and put $Y_i=R(x)$. The latter definition is correct, because according to the above reasoning it does not depend on the choice of $x$. Now we show that the correspondence $\v\:X_i\mapsto Y_i$ is a bijection between $R_X$ and $R_Y$.

If $\v$ is not injective, then there exist $x,x'\in X$ belonging to different elements of the partition  $R_X$ and such that $R(x)=R(x')$. However, in this case for $y\in R(x)$ it holds $x,x'\in R^{-1}(y)\in R_X$, a contradiction.

At last, $\v$ is surjective because for any $Y_i$, $y\in Y_i$, the set $R^{-1}(y)$ is an element of the partition $R_X$. Choose an arbitrary $x\in R^{-1}(y)$. Then $R(x)\in R_Y$ contains $y$, thus $\v\bigl(R^{-1}(y)\bigr)=Y_i$.

Since for any $x,x'\in X_i$ we have $R(x)=R(x')=Y_i$, then $X_i\x Y_i\ss R$. On the other hand, since $R_X$ is a partition of $X$, then for any $x\in X$ there exists $X_i\in R_X$ such that $x\in X_i$, therefore, each $(x,y)\in R$ is contained in some $X_i\x Y_i$.
\end{proof}

\subsection{Partitions}
For any nonempty subsets $A$ and $B$ of a metric space $X$ we put
$$
|AB|'=\sup\bigl\{|ab|:a\in A,\,b\in B\bigr\}.
$$

If $D=\{X_i\}_{i\in I}$ is a partition of a metric space $X$, then we define the \emph{diameter of this partition\/} as follows: $\diam D=\sup_{i\in I}\diam X_i$. We also put
$$
\a(D)=\inf\bigl\{|X_iX_j|:i,j\in I,\,i\ne j\bigr\},\ \ \b(D)=\sup\bigl\{|X_iX_j|':i,j\in I,\,i\ne j\bigr\}.
$$

The next result can be easily obtained from the definition of distortion and from Proposition~\ref{prop:decompose_inrreducible}.

\begin{prop}\label{prop:disRforPartition}
Let $X$ and $Y$ be arbitrary metric spaces, $D_X=\{X_i\}_{i\in I}$, $D_Y=\{Y_i\}_{i\in I}$ be some partitions of the spaces $X$ and $Y$, respectively, and $R=\cup_{i\in I}X_i\x Y_i\in\cR(X,Y)$. Then
\begin{multline*}
\dis R=\sup\bigl\{|X_iX_j|'-|Y_iY_j|,\,|Y_iY_j|'-|X_iX_j|:i,j\in I\bigr\}=\\
=\sup\bigl\{\diam D_X,\,\diam D_Y,\,|X_iX_j|'-|Y_iY_j|,\,|Y_iY_j|'-|X_iX_j|:i,j\in I,\,i\ne j\bigr\}\le\\ \le\max\bigl\{\diam D_X,\diam D_Y,\b(D_X)-\a(D_Y),\b(D_Y)-\a(D_X)\bigr\}.
\end{multline*}
In particular, if $R\in\cR^0(X,Y)$, then in the previous formula one can take $R_X$ and $R_Y$ from Proposition~$\ref{prop:decompose_inrreducible}$ instead of $D_X$ and $D_Y$.
\end{prop}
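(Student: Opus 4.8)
The plan is to unwind the definition of distortion directly, exploiting the product structure of the blocks $X_i\x Y_i$. Since $R=\cup_{i\in I}X_i\x Y_i$ with the $X_i$ (and the $Y_i$) pairwise disjoint, the blocks $X_i\x Y_i$ are pairwise disjoint and exhaust $R$; hence every admissible pair $(x,y),(x',y')\in R$ lies in a pair of blocks indexed by some $i,j\in I$. So I would first rewrite the supremum defining $\dis R$ as a supremum over pairs of blocks, $\dis R=\sup_{i,j\in I}\dis_{i,j}$, where $\dis_{i,j}$ denotes the supremum of $\bigl||xx'|-|yy'|\bigr|$ taken over $x\in X_i$, $x'\in X_j$, $y\in Y_i$, $y'\in Y_j$.

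The key observation is that inside a single block the $X$- and $Y$-coordinates vary \emph{independently}: membership $(x,y)\in X_i\x Y_i$ imposes no link between $x$ and $y$, because the block is a full Cartesian product. Thus the four points $x,x',y,y'$ range over $X_i,X_j,Y_i,Y_j$ with no constraint. Using $\bigl||a|-|b|\bigr|=\max(|a|-|b|,|b|-|a|)$ together with $\sup\max(f,g)=\max(\sup f,\sup g)$ and the elementary identity $\sup\bigl(|xx'|-|yy'|\bigr)=\sup|xx'|-\inf|yy'|$ valid for independent variables, I would obtain $\dis_{i,j}=\max\bigl(|X_iX_j|'-|Y_iY_j|,\,|Y_iY_j|'-|X_iX_j|\bigr)$, where $\sup|xx'|=|X_iX_j|'$, $\inf|yy'|=|Y_iY_j|$, and symmetrically for the $Y$-side. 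Taking the supremum over all $i,j\in I$ yields the first displayed equality.

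For the second equality I would isolate the diagonal terms $i=j$. Taking $a=b$ gives $|X_iX_i|=0$ and $|Y_iY_i|=0$, while $|X_iX_i|'=\diam X_i$ and $|Y_iY_i|'=\diam Y_i$; hence the diagonal contributes exactly $\diam X_i$ and $\diam Y_i$, whose suprema over $i$ are $\diam D_X$ and $\diam D_Y$. Splitting the supremum into its diagonal ($i=j$) and off-diagonal ($i\ne j$) parts then gives the asserted form. The concluding inequality is immediate by applying, termwise for $i\ne j$, the monotonicity estimates $|X_iX_j|'\le\b(D_X)$ and $|Y_iY_j|\ge\a(D_Y)$ (and their $X\leftrightarrow Y$ symmetric counterparts), so that each off-diagonal term is bounded by $\b(D_X)-\a(D_Y)$ or $\b(D_Y)-\a(D_X)$. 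Finally, the ``in particular'' claim follows by applying the statement to the partitions $R_X,R_Y$ produced by Proposition~\ref{prop:decompose_inrreducible}.

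I do not expect a genuine obstacle; the single point that must be handled with care is the independence of the $X$- and $Y$-coordinates within a block, which is precisely what licenses the splitting $\sup(|xx'|-|yy'|)=\sup|xx'|-\inf|yy'|$ and distinguishes a product-type correspondence from an arbitrary relation. One should also keep $\diam=+\infty$ admissible, but since every quantity involved is a supremum or infimum in $[0,+\infty]$, all the manipulations remain valid.
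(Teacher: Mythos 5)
Your proof is correct and is exactly the direct computation from the definition of distortion that the paper has in mind: the paper offers no written proof at all, merely remarking that the result ``can be easily obtained from the definition of distortion,'' and your block-by-block decomposition with the independence observation $\sup\bigl(|xx'|-|yy'|\bigr)=\sup|xx'|-\inf|yy'|$ is the standard way to carry that out. The handling of the diagonal terms $i=j$ and of the possibly infinite suprema is also correct, so nothing is missing.
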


For a set $X$ and any $n\in\N$ by $\cD_n(X)$ we denote  the family of all partitions of the set $X$ into $n$ nonempty subsets. Notice that for $n>\#X$ we have $\cD_n(X)=\0$, and for $n=\#X$ the family $\cD_n(X)$ consists of the unique partition of $X$ into its one-element subsets.

Let $X$ be an arbitrary metric space. The next characteristic of $X$ will be used below in the proof of our main results:
$$
d_n(X)=
\begin{cases}
\inf\bigl\{\diam D:D\in\cD_n(X)\bigr\},&\text{if $\cD_n(X)\ne\0$},\\
\infty,&\text{if $\cD_n(X)=\0$}.
\end{cases}
$$

\begin{rk}
If $X$ is a finite metric space and $n=\#X$, then $d_n(X)=0$.
\end{rk}

\begin{rk}\label{rk:dm-monotonicity}
The function $g(n)=d_n(X)$ decreases monotonically on the set of those $n$ for which $\cD_n(X)\ne\0$.
\end{rk}

\subsection{Optimal Irreducible Correspondences}

In was proved in~\cite{IvaTuzIrreducible} that for compact metric spaces $X$ and $Y$ there always exists an optimal irreducible correspondence $R$. By $\cR_\opt^0(X,Y)$  we denote the set of all irreducible optimal cor\-res\-pon\-den\-ces between $X$ and $Y$. Thus, the following result holds.

\begin{prop}[\cite{IvaTuzIrreducible}]\label{prop:opt-irreducible}
Let $X$ and $Y$ be arbitrary compact metric spaces, then $\cR_\opt^0(X,Y)\ne\0$.
\end{prop}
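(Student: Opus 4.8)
The plan is to reduce the statement to the two existence results already in hand and to a single monotonicity observation. By the proposition immediately preceding it, for compact $X$ and $Y$ the set $\cR_\opt(X,Y)$ is nonempty, and by the result of~\cite{IvaTuzIrreducible} recalled above every correspondence contains an irreducible one. So I would like to start from an optimal correspondence and shrink it to an irreducible one without destroying optimality.

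First I would choose any $R\in\cR_\opt(X,Y)$, so that $\dis R=2\,d_{GH}(X,Y)$. Applying the cited result to $R$, I obtain an irreducible correspondence $R_0\in\cR^0(X,Y)$ with $R_0\ss R$. The key step is then to check that $R_0$ is still optimal. Since the supremum in the definition of distortion for $R_0$ ranges over the pairs $(x,y),(x',y')\in R_0$, which form a subset of the pairs available for $R$, monotonicity of the supremum gives $\dis R_0\le\dis R$. On the other hand $R_0$ is itself a correspondence, so the infimum formula $d_{GH}(X,Y)=\frac12\inf\{\dis R':R'\in\cR(X,Y)\}$ yields $\dis R_0\ge 2\,d_{GH}(X,Y)=\dis R$. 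Combining the two bounds gives $\dis R_0=\dis R$, so $R_0$ is optimal, and therefore $R_0\in\cR_\opt^0(X,Y)$, which is exactly what we want.

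The argument has essentially no hard step of its own: all the real work sits in the two external inputs, namely the existence of optimal correspondences in the compact setting and the fact that every correspondence contains an irreducible one (proved in~\cite{IvaTuzIrreducible}). Once these are granted, the only thing to verify is the elementary inequality $\dis R_0\le\dis R$ under the inclusion $R_0\ss R$, which is immediate from the definition of distortion as a supremum over pairs of points. The one point that deserves a little care is simply to invoke the \emph{compact} versions of both auxiliary results, since it is compactness that guarantees $\cR_\opt(X,Y)\ne\0$ in the first place; for merely bounded spaces an optimal correspondence need not exist, and the scheme above would have to be replaced by an infimizing argument.
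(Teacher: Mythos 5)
Your argument is correct and is the standard derivation: the paper itself gives no proof of this proposition (it defers to the cited reference \cite{IvaTuzIrreducible}), but your route --- take $R\in\cR_\opt(X,Y)$, extract an irreducible $R_0\ss R$, and observe that $\dis R_0\le\dis R$ because the distortion is a supremum over pairs of elements of the relation, while $\dis R_0\ge 2d_{GH}(X,Y)=\dis R$ because $R_0$ is itself a correspondence --- is exactly the intended combination of the two ingredients already recorded in the text. Your closing caveat about needing the compact versions of both inputs is also well placed.
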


\begin{cor}\label{cor:dGH-for-irreducible-corr}
Let $X$ and $Y$ be arbitrary compact metric spaces, $R\in\cR^0_\opt(X,Y)$, $R_X=\{X_i\}_{i\in I}$, $R_Y=\{Y_i\}_{i\in I}$, $R=\cup_{i\in I}X_i\x Y_i$. Then
$$
2d_{GH}(X,Y)=\sup\bigl\{\diam R_X,\,\diam R_Y,\,|X_iX_j|'-|Y_iY_j|,\,|Y_iY_j|'-|X_iX_j|:i,j\in I,\,i\ne j\bigr\}.
$$
\end{cor}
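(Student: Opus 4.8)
The plan is to obtain the statement as a direct synthesis of three facts already established, so that essentially no new argument is needed beyond careful bookkeeping of notation.

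First I would use the hypothesis $R\in\cR^0_\opt(X,Y)$ together with the definition of an optimal correspondence: since an optimal correspondence realizes the infimum defining $d_{GH}$, we have $2d_{GH}(X,Y)=\dis R$. This reduces the whole claim to evaluating $\dis R$ for the particular $R$ at hand. (Its existence is not in question, being guaranteed by Proposition~\ref{prop:opt-irreducible}.)

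Next, since $R$ is irreducible, Proposition~\ref{prop:decompose_inrreducible} furnishes the partitions $R_X=\{X_i\}_{i\in I}$ and $R_Y=\{Y_i\}_{i\in I}$ with $R=\cup_{i\in I}X_i\x Y_i$, which is exactly the decomposition appearing in the statement. I would then apply Proposition~\ref{prop:disRforPartition} to this decomposition, taking $D_X=R_X$ and $D_Y=R_Y$; this is precisely the situation covered by the ``in particular'' clause of that proposition. Its middle (equality) expression, after identifying $\diam D_X=\diam R_X$ and $\diam D_Y=\diam R_Y$, yields
$$
\dis R=\sup\bigl\{\diam R_X,\,\diam R_Y,\,|X_iX_j|'-|Y_iY_j|,\,|Y_iY_j|'-|X_iX_j|:i,j\in I,\,i\ne j\bigr\}.
$$
Combining this with $2d_{GH}(X,Y)=\dis R$ gives the asserted formula.

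The one place deserving attention --- the nearest thing to an obstacle in an otherwise immediate deduction --- is choosing the right member of the chain in Proposition~\ref{prop:disRforPartition}. Its first expression ranges over all $i,j$ and silently absorbs the partition diameters into the terms $|X_iX_j|'-|Y_iY_j|$ (via the case $i=j$), whereas its last expression is only an upper bound phrased through $\a$ and $\b$. The corollary requires the middle, equality form with $i\ne j$, so I would be careful to cite that exact expression rather than the bound or the unreduced supremum.
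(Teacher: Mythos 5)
Your proposal is correct and is exactly the derivation the paper intends: the corollary is stated without a written proof precisely because it is the immediate combination of $2d_{GH}(X,Y)=\dis R$ for optimal $R$, the decomposition of irreducible correspondences from Proposition~\ref{prop:decompose_inrreducible}, and the middle (equality) expression of Proposition~\ref{prop:disRforPartition}. Your closing remark about citing the equality form with $i\ne j$ rather than the upper bound is the right point of care, and nothing further is needed.
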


\subsection{Distances to Simplexes}

A metric space $X$ we call a \emph{simplex}, if all its nonzero distances are equal to each other. Notice that a simplex $X$ is compact, iff it consists of a finite number of points. By $\D_n$  we denote the simplex consisting of $n$ points on the distance $1$ from each other. Then for $t>0$ the metric space $t\,\D_n$ is a simplex, whose points are on the distance $t$ apart from each other. Notice that $\D_1$ is the single-point metric space, and that $t\,\D_1=\D_1$ for all $t>0$. In what follows, we put $\D_n=\{1,\ldots,n\}$ for convenience.

For any metric space $X$, $n\le\#X$, and $D=\{X_1,\ldots,X_n\}\in\cD_n(X)$ we put $R_D=\sqcup\,\bigl(\{i\}\x X_i\bigr)\in\cR(t\,\D_n,X)$. Let us note that if $D'\in\cD_n(X)$ differs from $D$ by a renumbering of its elements, then $\dis R_D=\dis R_{D'}$.

\begin{prop}[\cite{IvaTuzGeometryGHandSimplexes}]\label{prop:disRD}
Let $X$ be an arbitrary metric space and $n\in\N$, $n\le\#X$. Then for any $t>0$ and $D\in\cD_n(X)$ we have
$$
\dis R_D=\max\{\diam D,\,t-\a(D),\,\b(D)-t\}.
$$
\end{prop}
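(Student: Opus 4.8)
The plan is to reduce the statement to Proposition~\ref{prop:disRforPartition}, which already evaluates the distortion of a correspondence glued from two partitions. Observe that $R_D$ is exactly the correspondence associated with the following pair of partitions: the partition $D_X=\bigl\{\{i\}\bigr\}_{i=1}^n$ of $t\,\D_n$ into one-point sets, and the given partition $D=\{X_i\}_{i=1}^n$ of $X$, both indexed by $I=\{1,\dots,n\}$. Since the points of $t\,\D_n$ are pairwise at distance $t$, for the first partition we have $\diam D_X=0$ and $|\{i\}\{j\}|=|\{i\}\{j\}|'=t$ for all $i\ne j$.

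Substituting these values into the formula of Proposition~\ref{prop:disRforPartition}, applied with first space $t\,\D_n$ and second space $X$, I would obtain
$$
\dis R_D=\max\bigl\{0,\ \diam D,\ t-|X_iX_j|,\ |X_iX_j|'-t:\ i\ne j\bigr\}.
$$
Taking the suprema over $i\ne j$ turns $\sup_{i\ne j}\bigl(t-|X_iX_j|\bigr)$ into $t-\a(D)$ and $\sup_{i\ne j}\bigl(|X_iX_j|'-t\bigr)$ into $\b(D)-t$, which yields the claimed equality $\dis R_D=\max\{\diam D,\,t-\a(D),\,\b(D)-t\}$.

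Alternatively, and to keep the argument self-contained, I would compute $\dis R_D$ straight from the definition. Every element of $R_D$ is a pair $(i,x)$ with $x\in X_i$, so for two such elements $(i,x)$, $(j,x')$ the corresponding term is $\bigl||ij|-|xx'|\bigr|$, where $|ij|$ is the distance in $t\,\D_n$, namely $0$ if $i=j$ and $t$ if $i\ne j$. The diagonal part $i=j$ contributes $\sup_i\sup_{x,x'\in X_i}|xx'|=\diam D$, while for $i\ne j$ one must maximise $\bigl|t-|xx'|\bigr|$ over $x\in X_i$, $x'\in X_j$.

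The one point deserving care is this off-diagonal maximisation. Since $s\mapsto|t-s|$ is convex, its supremum over the admissible values of $|xx'|$ equals $\max\{|t-\a(D)|,|t-\b(D)|\}$, where $\a(D)$ and $\b(D)$ are the infimum and supremum of those values (the values lie in $[\a(D),\b(D)]$, so the upper bound follows by convexity, and both endpoints are approached, giving the lower bound by continuity). From $\a(D)\le\b(D)$ the two conditions $t<\a(D)$ and $\b(D)<t$ cannot hold together, so at least one of $t-\a(D)$, $\b(D)-t$ is nonnegative and this maximum coincides with $\max\{t-\a(D),\,\b(D)-t\}$, a negative term being harmlessly dominated. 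Finally, for $n=1$ the off-diagonal part is empty and the formula collapses to $\dis R_D=\diam D$ under the conventions $\inf\0=+\infty$, $\sup\0=-\infty$.
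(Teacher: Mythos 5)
Your argument is correct. Note that the paper itself gives no proof of this proposition --- it is imported from the reference \cite{IvaTuzGeometryGHandSimplexes} --- so there is nothing to compare against; both of your routes (the reduction to Proposition~\ref{prop:disRforPartition} with the singleton partition of $t\,\D_n$, and the direct computation splitting $\dis R_D$ into the diagonal part $i=j$, which yields $\diam D$, and the off-diagonal part, where $\sup\bigl|t-|xx'|\bigr|=\max\{t-\a(D),\,\b(D)-t\}$) are sound, and your care about the sign of the off-diagonal terms and the degenerate case $n=1$ is exactly the right bookkeeping.
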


\begin{prop}[\cite{IvaTuzGeometryGHandSimplexes}]\label{prop:m_less_than_n_corresp}
Let $X$ be a compact metric space. Then for each $n\in\N$, $n\le\#X$, and $t>0$ there exists some $R\in\cR_{\opt}(t\,\D_n,X)$ such that the family $\bigl\{R(i)\bigr\}$ is a partition of the space $X$. In particular, if $n=\#X$, then this $R$ can be chosen among bijections.
\end{prop}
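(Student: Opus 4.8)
The plan is to produce an explicit partition-type correspondence that is optimal, starting from an arbitrary optimal irreducible one. First I would invoke Proposition~\ref{prop:opt-irreducible} to pick $R\in\cR^0_\opt(t\,\D_n,X)$, and Proposition~\ref{prop:decompose_inrreducible} to write $R=\cup_{j\in J}A_j\x B_j$, where $\{A_j\}_{j\in J}$ and $\{B_j\}_{j\in J}$ are partitions of $t\,\D_n$ and $X$, respectively. Since $\{A_j\}$ partitions the $n$-point set $\D_n$, we have $\#J\le n$; and the family $\{R(i)\}$ is a partition precisely when every $A_j$ is a singleton, that is, when $\#J=n$ and $R=R_D$ for $D=\{B_j\}\in\cD_n(X)$. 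So the goal becomes to replace $R$, without increasing distortion, by a correspondence of the form $R_D$ with $D\in\cD_n(X)$.

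The key observation is that in the simplex $t\,\D_n$ all distinct points lie at distance $t$, so $\diam\{A_j\}$ equals $t$ if some $A_j$ is non-singleton and $0$ otherwise, while $|A_iA_j|=|A_iA_j|'=t$ for $i\ne j$. I would therefore split the argument according to the size of $\dis R=2d_{GH}(t\,\D_n,X)$ relative to $t$. If $\dis R<t$, then the bound $\dis R\ge\diam\{A_j\}$ coming from Corollary~\ref{cor:dGH-for-irreducible-corr} forces $\diam\{A_j\}=0$, hence every $A_j$ is a singleton and $R$ is already of the required form $R_{\{B_j\}}$ with $\{B_j\}\in\cD_n(X)$.

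If instead $\dis R\ge t$, I would refine the partition $\{B_j\}$ of $X$ to some $D\in\cD_n(X)$ with exactly $n$ blocks; this is possible because $\#J\le n\le\#X$, so a partition of $X$ into $\#J$ blocks can always be subdivided into exactly $n$ nonempty ones (the maximal number of blocks reachable by refinement is $\#X\ge n$, and refinement raises the block count one step at a time). By Proposition~\ref{prop:disRD}, $\dis R_D=\max\{\diam D,\,t-\a(D),\,\b(D)-t\}$, and I would bound each term separately: $\diam D\le\diam\{B_j\}\le\dis R$ since every block of $D$ sits inside some $B_j$; $t-\a(D)\le t\le\dis R$ since $\a(D)\ge0$; and $\b(D)-t\le\dis R$ because two distinct blocks of $D$ lie in blocks $B_k,B_l$ with $|B_kB_l|'\le t+\dis R$, using the inequalities $|B_iB_j|'-t\le\dis R$ and $\diam\{B_j\}\le\dis R$ of Corollary~\ref{cor:dGH-for-irreducible-corr}. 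Thus $\dis R_D\le\dis R$, and since $R_D$ is a correspondence the reverse inequality $\dis R_D\ge 2d_{GH}(t\,\D_n,X)=\dis R$ holds, so $R_D$ is optimal.

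The main subtlety, and the reason a direct refinement does not work in general, is the constraint $t-\a(D)\le\dis R$: subdividing $\{B_j\}$ can create new blocks that are arbitrarily close, collapsing $\a(D)$ towards $0$ and potentially violating this bound. The case split resolves exactly this difficulty: when $\dis R\ge t$ the constraint on $\a(D)$ becomes vacuous (as $t-\a(D)\le t\le\dis R$), and when $\dis R<t$ no refinement is needed at all, the simplex side already consisting of singletons. Finally, for the ``in particular'' claim, when $n=\#X$ the family $\cD_n(X)$ contains only the partition of $X$ into one-point subsets, so the resulting $R_D$ is a bijection in both cases.
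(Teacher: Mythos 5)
The paper does not prove this proposition: it is imported verbatim from \cite{IvaTuzGeometryGHandSimplexes}, so there is no internal proof to compare yours against. Judged on its own, your argument is correct and is built entirely from statements available in the paper. The decomposition of an irreducible optimal $R$ via Proposition~\ref{prop:decompose_inrreducible}, the observation that $\{R(i)\}$ is a partition exactly when all blocks $A_j$ on the simplex side are singletons, and the dichotomy on $\dis R$ versus $t$ all check out: when $\dis R<t$, Corollary~\ref{cor:dGH-for-irreducible-corr} gives $\dis R\ge\diam\{A_j\}$, and a non-singleton block in $t\,\D_n$ would force $\diam\{A_j\}=t$, so $R$ is already of the form $R_D$; when $\dis R\ge t$, your three bounds $\diam D\le\diam\{B_j\}\le\dis R$, $t-\a(D)\le t\le\dis R$, and $\b(D)-t\le\dis R$ (the last via $|B_kB_l|'\le|A_kA_l|+\dis R=t+\dis R$ for distinct blocks and $\diam\{B_j\}\le\dis R$ for blocks refined from a common $B_k$) combine with Proposition~\ref{prop:disRD} and the trivial lower bound $\dis R_D\ge 2d_{GH}(t\,\D_n,X)$ to give optimality of $R_D$. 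You also correctly identify why the case split is needed rather than a blind refinement: subdividing $\{B_j\}$ can destroy control of $\a(D)$, and that term is only harmless once $\dis R\ge t$. The refinement to exactly $n$ blocks is legitimate since $\#J\le n\le\#X$, and the ``in particular'' clause follows because $\cD_{\#X}(X)$ contains only the singleton partition. This is a clean, complete substitute for the external citation.
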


The next result follows from Propositions~\ref{prop:disRD} and~\ref{prop:m_less_than_n_corresp}.

\begin{cor}\label{cor:GH-dist-alpha-beta}
Let $X$ be a compact metric space and $n\in\N$, $n\le\#X$. Then for any $t>0$ we have
$$
2d_{GH}(t\,\D_n,X)=\inf\Bigl\{\max\bigl(\diam D,\,t-\a(D),\,\b(D)-t\bigr):D\in\cD_n(X)\Bigr\}.
$$
\end{cor}

\begin{prop}[\cite{IvaTuzGeometryGHandSimplexes}]\label{prop:dist-n-simplex-finite}
Let $X$ be a finite metric space, $m=\#X$, $n\in\N$, $t>0$. Denote by $a\le b$ the first and the second smallest distances between different points of the space $X$ \(if they are defined\/\). Then
$$
2d_{GH}(t\,\D_n,X)=
\begin{cases}
\max\{t,\,\diam X-t\}&\text{for $m<n$},\\
\max\{t-a,\,\diam X-t\}&\text{for $m=n\ge2$},\\
\max\{a,\,t-b,\,\diam X-t\}&\text{for $m=n+1\ge3$},\\
\max\{d_n(X),\,\diam X-t\}&\text{for $m\ge n$ and $\diam X\ge2t$}.
\end{cases}
$$
Moreover, for $m=n+1$ there exists an optimal correspondence sending some point of the simplex to a pair of the closest points of $X$, and forming a bijection between the remaining points.
\end{prop}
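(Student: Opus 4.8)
The plan is to reduce everything to the variational formula of Corollary~\ref{cor:GH-dist-alpha-beta}, which applies whenever $n\le m$, and to treat the case $m<n$ by a direct argument. For $n\le m$ that corollary gives
$$
2d_{GH}(t\,\D_n,X)=\inf\bigl\{\max(\diam D,\,t-\a(D),\,\b(D)-t):D\in\cD_n(X)\bigr\},
$$
and since $X$ is finite the family $\cD_n(X)$ is finite, so the infimum is attained and I may argue over a single optimal partition. The whole proof then reduces to the book-keeping of which partitions of an $m$-point set into $n$ blocks are available, together with the observation that $\diam D$, $\a(D)$, $\b(D)$ are controlled by the first and second smallest distances $a,b$ and by $\diam X$.

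For $m<n$ I cannot use the corollary, so I argue directly. For the lower bound, in any correspondence $R\in\cR(t\,\D_n,X)$ the $n$ nonempty images $R(i)\ss X$ cannot be pairwise disjoint (pigeonhole, since $n>m$), hence $(i,x),(i',x)\in R$ for some $i\ne i'$ and some $x$, giving $\dis R\ge t$; combined with $2d_{GH}\ge|\diam(t\,\D_n)-\diam X|$ from item~\refitem{prop:GH_simple:2} of Proposition~\ref{prop:GH_simple} this yields $2d_{GH}\ge\max\{t,\diam X-t\}$. For the upper bound I take the correspondence induced by the partition of $X$ into singletons against the partition of the simplex that merges the extra $n-m$ points into one block; by Proposition~\ref{prop:disRforPartition} its distortion equals $\max\{t,\diam X-t\}$, closing this case. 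The case $m=n\ge2$ is immediate, since $\cD_n(X)$ is the single partition into singletons, so $\diam D=0$, $\a(D)=a$, $\b(D)=\diam X$, and the formula returns $\max\{t-a,\diam X-t\}$.

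The case $m=n+1\ge3$ is the heart of the matter, and the key combinatorial fact is that every $D\in\cD_n(X)$ consists of exactly one two-point block $\{x,y\}$ and $n-1$ singletons, so $\diam D=|xy|$ and only that single pair is hidden inside a block. Taking $D^{\ast}$ whose two-point block is a closest pair $\{p,q\}$ (so $|pq|=a$) gives $\diam D^{\ast}=a$, $\b(D^{\ast})=\diam X$, and $\a(D^{\ast})=b$ (removing the unique minimal pair leaves the second smallest distance as the new minimum over distinct blocks), hence the candidate value $\max\{a,\,t-b,\,\diam X-t\}$; the associated correspondence already proves the \emph{moreover} assertion. For optimality I show this value is a lower bound for $\max(\diam D,t-\a(D),\b(D)-t)$ over all $D$. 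First, for the diameter-realizing pair of $X$ either it sits in one block (forcing $\diam D\ge\diam X$) or in distinct blocks (forcing $\b(D)\ge\diam X$), so $\max\{\diam D,\b(D)-t\}\ge\diam X-t$. Second, if one had simultaneously $\diam D<t-b$ and $\a(D)>b$, then the two distinct pairs realizing $a$ and $b$ would both have to lie inside blocks, impossible since $D$ has only one non-singleton block; hence $\max\{\diam D,t-\a(D)\}\ge\max\{a,t-b\}$. These two inequalities give the matching lower bound.

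Finally, for $m\ge n$ with $\diam X\ge2t$ the lower bound $2d_{GH}\ge\max\{d_n(X),\diam X-t\}$ holds with no extra hypothesis, since $\max(\diam D,\dots)\ge\diam D\ge d_n(X)$ and, by the diameter-pair argument above, $\ge\diam X-t$. For the upper bound I choose $D$ realizing $d_n(X)=\diam D$; then $\b(D)-t\le\diam X-t$, and, crucially, the hypothesis $\diam X\ge2t$ forces $t-\a(D)\le t\le\diam X-t$, so the $\a$-term is dominated and $\max(\diam D,t-\a(D),\b(D)-t)\le\max\{d_n(X),\diam X-t\}$. I expect the genuine obstacle to be the $m=n+1$ analysis, namely verifying that the closest-pair partition simultaneously optimizes all three quantities $\diam D$, $t-\a(D)$, $\b(D)-t$; the single-two-point-block counting argument that controls $t-\a(D)$ through the second smallest distance $b$ is the delicate point, whereas the remaining cases reduce to routine substitution into Corollary~\ref{cor:GH-dist-alpha-beta}.
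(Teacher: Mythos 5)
The paper itself gives no proof of Proposition~\ref{prop:dist-n-simplex-finite}: it is imported verbatim from \cite{IvaTuzGeometryGHandSimplexes}, so there is nothing internal to compare your argument against. Judged on its own, your derivation from Corollary~\ref{cor:GH-dist-alpha-beta} and Proposition~\ref{prop:disRforPartition} is correct and complete in all four cases. The delicate step you flagged in the case $m=n+1$ does go through: since every $D\in\cD_n(X)$ has exactly one two-element block, at most one pair of points of $X$ can be hidden inside a block, so $\a(D)\le b$ and $\diam D\ge a$ hold for \emph{every} $D$ (not merely under the auxiliary hypothesis $\diam D<t-b$ you phrase the contradiction with), which is exactly the uniform lower bound you need; the closest-pair partition then attains it, and the associated correspondence gives the ``moreover'' clause. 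Two cosmetic points worth making explicit: in the case $m=n\ge2$ the value produced by Corollary~\ref{cor:GH-dist-alpha-beta} is $\max\{0,\,t-a,\,\diam X-t\}$, and dropping the $0$ requires the observation that $a\le\diam X$ forces $\max\{t-a,\diam X-t\}\ge0$; and in the case $m<n$ your upper-bound correspondence is not of the form covered by Corollary~\ref{cor:GH-dist-alpha-beta} (which needs $n\le\#X$), but Proposition~\ref{prop:disRforPartition} applies directly to the partition pair you describe and yields $\max\{t,\diam X-t\}$ as claimed.
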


Proposition~\ref{prop:dist-n-simplex-finite} implies an explicit formula for the Gromov--Hausdorff distance between simplexes.

\begin{cor}\label{cor:simplexes-distance}
For integer $p,q\ge2$ and real $t,s>0$ we have
$$
2d_{GH}(t\,\D_p,s\,\D_q)=
\begin{cases}
|t-s|& \text{for $p=q$},\\
\max\{t,s-t\}& \text{for $p>q$},\\
\max\{s,t-s\}& \text{for $p<q$}.
\end{cases}
$$
In particular, if $p\ne q$, then $2d_{GH}(t\,\D_p,s\,\D_q)\ge\min\{t,s\}$.
\end{cor}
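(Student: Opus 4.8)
The plan is to obtain the formula as a direct specialization of Proposition~\ref{prop:dist-n-simplex-finite}, taking the finite metric space there to be $X = s\D_q$ and the parameter $n$ to be $p$. First I would record the relevant invariants of $X = s\D_q$: it has $m = \#X = q$ points and diameter $\diam X = s$, and, since all its nonzero distances equal $s$, its smallest pairwise distance is $a = s$ (the second smallest distance $b$, defined only when $q \ge 3$, is also $s$, but it will be irrelevant). With these substitutions the whole computation reduces to selecting which branch of Proposition~\ref{prop:dist-n-simplex-finite} applies, and this is decided purely by comparing $m = q$ with $n = p$.

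For $p = q$ we have $m = n = q \ge 2$, so the second branch gives $2d_{GH}(t\D_p, s\D_q) = \max\{t - a,\, \diam X - t\} = \max\{t - s,\, s - t\} = |t - s|$. For $p > q$ we have $m = q < p = n$, so the first branch gives $2d_{GH}(t\D_p, s\D_q) = \max\{t,\, \diam X - t\} = \max\{t,\, s - t\}$. In neither of these regimes is the hypothesis $\diam X \ge 2t$ or the $m = n+1$ branch ever invoked, so the possible undefinedness of $b$ for $q = 2$ causes no trouble.

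For the remaining case $p < q$ I would use that $d_{GH}$ is symmetric, writing $d_{GH}(t\D_p, s\D_q) = d_{GH}(s\D_q, t\D_p)$ and applying the $p > q$ formula just established with the roles of $(t,p)$ and $(s,q)$ interchanged (now $q > p$); this yields $2d_{GH}(s\D_q, t\D_p) = \max\{s,\, t - s\}$, which is exactly the third branch. Finally, for the concluding inequality when $p \ne q$, I would note that $\max\{t,\, s - t\} \ge t \ge \min\{t,s\}$ and likewise $\max\{s,\, t - s\} \ge s \ge \min\{t,s\}$, so in both regimes $2d_{GH}(t\D_p, s\D_q) \ge \min\{t, s\}$. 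I expect no real difficulty beyond careful bookkeeping: correctly matching each regime $p = q$, $p > q$, $p < q$ to the appropriate branch of Proposition~\ref{prop:dist-n-simplex-finite} and identifying the simplex's single repeated distance value simultaneously as $a$ and as the diameter; once that is pinned down, the statement follows by direct substitution.
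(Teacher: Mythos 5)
Your proposal is correct and is essentially the paper's own (unwritten) argument: the corollary is stated as an immediate specialization of Proposition~\ref{prop:dist-n-simplex-finite} to $X=s\,\D_q$, and your branch-by-branch substitution with $a=\diam X=s$, together with the symmetry of $d_{GH}$ to handle $p<q$, is exactly the intended derivation. The concluding inequality also follows as you say.
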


\begin{prop}\label{prop:GH-more-than-simplex}
Let $X$ be a metric space containing a subspace isometric to $t\,\D_n$, $n\ge2$, and suppose that $M$ is a finite metric space, $\#M\le n-1$. Then $2d_{GH}(X,M)\ge t$. If $\diam X=t$ and $\diam M\le t$, then $2d_{GH}(X,M)=t$.
\end{prop}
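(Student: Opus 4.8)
The plan is to bound the distortion of an arbitrary correspondence from below and then invoke the formula $2d_{GH}(X,M)=\inf\bigl\{\dis R:R\in\cR(X,M)\bigr\}$, which holds for arbitrary metric spaces and hence applies even though $X$ need not be compact. Fix a subspace $\{x_1,\dots,x_n\}\ss X$ isometric to $t\,\D_n$, so that $|x_ix_j|=t$ for all $i\ne j$. The heart of the argument is the claim that every $R\in\cR(X,M)$ satisfies $\dis R\ge t$.

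To prove this claim I would run a pigeonhole argument on the $n$ points $x_1,\dots,x_n$. Since $R$ is a correspondence, each set $R(x_i)$ is nonempty, so one may choose $m_i\in R(x_i)$ for every $i$. The assignment $i\mapsto m_i$ sends the $n$-element index set into $M$, and the hypothesis $\#M\le n-1<n$ forces it to be non-injective: there are indices $i\ne j$ with $m_i=m_j=:m$. Then $(x_i,m)$ and $(x_j,m)$ both lie in $R$, so by the definition of distortion $\dis R\ge\bigl||x_ix_j|-|mm|\bigr|=|t-0|=t$. As this bound holds for every correspondence, taking the infimum gives $2d_{GH}(X,M)\ge t$, which is the first assertion.

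For the second assertion it remains only to establish the matching upper bound under the hypotheses $\diam X=t$ and $\diam M\le t$. This is immediate from Proposition~\ref{prop:GH_simple}\refitem{prop:GH_simple:3}, which yields $2d_{GH}(X,M)\le\max\{\diam X,\diam M\}=t$; combining this with the lower bound gives $2d_{GH}(X,M)=t$. I do not expect a genuine obstacle here: the argument is short and elementary. The only points that require a little care are to apply the pigeonhole to a \emph{selection} $m_i\in R(x_i)$ (rather than to the full image sets $R(x_i)$, which may overlap in more complicated ways), and to note that both the correspondence formula for $d_{GH}$ and the diameter estimate of Proposition~\ref{prop:GH_simple} are valid with no compactness assumption on $X$, so that the result applies to the general (possibly infinite) metric space $X$ containing the simplex.
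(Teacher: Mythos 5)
Your proof is correct and follows essentially the same route as the paper: a pigeonhole argument on the $n$ simplex points forces two of them to share an image point $p\in M$ under any correspondence $R$, giving $\dis R\ge t$, and the upper bound comes from Item~(\ref{prop:GH_simple:3}) of Proposition~\ref{prop:GH_simple}. Your extra care about choosing a selection $m_i\in R(x_i)$ just makes explicit what the paper leaves implicit.
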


\begin{proof}
Indeed, denote by $C=\{c_1,\ldots,c_n\}$ a subspace of $X$ isometric to $t\,\D_n$, then for any $R\in\cR(X,M)$ there exists $p\in M$ and distinct $c_i,\,c_j$ such that $(c_i,p),\,(c_j,p)\in R$, so $\dis R\ge t$. Since $R$ is an arbitrary correspondence, then $d_{GH}(X,M)\ge t$. If $\diam X=t$ and $\diam M\le t$, then Item~(\ref{prop:GH_simple:3}) of Proposition~\ref{prop:GH_simple} implies that $2d_{GH}(X,M)\le t$.
\end{proof}

\section{Isometries of Metric Spaces}
\markright{\thesection.~Isometries of Metric Spaces}

In this section we work out some technique suitable for description of metric spaces isometries. The main attention we pay to self-isometries.

\subsection{Operations with Invariant Subsets}
Let $X$ be a metric space and $f\:X\to X$ be an isometry. By $\cP^f(X)$ we denote  the set of all subsets of $X$ invariant with respect to $f$, namely, $\cP^f(X)=\bigl\{A\ss X:f(A)=A\bigr\}$. The next statement is evident.

\begin{prop}\label{prop:general_prop_isom}
The family $\cP^f(X)$ contains $X$, $\0$, and it is invariant under the operations of union, intersection, and taking complement. Besides that, if $A\in\cP^f(X)$, then for any $r>0$ it holds $U_r(A)\in\cP^f(X)$, and for any $r\ge0$ we have $B_r(A),S_r(A)\in\cP^f(X)$.
\end{prop}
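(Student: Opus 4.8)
The plan is to verify each closure property directly from the definition $\cP^f(X)=\{A\ss X:f(A)=A\}$, using only that $f$ is a bijective isometry. The statement bundles together several claims, so I would dispatch them one at a time. The set-theoretic claims come first, then the metric ones.

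First I would record that $X,\0\in\cP^f(X)$, since $f(X)=X$ (as $f$ is a surjection) and $f(\0)=\0$ trivially. For the Boolean operations, the key fact is that $f$ is a bijection, so it commutes with unions, intersections, and complements: for any family $\{A_\a\}$ one has $f(\bigcup_\a A_\a)=\bigcup_\a f(A_\a)$, $f(\bigcap_\a A_\a)=\bigcap_\a f(A_\a)$, and $f(X\sm A)=X\sm f(A)$ (the last equality using injectivity for the inclusion $f(X\sm A)\ss X\sm f(A)$ and surjectivity for the reverse). Hence if every $A_\a\in\cP^f(X)$, i.e.\ $f(A_\a)=A_\a$, then applying $f$ to the union/intersection/complement reproduces the same set, giving membership in $\cP^f(X)$. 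Each of these is a one-line check once the commutation identity is in hand.

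For the metric claims, the unifying observation is that $f$ preserves distances, so it preserves the distance-to-a-set function: $|f(x)\,f(A)|=|xA|$ for every $x$ and every $A$, because $|f(x)f(a)|=|xa|$ for all $a\in A$ and $f$ maps $A$ bijectively onto $f(A)$. Now if $A\in\cP^f(X)$ so that $f(A)=A$, then $|f(x)A|=|f(x)f(A)|=|xA|$. From this single identity all three set-operations follow: $x\in U_r(A)\lra|xA|<r\lra|f(x)A|<r\lra f(x)\in U_r(A)$, which shows $f\bigl(U_r(A)\bigr)=U_r(A)$; replacing $<$ by $\le$ gives the same for $B_r(A)$, and by ``$=$'' for $S_r(A)$. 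In each case one must also use that $f$ is a bijection to upgrade the statement ``$x\in U_r(A)\lra f(x)\in U_r(A)$'' into the equality of sets $f\bigl(U_r(A)\bigr)=U_r(A)$, but this is immediate.

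I do not expect any genuine obstacle here; the statement is flagged as \emph{evident} in the paper, and the whole content is the bookkeeping observation that a bijective isometry commutes with Boolean operations and preserves the point-to-set distance. The only point requiring a small measure of care is the complement case, where one genuinely needs both injectivity and surjectivity of $f$ (injectivity alone gives $f(X\sm A)\ss f(X)\sm f(A)$, and one needs surjectivity to identify $f(X)$ with $X$); everywhere else bijectivity is used only to pass from a pointwise equivalence to an equality of image sets.
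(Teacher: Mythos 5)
Your proof is correct and is exactly the routine verification the paper has in mind when it declares this statement ``evident'' and omits the argument: bijectivity of $f$ handles the Boolean operations, and the identity $|f(x)\,A|=|f(x)\,f(A)|=|xA|$ for $f$-invariant $A$ handles the neighbourhoods, balls, and equidistant sets. No gaps; the only cosmetic remark is that injectivity is also what makes $f$ commute with intersections (not just with complements), but you invoke bijectivity throughout, so nothing is missing.
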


\subsection{Isometries of Finite Pointed Spaces}
A set $X$ we call \emph{pointed} if one of its elements is marked. More formally, a \emph{pointed set\/} is a pair $(X,x)$, where $x\in X$. For a pointed set $X$ by $p(X)$  we denote its marked point $x$. Two pointed metric spaces $X$ and $Y$ are called \emph{$p$-isometric} if there exists an isometry $f\:X\to Y$ such that $p(Y)=f\bigl(p(X)\bigr)$. For a pointed metric space $X$ by $\Gr(X)$ we denote  the class of all metric spaces that are $p$-isometric to $X$.  By $\cM_*$ we denote the set of the classes of $p$-isometric pointed compact metric spaces. Thus, if $X$ is a pointed compact metric space, then $\Gr(X)\in\cM_*$.

Let $X$ be an arbitrary metric space and $x\in X$. For any $n\in\N$ by $\cP_n(x)$ we denote the set of all pointed $n$-point subspaces $Z\ss X$ containing $x$ as a marked point, i.e., such that $p(Z)=x$. Also, we define $\cM_*(X,x,n)\ss\cM_*$ to be $\Gr\bigl(\cP_n(x)\bigr)$. The following statement is evident.

\begin{prop}\label{prop:punct_isom_and_GH}
Let $f\:X\to Y$ be an isometry of metric spaces, then for any $n\in\N$ and any point $x\in X$ we have $\cM_*(X,x,n)=\cM_*\bigl(Y,f(x),n\bigr)$. In particular, each isometry $f\:X\to X$ is invariant on the level sets of the mapping $x\mapsto\cM_*(X,x,n)$.
\end{prop}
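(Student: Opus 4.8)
The plan is to observe that an isometry $f\:X\to Y$ restricts to a natural bijection between the $n$-point pointed subspaces of $X$ marked at $x$ and those of $Y$ marked at $f(x)$, and that this bijection respects the $p$-isometry relation. Since $\cM_*(X,x,n)=\Gr\bigl(\cP_n(x)\bigr)$ by definition, it suffices to check that $f$ induces a map $\cP_n(x)\to\cP_n\bigl(f(x)\bigr)$ preserving $p$-isometry classes, and then to argue symmetrically for the reverse direction.

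First I would verify the forward inclusion $\cM_*(X,x,n)\ss\cM_*\bigl(Y,f(x),n\bigr)$. Take any $Z\in\cP_n(x)$, so that $Z$ is an $n$-point subspace of $X$ with marked point $p(Z)=x$. Since $f$ is injective, $f(Z)$ is an $n$-point subset of $Y$; marking it at $f(x)\in f(Z)$ produces an element of $\cP_n\bigl(f(x)\bigr)$. The restriction $f|_Z\:Z\to f(Z)$ is an isometry carrying the marked point $x$ to the marked point $f(x)$, so $Z$ and $f(Z)$ are $p$-isometric and $\Gr(Z)=\Gr\bigl(f(Z)\bigr)$. Hence every class in $\cM_*(X,x,n)$ also lies in $\cM_*\bigl(Y,f(x),n\bigr)$.

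Next I would obtain the reverse inclusion for free by symmetry: the map $f^{-1}\:Y\to X$ is again an isometry and satisfies $f^{-1}\bigl(f(x)\bigr)=x$, so applying the forward inclusion to $f^{-1}$ yields $\cM_*\bigl(Y,f(x),n\bigr)\ss\cM_*(X,x,n)$. Combining the two inclusions gives the asserted equality. For the concluding remark, I would specialize to $Y=X$: for a self-isometry $f\:X\to X$ the equality reads $\cM_*(X,x,n)=\cM_*\bigl(X,f(x),n\bigr)$, which says precisely that $x$ and $f(x)$ always lie in the same level set of the map $x\mapsto\cM_*(X,x,n)$; thus $f$ sends each level set into itself, and since $f$ is a bijection it maps each level set onto itself.

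There is no substantial obstacle here; the argument is a direct bookkeeping of definitions. The only two points demanding a little care are that injectivity of $f$ is exactly what guarantees $\#f(Z)=\#Z=n$, so that $f(Z)$ genuinely is an $n$-point space, and that the marked points must be tracked through the restriction so that $f|_Z$ qualifies as a $p$-isometry rather than merely an isometry of the underlying spaces.
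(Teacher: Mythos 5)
Your proof is correct and is precisely the routine verification the paper has in mind: the authors state this proposition without proof, calling it evident, and your argument (push forward pointed $n$-point subspaces via $f$, note the restriction is a $p$-isometry, get the reverse inclusion from $f^{-1}$) is exactly the bookkeeping they are omitting. No discrepancies.
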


A triple $\{A,B,C\}$ of different points of a metric space $X$ we call a \emph{triangle\/}, denote by $ABC$, and write $ABC\ss X$. For such triangles we use school geometry terminology.

\begin{prop}\label{prop:punct_triangles}
Let $P$ and $Q$ be distinct points of a metric space $X$. Suppose that for each triangle $PBC\ss X$ its side $BC$ cannot be the longest one, but among the triangles $QBC\ss X$ there exists one, whose longest  side is $BC$. Then $\cM_*(X,P,3)\ne\cM_*(X,Q,3)$.
\end{prop}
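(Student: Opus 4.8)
The plan is to isolate a property of pointed three-point spaces that is invariant under $p$-isometry, is witnessed by some triangle marked at $Q$, yet fails for every triangle marked at $P$. Recall that $\cM_*(X,P,3)=\Gr\bigl(\cP_3(P)\bigr)$ consists exactly of the $p$-isometry classes of the pointed subspaces $\{P,B,C\}$ with $P$ as the marked point, i.e.\ of the triangles $PBC\ss X$ distinguished at the vertex $P$; the same holds for $Q$. So it suffices to find a property of pointed triangles that (i) is preserved by $p$-isometries, hence descends to a well-defined property of classes in $\cM_*$, (ii) holds for no triangle $PBC$, and (iii) holds for at least one triangle $QBC$. Then $\cM_*(X,Q,3)$ will contain an element absent from $\cM_*(X,P,3)$, giving the desired inequality.

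First I would introduce the property. For a pointed three-point space $T$ with marked point $m$ and remaining points $u,v$, say that $T$ is \emph{$m$-obtuse} if the side opposite the marked point is strictly the longest, that is, $|uv|>\max\{|mu|,|mv|\}$. Any $p$-isometry carries the marked point to the marked point and preserves all pairwise distances; hence it carries the side opposite $m$ to the side opposite the image of $m$ and preserves the comparison of side lengths. Therefore $m$-obtuseness depends only on the $p$-isometry class and defines a genuine property of the elements of $\cM_*$.

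Next I would translate the two hypotheses into this language. The assumption that in every triangle $PBC$ the side $BC$ cannot be the longest means precisely $|BC|\le\max\{|PB|,|PC|\}$ for all such triangles; equivalently, no pointed triangle marked at $P$ is $m$-obtuse, so no element of $\cM_*(X,P,3)$ has the property. Conversely, the existence of a triangle $QBC$ whose longest side is $BC$ yields $|BC|>\max\{|QB|,|QC|\}$, i.e.\ a pointed triangle marked at $Q$ that is $m$-obtuse, whose class lies in $\cM_*(X,Q,3)$ and possesses the property. Hence $\cM_*(X,Q,3)$ contains an element that $\cM_*(X,P,3)$ does not, and $\cM_*(X,P,3)\ne\cM_*(X,Q,3)$.

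There is essentially no computation involved; the one point requiring care is the reading of the word \emph{longest}. The separation is clean exactly when ``$BC$ cannot be the longest'' is read as ``$BC$ is not the strict maximum among the three sides'' and ``whose longest side is $BC$'' as ``$BC$ is the strict maximum,'' so that the single invariant of strict $m$-obtuseness simultaneously excludes all $P$-triangles and is realized by some $Q$-triangle. The remaining verifications—that strict $m$-obtuseness is invariant under $p$-isometry and that the members of $\cP_3(P)$ are precisely the pointed subspaces $\{P,B,C\}$—are immediate from the definitions of $p$-isometry and of $\cM_*(X,x,3)$, so I expect no real obstacle beyond fixing this interpretation.
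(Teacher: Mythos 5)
Your proof is correct; the paper in fact states Proposition~\ref{prop:punct_triangles} without any proof, and your argument --- isolating the $p$-isometry-invariant property ``the side opposite the marked point is strictly the longest,'' which fails for every class in $\cM_*(X,P,3)$ and holds for some class in $\cM_*(X,Q,3)$ --- is exactly the evident argument the authors intend. Your reading of ``longest'' as \emph{strictly} longest is the right one: it is the interpretation consistent with how the proposition is applied in Theorems~\ref{thm:isometric-D1} and~\ref{thm:isometric-Dn}, where the non-degenerate case produces $d_{GH}(B,C)=d_{GH}(B,A)+d_{GH}(A,C)>\max\bigl\{d_{GH}(B,A),d_{GH}(A,C)\bigr\}$.
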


\subsection{Equidistant Points Families}
For any points $P$ and $Q$ of a metric space $X$ by $\Mid(X,P,Q)$ we denote the set of all points $A\in X$ such that $|AP|=|AQ|$.

The next statement is evident.

\begin{prop}\label{prop:isom_and_midsets}
Let $f\:X\to Y$ be an arbitrary isometry of metric spaces, then for any $P,Q\in X$ it holds $f\bigl(\Mid(X,P,Q)\bigr)=\Mid\bigl(Y,f(P),f(Q)\bigr)$. In particular, each isometry $f\:X\to X$ preserving the points $P,Q\in X$ takes $\Mid(X,P,Q)$ onto itself.
\end{prop}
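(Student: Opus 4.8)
The plan is to unwind the definition of $\Mid$ and exploit the single fact that an isometry preserves all pairwise distances. By definition, $A\in\Mid(X,P,Q)$ means precisely $|AP|=|AQ|$. Since $f$ is distance-preserving, $|f(A)f(P)|=|AP|$ and $|f(A)f(Q)|=|AQ|$ for every $A\in X$, so the equality $|AP|=|AQ|$ holds if and only if $|f(A)f(P)|=|f(A)f(Q)|$, that is, if and only if $f(A)\in\Mid\bigl(Y,f(P),f(Q)\bigr)$. This chain of equivalences immediately yields the inclusion $f\bigl(\Mid(X,P,Q)\bigr)\ss\Mid\bigl(Y,f(P),f(Q)\bigr)$.

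For the reverse inclusion I would use that $f$, being an isometry of metric spaces, is a bijection, so $f^{-1}\:Y\to X$ is again an isometry with $f^{-1}\bigl(f(P)\bigr)=P$ and $f^{-1}\bigl(f(Q)\bigr)=Q$. Applying the inclusion already proved to $f^{-1}$ gives $f^{-1}\bigl(\Mid(Y,f(P),f(Q))\bigr)\ss\Mid(X,P,Q)$, and applying $f$ to both sides turns this into $\Mid\bigl(Y,f(P),f(Q)\bigr)\ss f\bigl(\Mid(X,P,Q)\bigr)$. Combining the two inclusions proves the asserted equality.

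Finally, the ``in particular'' statement follows by specializing to $Y=X$ together with an $f$ fixing $P$ and $Q$: then $f(P)=P$ and $f(Q)=Q$, so the equality reads $f\bigl(\Mid(X,P,Q)\bigr)=\Mid(X,P,Q)$, i.e.\ $f$ maps $\Mid(X,P,Q)$ onto itself. There is essentially no obstacle here; the only point worth flagging is that the equality (as opposed to a single inclusion) genuinely needs surjectivity of $f$---an isometric embedding that is not onto would in general give only one of the two inclusions.
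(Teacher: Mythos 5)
Your proof is correct; the paper itself states this proposition as evident and gives no proof, and your argument (unwinding the definition of $\Mid$ via preservation of distances, then using the inverse isometry for the reverse inclusion) is exactly the standard reasoning being taken for granted. Your remark that surjectivity of $f$ is genuinely needed for the equality, as opposed to a single inclusion, is a worthwhile observation consistent with the paper's convention that an isometry of metric spaces is onto.
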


\section{Invariant Subspaces in $\cM$}
\markright{\thesection.~Invariant Subspaces in $\cM$}

Several ideas concerning the invariance of some subspaces of $\cM$ under a self-isometry of $\cM$ are taken from~\cite{blog}.

\subsection{Invariance of $\D_1$}

\begin{thm}\label{thm:isometric-D1}
For any $A\in\cM$, $A\ne\D_1$, it holds
$$
\cM_*(\cM,A,3)\ne\cM_*(\cM,\,\D_1,3).
$$
\end{thm}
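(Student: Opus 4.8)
The plan is to apply Proposition~\ref{prop:punct_triangles} to the Gromov--Hausdorff space $X=\cM$ with the two marked points $P=\D_1$ and $Q=A$. It then suffices to verify the two hypotheses of that proposition: (i) in every triangle $\D_1BC\ss\cM$ the side $BC$ is never strictly the longest one, and (ii) for the given $A\ne\D_1$ there is a triangle $ABC\ss\cM$ whose longest side is exactly $BC$. Since a pointed triangle is determined up to $p$-isometry by the unordered pair of distances from the marked point together with the length of the opposite side, the property ``the side opposite the marked point is strictly longer than both others'' is a $p$-isometry invariant; hence once (i) and (ii) hold, the class realizing (ii) lies in $\cM_*(\cM,A,3)$ but not in $\cM_*(\cM,\D_1,3)$, which is the claim.

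For (i) I would exploit the special role of $\D_1$ as the cone vertex. By item~\refitem{prop:GH_simple:1} of Proposition~\ref{prop:GH_simple}, for any $Y\in\cM$ we have $|\D_1Y|=d_{GH}(\D_1,Y)=\frac12\diam Y$, while item~\refitem{prop:GH_simple:3} gives $|BC|=d_{GH}(B,C)\le\frac12\max\{\diam B,\diam C\}$. Combining these, $|BC|\le\max\{|\D_1B|,|\D_1C|\}$, so in no triangle $\D_1BC$ can $BC$ be strictly the longest side. This is hypothesis (i).

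For (ii) the key observation is that $A\ne\D_1$ forces $\diam A>0$, so $A$ is an interior point of the geodesic ray through the cone vertex. Concretely I would fix $\delta\in(0,1)$ and set $B=(1-\delta)A$ and $C=(1+\delta)A$. By item~\refitem{prop:GH_simple:4} the curve $t\mapsto tA$ is a shortest path for every pair of its points, so $A$ is the midpoint of $B$ and $C$: explicitly $|AB|=|AC|=\frac{\delta}{2}\diam A$ while $|BC|=\delta\diam A$. As $\diam A>0$, the three diameters $(1-\delta)\diam A$, $\diam A$, $(1+\delta)\diam A$ are pairwise distinct, so $A,B,C$ are three distinct points of $\cM$, and $|BC|=\delta\diam A>\frac{\delta}{2}\diam A=\max\{|AB|,|AC|\}$. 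Thus $BC$ is the strictly longest side of $ABC$, which is hypothesis (ii).

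I expect the main conceptual obstacle to be producing such a triangle \emph{uniformly in $A$}: since $A$ may be an infinite compact space, the explicit simplex-distance formulas (Proposition~\ref{prop:dist-n-simplex-finite} and its corollaries) do not apply directly, and one might be tempted into delicate estimates for $d_{GH}(A,B)$. The midpoint-on-a-ray construction sidesteps this entirely, using only $\diam A>0$ and item~\refitem{prop:GH_simple:4}. The one remaining point to handle with care is the precise reading of ``longest side'' in Proposition~\ref{prop:punct_triangles}: step (ii) yields a strict inequality, whereas step (i) yields only the non-strict bound $|BC|\le\max\{|\D_1B|,|\D_1C|\}$, and this asymmetry is exactly what is needed, because the separating $p$-isometry class is ``opposite side strictly longest'', realized at $A$ and excluded at $\D_1$.
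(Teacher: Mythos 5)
Your proposal is correct and follows essentially the same route as the paper: part (i) is the identical combination of items~\refitem{prop:GH_simple:1} and~\refitem{prop:GH_simple:3} of Proposition~\ref{prop:GH_simple}, and part (ii) is the same midpoint-on-the-cone-ray construction via item~\refitem{prop:GH_simple:4} (the paper takes $B=\frac12A$, $C=2A$ and invokes additivity along the geodesic, while you take $(1\pm\delta)A$ and compute the distances explicitly, which is an equivalent verification). No gaps.
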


\begin{proof}
By Items~(\ref{prop:GH_simple:1}) and~(\ref{prop:GH_simple:3}) of Proposition~\ref{prop:GH_simple}, for any $B,C\in\cM$ we have
$$
d_{GH}(B,C)\le\max\bigl\{d_{GH}(B,\D_1),\,d_{GH}(\D_1,C)\bigr\},
$$
thus, if $X=\D_1$, then in each triangle $XBC\ss\cM$ the side $BC$ cannot be the longest one.

If $A\in\cM$, $A\ne\D_1$, then, by item~(\ref{prop:GH_simple:4}) of Proposition~\ref{prop:GH_simple}, the curve $\g(t)=t\,A$, $t\in[1/2,2]$, is a shortest geodesic for which $A$ is an interior point. Therefore, by Theorem~\ref{thm:general_props}, for $B=\g(1/2)$ and $C=\g(2)$ we have $d_{GH}(B,C)=d_{GH}(B,A)+d_{GH}(A,C)$, thus, in such triangle $ABC$ the side $BC$ is the longest one. It remains to apply Proposition~\ref{prop:punct_triangles}.
\end{proof}

The next result follows immediately  from Theorem~\ref{thm:isometric-D1}, Proposition~\ref{prop:punct_isom_and_GH}, and Item~(\ref{prop:GH_simple:1}) of Propo\-sition~\ref{prop:GH_simple}.

\begin{cor}\label{cor:fD1}
Let $f\:\cM\to\cM$ be an arbitrary isometry, then $f(\D_1)=\D_1$. In particular, for any $X\in\cM$ we have $\diam f(X)=\diam X$.
\end{cor}

\subsection{Invariance of $t\,\D_n$, $n\ge2$}

Denote by $\cM^t$ the set of all $A\in\cM$ such that $\diam A\le t$. In other words, $\cM^t$ is a ball in $\cM$ of radius $t/2$ centered at $\D_1$.

\begin{thm}\label{thm:isometric-Dn}
For $t>0$ and any $A\in\cM^t$, $A\ne t\,\D_n$, $n=1,2,\ldots$, it holds
$$
\cM_*(\cM^t,A,3)\ne\cM_*(\cM^t,t\,\D_n,3).
$$
\end{thm}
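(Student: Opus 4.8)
The plan is to apply Proposition~\ref{prop:punct_triangles} to the metric space $X=\cM^t$ with $P=t\,\D_n$ and $Q=A$, exactly as Theorem~\ref{thm:isometric-D1} applied it with $X=\cM$, $P=\D_1$. Thus two things must be verified: first, that \emph{no} triangle $(t\,\D_n)BC\ss\cM^t$ has $BC$ as its strictly longest side; and second, that for $A\ne t\,\D_n$ there \emph{does} exist a triangle $ABC\ss\cM^t$ whose longest side is $BC$. These two properties are what separate $t\,\D_n$ from every other space of $\cM^t$.

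For the first property I would show that $2d_{GH}(B,C)\le\max\{2d_{GH}(t\,\D_n,B),\,2d_{GH}(t\,\D_n,C)\}=:\r$ for all $B,C\in\cM^t$. If $\#B\le n-1$ or $\#C\le n-1$, then Proposition~\ref{prop:GH-more-than-simplex} gives $\r\ge t\ge 2d_{GH}(B,C)$ (the last inequality by Item~\refitem{prop:GH_simple:3} of Proposition~\ref{prop:GH_simple}, since $\diam B,\diam C\le t$). Otherwise $\#B,\#C\ge n$, and by Proposition~\ref{prop:m_less_than_n_corresp} I can pick optimal correspondences $R_B,R_C$ whose fibres $\{B_i\}_{i\in\{1,\dots,n\}}$ and $\{C_i\}_{i\in\{1,\dots,n\}}$ are partitions; composing them into $R=\cup_i B_i\x C_i\in\cR(B,C)$ and applying Proposition~\ref{prop:disRforPartition}, I estimate $\dis R$ term by term. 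Here Proposition~\ref{prop:disRD} converts $\dis R_B\le\r$ into $\diam D_B\le\r$ and $\a(D_B)\ge t-\r$, while $\diam B\le t$ forces $|B_iB_j|'\le t$; the same holds for $C$, so each cross term $|B_iB_j|'-|C_iC_j|$ and $|C_iC_j|'-|B_iB_j|$ is at most $t-(t-\r)=\r$. Hence $2d_{GH}(B,C)\le\dis R\le\r$, as wanted.

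For the second property I distinguish two cases according to $\diam A$. If $\diam A<t$ (in particular $A\ne\D_1$, so $\diam A>0$), I use the homothety geodesic $\g(s)=sA$: with $s_1\in(0,1)$ and $s_2=t/\diam A>1$ both $B=\g(s_1)$ and $C=\g(s_2)$ lie in $\cM^t$, and by Item~\refitem{prop:GH_simple:4} of Proposition~\ref{prop:GH_simple} together with Theorem~\ref{thm:general_props} the point $A=\g(1)$ is interior to a shortest geodesic, so $d_{GH}(B,C)=d_{GH}(B,A)+d_{GH}(A,C)$ with both summands positive and $BC$ is strictly longest. If $\diam A=t$ then $A$ cannot be a simplex (a compact simplex of diameter $t$ is some $t\,\D_m$, which is excluded), and the homothety can only be run \emph{downward}. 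I would keep $B=\l A$ with $\l\in(0,1)$ for the downward direction and replace the forbidden upward homothety by a deformation staying in $\cM^t$, e.g.\ $C=(A,\min\{(1+\eta)\r_A,\,t\})$, a truncated rescaling which is again a compact metric of diameter $t$. For finite $A$ one may instead take the cleaner witness $C=t\,\D_{\#A}$: Proposition~\ref{prop:dist-n-simplex-finite} then yields the closed forms $2d_{GH}(A,C)=t-a$, $2d_{GH}(B,C)=t-\l a$ and $2d_{GH}(A,B)=(1-\l)t$ (here $a$ is the least distance of $A$, and $a<t$ as $A$ is not a simplex), whence $t-\l a>\max\{(1-\l)t,\,t-a\}$ and $BC$ is strictly longest.

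The hard part will be the case $\diam A=t$ for \emph{infinite} $A$. There the simplex witness degenerates: for a connected $A$ every partition $D$ has $\a(D)=0$, so Corollary~\ref{cor:GH-dist-alpha-beta} gives $2d_{GH}(A,t\,\D_k)=t=2d_{GH}(\l A,t\,\D_k)$ and no strictness survives, which is why an intrinsic upward deformation such as the truncated rescaling seems unavoidable. For that construction the identity correspondence furnishes the upper bounds $2d_{GH}(A,B)=(1-\l)t$, $2d_{GH}(A,C)\le t\eta/(1+\eta)$ and $2d_{GH}(B,C)\le t(1+\eta-\l)/(1+\eta)$, and the last quantity already exceeds the first two; the crux is therefore the matching \emph{lower} bound $2d_{GH}(B,C)\ge t(1+\eta-\l)/(1+\eta)$, i.e.\ near-optimality of the natural correspondence between $\l A$ and the truncated rescaling. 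Since the diameter estimate of Item~\refitem{prop:GH_simple:2} only gives $2d_{GH}(B,C)\ge(1-\l)t$, this lower bound must come from a genuine distortion argument exploiting the pairs of $A$ sitting at the truncation threshold $\r_A=t/(1+\eta)$, and I expect this to be the main technical obstacle of the proof.
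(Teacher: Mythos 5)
Your part (1) and the cases $\diam A<t$ and ($\diam A=t$, $A$ finite) of part (2) are sound and essentially follow the paper's route (the paper uses the witness pair $B=t\,\D_m$, $C=t\,\D_{m-1}$ in the finite case, while you use $B=\l A$, $C=t\,\D_{\#A}$; both work, and your observation that the simplex witness dies for connected $A$ because $\a(D)=0$ forces $2d_{GH}(A,t\,\D_k)=t$ is correct and is exactly why a different idea is needed). The genuine gap is the case $\diam A=t$ with $A$ infinite, which you explicitly leave open. Your proposed truncated rescaling $C=\bigl(A,\min\{(1+\eta)\r_A,t\}\bigr)$ stands or falls on the lower bound $2d_{GH}(\l A,C)\ge t(1+\eta-\l)/(1+\eta)$, and this is not a routine estimate: the only general lower-bound tool available in the paper is the diameter bound of Item~\refitem{prop:GH_simple:2}, which gives only $(1-\l)t$, and ruling out \emph{all} correspondences between two infinite spaces (not just the near-identity one) is precisely the kind of estimate that is hard or false without extra structure on $A$. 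As written, the proof does not close.

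The paper sidesteps this entirely by arranging for the long side to be a distance \emph{from a space containing a large simplex to a space with too few points}, where the lower bound is a pigeonhole argument (Proposition~\ref{prop:GH-more-than-simplex}) rather than a distortion estimate. Concretely: fix $\e\in(0,t/4)$ and let $B=\{b_1,\dots,b_{m-1}\}$ be a finite $\e$-net in $A$, so $2d_{GH}(A,B)\le2\e<t/2$ by Proposition~\ref{prop:e-net-GH}; then build a $2m$-point space $C=\{c_1,\dots,c_{2m}\}$ with $|c_ic_j|=t$ for $1\le i<j\le m$ and all other nonzero distances $t/2$. Since $C$ contains an isometric copy of $t\,\D_m$ and $\#B=m-1$, Proposition~\ref{prop:GH-more-than-simplex} gives $2d_{GH}(B,C)=t$ exactly. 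Finally the explicit correspondence $R=\{(b_i,c_i)\}_{i=1}^m\cup\bigl(A_1\x\{c_{m+1}\}\bigr)\cup\cdots\cup\bigl(A_m\x\{c_{2m}\}\bigr)$, with $A_i=B_\e(a_i)$ covering $A$ by sets of diameter $<t/2$, has $\dis R<t$, so $2d_{GH}(A,C)<t$ and $BC$ is strictly longest. If you want to salvage your write-up, replacing the truncated rescaling by this finite auxiliary space is the missing idea.
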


\begin{proof}
Similarly with the proof of Theorem~\ref{thm:isometric-D1}, (1) for each triangle $XBC\ss\cM^t$ with $X=t\D_n$ we show that the side $BC$ cannot be longer than the remaining sides; (2) we prove that for each $A\in\cM^t$, $A\ne t\D_n$, there exists a triangle $ABC\ss\cM^t$ such that the side $BC$ is longer than the remaining two sides; after that we apply Proposition~\ref{prop:punct_triangles}.

(1) Since the case $X=t\,\D_1=\D_1$ is already considered in Theorem~\ref{thm:isometric-D1}, we straightly pass to the case $n>1$.

Suppose otherwise, i.e., that for some $n$ there is a triangle $XBC\ss\cM^t$ such that $BC$ is its longest side. By Item~(\ref{prop:GH_simple:3}) of Proposition~\ref{prop:GH_simple}, we have $2d_{GH}(B,C)\le t$, therefore, $2d_{GH}(X,B)<t$ and $2d_{GH}(X,C)<t$.

\begin{lem}
Under the assumptions made above, we have $\#B\ge n$ and $\#C\ge n$.
\end{lem}

\begin{proof}
Suppose otherwise, and let, say, $\#B<n$, then, by Proposition~\ref{prop:dist-n-simplex-finite}, we have $2d_{GH}(X,B)=\max\{t,\diam B-t\}=t$, however, $2d_{GH}(X,B)<t$, a contradiction.
\end{proof}

Further, by Proposition~\ref{prop:m_less_than_n_corresp}, there exist $R\in\cR_{\opt}(t\,\D_n,B)$ and $S\in\cR_{\opt}(t\,\D_n,C)$ such that $D=\{R(i)\}$ and $E=\{S(i)\}$ are partitions of the spaces $B$ and $C$, respectively. Put $B_i=R(i)$, $C_i=S(i)$, $T=\cup_{i=1}^nB_i\x C_i$, then $T\in\cR(B,C)$ and, by Proposition~\ref{prop:disRforPartition}, we have
$$
\dis T\le\max\bigl\{\diam D,\diam E,\b(D)-\a(E),\b(E)-\a(D)\bigr\}.
$$
By Proposition~\ref{prop:disRD}, it holds
$$
\dis R=\max\{\diam D,\,t-\a(D),\,\b(D)-t\},\ \ \dis S=\max\{\diam E,\,t-\a(E),\,\b(E)-t\},
$$
therefore, $\max\{\diam D,\,t-\a(D)\}\le\dis R$ and $\max\{\diam E,\,t-\a(E)\}\le\dis S$. Since $\diam B\le t$ and $\diam C\le t$, then $\b(D)\le t$ and $\b(E)\le t$, thus
\begin{multline*}
2d_{GH}(B,C)\le\dis T\le\max\bigl\{\diam D,\diam E,t-\a(E),t-\a(D)\bigr\}\le\\ \le\max\{\dis R,\,\dis S\}=\max\bigl\{2d_{GH}(X,B),\,2d_{GH}(X,C)\bigr\},
\end{multline*}
and so $BC$ cannot be the longest side of the triangle $XBC$, a contradiction.

(2) If $\diam A<t$, then, by Item~(\ref{prop:GH_simple:4}) of Proposition~\ref{prop:GH_simple}, the curve $\g(s)=s\,A$, $s\in[1/2,t/\diam A]$, is a shortest geodesic belonging to $\cM^t$, because $\diam\g(s)\le(t/\diam A)\diam A=t$. Besides that, $A$ is an interior point of the curve $\g$. Thus, by Theorem~\ref{thm:general_props}, for $B=\g(1/2)$ and $C=\g(t/\diam A)$ we have $d_{GH}(B,C)=d_{GH}(B,A)+d_{GH}(A,C)$, therefore, in such a triangle $ABC$ the side $BC$ is the longest one.

Now, let $\diam A=t$, then $|xx'|\le t$ for all $x,x'\in A$, and for some pair of points the equality holds, but for some other pair we have inequality, because $A\neq t\D_n$. In particular, $\#A\ge 3$.

Suppose at first that $A$ is a finite metric space consisting of $m\ge 3$ elements, and let $a\le b$ be the two smallest distances between different points of the space $A$. Put $B=t\,\D_m$ and $C=t\,\D_{m-1}$. Then, by Proposition~\ref{prop:dist-n-simplex-finite}, we have $2d_{GH}(B,C)=t$ and $2d_{GH}(A,C)=\max\{a,t-b,\diam A-t\}$. Since $a<t$ is the least nonzero distance in $A$, $b>0$, and $\diam A\le t$, then $2d_{GH}(A,C)<t$.

Further, Proposition~\ref{prop:dist-n-simplex-finite} implies that $2d_{GH}(A,B)=\max\{t-a,\diam A-t\}$. Since $a>0$ and $\diam A=t$, then $2d_{GH}(A,B)<t$. Thus, in the case under consideration we have
$$
2\max\bigl\{d_{GH}(A,B),\,d_{GH}(A,C)\bigr\}<t=2d_{GH}(B,C),
$$
so $BC$ is the longest side of the triangle $ABC$.

Suppose now that $A$ is infinite. Fix an arbitrary $\e\in(0,t/4)$, choose a finite $\e$-net $\{b_1,\ldots,b_{m-1}\}$ in $A$, and take it as the space $B$. Then, by Proposition~\ref{prop:e-net-GH}, we have $d_{GH}(A,B)\le\e<t/4$.

Let $b_m\in A$ be an arbitrary point distinct from the chosen $b_i$. Put $A_i=B_\e(b_i)$, $i=1,\ldots,m$. Then $A=\cup_{i=1}^mA_i$ and $\diam A_i<t/2$ for all $i$.

To construct $C$ we take a set $\{c_1,\ldots,c_{2m}\}$ and define the distances on it as follows: $|c_ic_j|=t$ for all $1\le i<j\le m$, and all the remaining distances are equal to $t/2$. Since the subspace $\{c_1,\ldots,c_m\}\ss C$ is isometric to $t\,\D_m$, and the diameters of $B$ and $C$ are at most $t$, then, by Proposition~\ref{prop:GH-more-than-simplex}, we have $2d_{GH}(B,C)=t$.

Consider the following correspondence $R\in\cR(A,C)$:
$$
R=\{(b_i,c_i)\}_{i=1}^m\cup\bigl(A_1\x\{c_{m+1}\}\bigr)\cup\cdots\cup\bigl(A_m\x\{c_{2m}\}\bigr).
$$
It is easy to see that $\dis R<t$, thus $2d_{GH}(A,C)<t$, therefore, $BC$ is the longest side of the triangle $ABC$. Theorem is proved.
\end{proof}

\begin{cor}\label{cor:fDn}
Let $f\:\cM\to\cM$ be an arbitrary isometry, then for any integer $n\ge2$ and real $t>0$ we have $f(t\,\D_n)=t\,\D_n$.
\end{cor}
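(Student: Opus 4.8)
The plan is to combine the rigidity supplied by Theorem~\ref{thm:isometric-Dn} with the homothety structure of $\cM$ and the explicit distance formula for simplexes. First I would note that, by Corollary~\ref{cor:fD1}, $f(\D_1)=\D_1$ and $f$ preserves diameters; since $\cM^t$ is exactly the closed ball of radius $t/2$ centered at $\D_1$ (because $d_{GH}(A,\D_1)=\frac12\diam A$ by Item~(\ref{prop:GH_simple:1}) of Proposition~\ref{prop:GH_simple}), the map $f$ restricts to an isometry of $\cM^t$ onto itself. Applying Proposition~\ref{prop:punct_isom_and_GH} to $f|_{\cM^t}$ for $3$-point subspaces gives $\cM_*(\cM^t,t\,\D_n,3)=\cM_*\bigl(\cM^t,f(t\,\D_n),3\bigr)$, so $f(t\,\D_n)$ shares the triangle invariant of $t\,\D_n$. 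By Theorem~\ref{thm:isometric-Dn} every element of $\cM^t$ that is not a $t$-scaled simplex has a strictly different invariant (the proof produces a triangle whose opposite side is strictly longest, which $t\,\D_n$ never admits); together with $\diam f(t\,\D_n)=t$ and compactness (forcing finiteness), this yields $f(t\,\D_n)=t\,\D_{m}$ for some integer $m\ge2$.

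The crucial point — and the main obstacle — is that Theorem~\ref{thm:isometric-Dn} cannot by itself separate $t\,\D_n$ from the other same-scale simplexes $t\,\D_m$, since every simplex equally lacks a triangle with a strictly longest opposite side; thus the longest-side criterion leaves a genuine ambiguity $n\mapsto m$. To remove it I would first show this ambiguity is a single permutation independent of $t$. Writing $f(t\,\D_n)=t\,\D_{\pi_t(n)}$, the assignment $\pi_t$ is a bijection of $\{2,3,\dots\}$ (surjectivity follows by applying the same argument to $f^{-1}$). Since $f$ is continuous, $t\mapsto t\,\D_{\pi_t(n)}$ is a continuous curve, whereas by Corollary~\ref{cor:simplexes-distance} any two distinct simplexes of a common scale $t$ lie at distance $t/2$; hence $\pi_t(n)$ is locally constant in $t$, and therefore constant, so $f(t\,\D_n)=t\,\D_{\pi(n)}$ for one fixed bijection $\pi$ of $\{2,3,\dots\}$.

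Finally I would pin down $\pi=\id$ using cross-scale distances. For scales $s\ne t$ and indices $p<q$, the isometry $f$ gives $d_{GH}(s\,\D_p,t\,\D_q)=d_{GH}(s\,\D_{\pi(p)},t\,\D_{\pi(q)})$. By Corollary~\ref{cor:simplexes-distance} the left-hand side equals $\frac12\max\{t,\,s-t\}$, whereas the right-hand side equals $\frac12\max\{t,\,s-t\}$ when $\pi(p)<\pi(q)$ and $\frac12\max\{s,\,t-s\}$ when $\pi(p)>\pi(q)$; choosing $s=2$, $t=1$ makes these two values differ ($1$ versus $2$), so the case $\pi(p)>\pi(q)$ is impossible. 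Thus $\pi$ is strictly increasing, and a strictly increasing bijection of $\{2,3,\dots\}$ must be the identity. Hence $f(t\,\D_n)=t\,\D_n$, as claimed.
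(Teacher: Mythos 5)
Your proposal is correct and follows essentially the same route as the paper: first Theorem~\ref{thm:isometric-Dn} with Proposition~\ref{prop:punct_isom_and_GH} forces $f(t\,\D_n)=t\,\D_m$; then the bound $2d_{GH}(t\,\D_p,s\,\D_q)\ge\min\{t,s\}$ for $p\ne q$ from Corollary~\ref{cor:simplexes-distance} makes the index locally constant along each ray (the paper's Lemma~\ref{lem:rays-stable}); and finally the asymmetric cross-scale formula $\max\{t,s-t\}$ versus $\max\{s,t-s\}$ shows the index map preserves order. The only (cosmetic) difference is the last step: you conclude directly that a strictly increasing bijection of $\{2,3,\dots\}$ is the identity, while the paper reaches the same end by a pigeonhole count on the finite families $\{\D_i:1<i<n\}$ and $\{\D_j:1<j<m\}$.
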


\begin{proof}
By Proposition~\ref{prop:punct_isom_and_GH}, each isometry of the space $\cM$ preserves the family $\{t\,\D_n\}_{n=1}^\infty$, i.e., for each integer $n\ge2$ there exists $m\ge2$ such that $f(t\,\D_n)=t\,\D_m$. We have to show that $m=n$. To do that, we prove a number of auxiliary statements.

\begin{lem}\label{lem:rays-stable}
Suppose that for some $p,q\ge 2$ and $t>0$ we have $f(t\,\D_p)=t\,\D_q$, then $f(s\,\D_p)=s\,\D_q$ for all $s>0$.
\end{lem}

\begin{proof}
Suppose otherwise, i.e., that for some $s>0$ it holds $f(s\,\D_p)=s\,\D_r$, $r\ne q$. Since $f$ is isometric by Corollary~\ref{cor:simplexes-distance} we have
$$
|t-s|=2d_{GH}(t\,\D_p,s\,\D_p)=2d_{GH}\bigl(f(t\,\D_p),f(s\,\D_p)\bigr)=2d_{GH}(t\,\D_q,s\,\D_r)\ge\min\{t,s\},
$$
that does not hold for $s\in(t-t/2,t+t/2)$. This implies that the function $t\mapsto q$ is locally constant. Since each ray is connected, we get that this function is constant.
\end{proof}

\begin{lem}
Suppose that for some $p,q\ge 2$ it holds $f(\D_p)=\D_q$. Then for each $i<p$, $f(\D_i)=\D_j$, we have $j<q$.
\end{lem}

\begin{proof}
Indeed, suppose otherwise, i.e., that $j>q$ (the case $j=q$ is impossible, because $f$ is bijective). Then, by Corollary~\ref{cor:simplexes-distance} and Lemma~\ref{lem:rays-stable}, for any $t,s>0$ we have
$$
\max\{t,s-t\}=2d_{GH}(t\,\D_p,s\,\D_i)=2d_{GH}\bigl(f(t\,\D_p),f(s\,\D_i)\bigr)= 2d_{GH}(t\,\D_q,s\,\D_j)=\max\{s,t-s\}.
$$
To get a contradiction, we put $s=t/3$.
\end{proof}

Let us return to the proof that $m=n$. Suppose otherwise. Without loss of generality, we assume that $m<n$ (otherwise we consider $f^{-1}$). However, in this case the mapping $f$ takes the simplexes $\D_i$, $1<i<n$, to the simplexes $\D_j$, $1<j<m$, and distinct $i$ have to correspond to distinct $j$, a contradiction.
\end{proof}

In fact, we have shown that the unique ``corner'' points of the ball with the center at the single-point metric space is this space itself, together with the simplexes belonging to the boundary sphere, see Figure~\ref{fig:gh-space-deltas}.

\ig{gh-space-deltas-eng}{0.28}{fig:gh-space-deltas}{``Corner'' points of the ball with the center at the single-point metric space.}

\subsection{Invariance of the Family of Finite Spaces}

For any integer $n\ge2$ and real $t>0$ put (see Figure~\ref{fig:gh-space-mid})
$$
\cB_n(t)=\Mid(\cM,\D_1,t\,\D_n)\cap\{B\in\cM:\diam B\ge2t\}.
$$

\ig{gh-space-mid}{0.28}{fig:gh-space-mid}{Illustration for $\cB_n(t)$.}

\begin{prop}\label{prop:cBn}
For each integer $n\ge2$ and real $t>0$ the following statements hold\/\rom:
\begin{enumerate}
\item\label{prop:cBn:1} let $f\:\cM\to\cM$ be an arbitrary isometry, then $f\bigl(\cB_n(t)\bigr)=\cB_n(t)$\rom;
\item\label{prop:cBn:2} for each real $s\ge2t$ and integer $m>n$ we have $s\,\D_m\in\cB_n(t)$, in particular, $\cB_n(t)\ne\0$\rom;
\item\label{prop:cBn:3} for any $B\in\cB_n(t)$ we have $\#B>n$\rom;
\item\label{prop:cBn:4} for any $B\in\cB_n(t)$ we have $d_n(B)=\diam B$.
\end{enumerate}
\end{prop}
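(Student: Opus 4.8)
The plan is to record at the outset the concrete meaning of membership and then dispatch items (1)--(3) by direct appeals to the results already established, reserving the real work for item (4). Since $d_{GH}(B,\D_1)=\tfrac12\diam B$ by Item~(\ref{prop:GH_simple:1}) of Proposition~\ref{prop:GH_simple}, the condition $B\in\Mid(\cM,\D_1,t\,\D_n)$ is equivalent to $2d_{GH}(B,t\,\D_n)=\diam B$; hence $B\in\cB_n(t)$ means exactly $2d_{GH}(B,t\,\D_n)=\diam B$ together with $\diam B\ge2t$, an equivalence I would use repeatedly. For item~(1) I would combine the rigidity already proved: by Corollary~\ref{cor:fD1} every isometry $f$ fixes $\D_1$ and preserves diameters, and by Corollary~\ref{cor:fDn} it fixes $t\,\D_n$ for every $n\ge2$. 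Since isometries carry mid-sets to mid-sets (Proposition~\ref{prop:isom_and_midsets}), we get $f\bigl(\Mid(\cM,\D_1,t\,\D_n)\bigr)=\Mid\bigl(\cM,f(\D_1),f(t\,\D_n)\bigr)=\Mid(\cM,\D_1,t\,\D_n)$, and since $\diam f(B)=\diam B$ the set $\{B:\diam B\ge2t\}$ is preserved as well, so their intersection $\cB_n(t)$ is invariant. For item~(2) I would simply compute: $\diam(s\,\D_m)=s\ge2t$ settles the diameter condition, $d_{GH}(s\,\D_m,\D_1)=s/2$, and Corollary~\ref{cor:simplexes-distance} with $m>n$ and $s\ge2t$ gives $2d_{GH}(t\,\D_n,s\,\D_m)=\max\{s,t-s\}=s$, so $d_{GH}(s\,\D_m,t\,\D_n)=s/2$ too; thus $s\,\D_m\in\cB_n(t)$, and taking $s=2t$, $m=n+1$ shows $\cB_n(t)\ne\0$.

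For item~(3), an infinite $B$ trivially has $\#B>n$, so I would assume $B$ finite with $m=\#B\le n$ and derive a contradiction. If $m<n$, Proposition~\ref{prop:dist-n-simplex-finite} gives $2d_{GH}(t\,\D_n,B)=\max\{t,\diam B-t\}=\diam B-t$ using $\diam B\ge2t$; if $m=n$, it gives $2d_{GH}(t\,\D_n,B)=\max\{t-a,\diam B-t\}=\diam B-t$, again because $\diam B-t\ge t>t-a$. In either case $2d_{GH}(B,t\,\D_n)=\diam B-t<\diam B$, contradicting the reformulated membership condition; hence $\#B>n$.

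The crux is item~(4), and here I expect the main work. Because $\#B>n$ by item~(3), the family $\cD_n(B)$ is nonempty and Corollary~\ref{cor:GH-dist-alpha-beta} applies, giving $2d_{GH}(t\,\D_n,B)=\inf_{D\in\cD_n(B)}\max\bigl(\diam D,\,t-\a(D),\,\b(D)-t\bigr)$, so the membership identity becomes $\inf_{D}\max\bigl(\diam D,\,t-\a(D),\,\b(D)-t\bigr)=\diam B$. The decisive observation, which I would make next, is that the two auxiliary terms are strictly dominated by $\diam B$ for \emph{every} partition: $t-\a(D)\le t<\diam B$ and $\b(D)-t\le\diam B-t<\diam B$, both consequences of $\diam B\ge2t>0$. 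Hence for each $D$ the only term that can attain the lower bound $\diam B$ is $\diam D$, forcing $\diam D\ge\diam B$; combined with the trivial $\diam D\le\diam B$ this yields $\diam D=\diam B$ for every $D\in\cD_n(B)$, whence $d_n(B)=\inf_D\diam D=\diam B$. The main obstacle is precisely recognizing that the hypothesis $\diam B\ge2t$ is exactly what pins the two auxiliary terms below $\diam B$ and thereby upgrades the infimum statement into a pointwise statement valid for all partitions; everything else is bookkeeping with the formulas already on hand.
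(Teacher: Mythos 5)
Your proof is correct, and items (1)--(3) follow the paper's argument essentially verbatim (same invocations of Proposition~\ref{prop:isom_and_midsets}, Corollaries~\ref{cor:fD1} and~\ref{cor:fDn}, Corollary~\ref{cor:simplexes-distance}, and the first two cases of Proposition~\ref{prop:dist-n-simplex-finite}). The one place you diverge is item~(4): the paper simply quotes the fourth case of Proposition~\ref{prop:dist-n-simplex-finite}, namely $2d_{GH}(t\,\D_n,B)=\max\{d_n(B),\diam B-t\}$ for $\#B\ge n$ and $\diam B\ge2t$, and reads off $d_n(B)=\diam B$; you instead go back to Corollary~\ref{cor:GH-dist-alpha-beta} and observe that for \emph{every} $D\in\cD_n(B)$ the terms $t-\a(D)\le t<\diam B$ and $\b(D)-t\le\diam B-t<\diam B$ are strictly dominated, so the identity $\inf_D\max(\diam D,\,t-\a(D),\,\b(D)-t)=\diam B$ forces $\diam D=\diam B$ pointwise. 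Both are valid; your route essentially unfolds one level of the citation, but it buys two small things: it yields the stronger conclusion that every $n$-element partition of $B$ has diameter exactly $\diam B$ (not merely the infimum), and it avoids leaning on Proposition~\ref{prop:dist-n-simplex-finite}, which is formally stated only for \emph{finite} $X$, whereas $B\in\cB_n(t)$ may be infinite --- Corollary~\ref{cor:GH-dist-alpha-beta} is stated for arbitrary compact $X$ with $n\le\#X$, so your version is actually the cleaner one on this point.
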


\begin{proof}
(\ref{prop:cBn:1}) This immediately follows from Proposition~\ref{prop:isom_and_midsets},  Corollary~\ref{cor:fD1}, and Corollary~\ref{cor:fDn}.

(\ref{prop:cBn:2}) Indeed, by Corollary~\ref{cor:simplexes-distance} and Item~(\ref{prop:GH_simple:1}) of Proposition~\ref{prop:GH_simple}, we get
$$
2d_{GH}(t\,\D_n,s\,\D_m)=\max\{s,t-s\}=s=\diam(s\,\D_m)=2d_{GH}(\D_1,s\,\D_m).
$$

(\ref{prop:cBn:3}) Suppose otherwise, i.e., that $\#B\le n$. Denote by $a$ the smallest distance between different points of $B$. Then, by Proposition~\ref{prop:dist-n-simplex-finite} and definition of $\cB_n(t)$, we have
\begin{align*}
&2d_{GH}(t\,\D_n,B)=\max\{t,\diam B-t\}=\diam B-t & \text{for $\#B<n$},\\
&2d_{GH}(t\,\D_n,B)=\max\bigl\{t-a,\,\diam B-t\bigr\}=\diam B-t & \text{for $\#B=n$}.
\end{align*}
However, $2d_{GH}(t\,\D_n,B)=2d_{GH}(\D_1,B)=\diam B$, a contradiction.

(\ref{prop:cBn:4}) Since, by Item~(\ref{prop:cBn:3}), $\#B>n$, and also because $\diam B\ge2t$, we can use Proposition~\ref{prop:dist-n-simplex-finite} which implies that
$$
\diam B=2d_{GH}(t\,\D_n,B)=\max\bigl\{d_n(B),\,\diam B-t\bigr\},
$$
thus, $d_n(B)=\diam B$.
\end{proof}

For an integer $n\ge2$ and real $t>0$ put
$$
\cF_n(t)=\bigl\{A\in\cM:\diam A=t\ \text{and}\ 2d_{GH}(A,B)=2d_{GH}(\D_1,B)=\diam B\ \text{for all $B\in\cB_n(t)$}\bigr\}.
$$

\begin{rk}
The set $\cF_n(t)$ contains the simplex $t\,\D_n$ of diameter $t$, and it consists of all $A$ of the same diameter $t$ and such that they can play the role of the $t\,\D_n$ in definition of $\cB_n(t)$, see Figure~\ref{fig:gh-space-mid-focus}.
\end{rk}

\ig{gh-space-mid-focus-eng}{0.28}{fig:gh-space-mid-focus}{Illustration for $\cF_n(t)$.}

\begin{prop}\label{prop:invar-cFn(t)}
The set $\cF_n(t)$ is invariant under each isometry $f\:\cM\to\cM$.
\end{prop}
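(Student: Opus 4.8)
The plan is to deduce the invariance of $\cF_n(t)$ from the already-established invariance of its ``defining data'' under $f$. The set $\cF_n(t)$ is defined purely in terms of three ingredients: the diameter function $A\mapsto\diam A$, the single-point space $\D_1$, the Gromov--Hausdorff distance $d_{GH}$, and the set $\cB_n(t)$. Each of these is either preserved by $f$ or intrinsic to the metric on $\cM$. Specifically, $f$ preserves $d_{GH}$ because it is an isometry; by Corollary~\ref{cor:fD1} we have $f(\D_1)=\D_1$ and $\diam f(A)=\diam A$ for every $A\in\cM$; and by Item~(\ref{prop:cBn:1}) of Proposition~\ref{prop:cBn} we have $f\bigl(\cB_n(t)\bigr)=\cB_n(t)$.

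The key step is then a direct verification that membership in $\cF_n(t)$ transfers along $f$. First I would take an arbitrary $A\in\cF_n(t)$ and show $f(A)\in\cF_n(t)$. Since $\diam f(A)=\diam A=t$, the diameter condition holds for $f(A)$. For the distance condition, let $B\in\cB_n(t)$ be arbitrary. Because $f\bigl(\cB_n(t)\bigr)=\cB_n(t)$ and $f$ is a bijection, the element $B'=f^{-1}(B)$ again lies in $\cB_n(t)$, so the defining property of $A$ gives $2d_{GH}(A,B')=2d_{GH}(\D_1,B')=\diam B'$. Applying $f$ and using that it preserves distances, $f(\D_1)=\D_1$, and $\diam B'=\diam f(B')=\diam B$, we obtain
\[
2d_{GH}\bigl(f(A),B\bigr)=2d_{GH}\bigl(f(A),f(B')\bigr)=2d_{GH}(A,B')=\diam B'=\diam B=2d_{GH}(\D_1,B).
\]
As $B\in\cB_n(t)$ was arbitrary, $f(A)\in\cF_n(t)$; this shows $f\bigl(\cF_n(t)\bigr)\ss\cF_n(t)$.

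To finish I would apply the same argument to the isometry $f^{-1}$, which yields $f^{-1}\bigl(\cF_n(t)\bigr)\ss\cF_n(t)$, equivalently $\cF_n(t)\ss f\bigl(\cF_n(t)\bigr)$; combining the two inclusions gives the desired equality $f\bigl(\cF_n(t)\bigr)=\cF_n(t)$. I do not expect any genuine obstacle here: the statement is a formal consequence of the invariance of the constituents, and the only point requiring a little care is the quantifier on $B$ — one must run $B$ over all of $\cB_n(t)$ and use the surjectivity of $f$ on $\cB_n(t)$ to pass between $B$ and its preimage $B'$, rather than merely using $f\bigl(\cB_n(t)\bigr)\ss\cB_n(t)$. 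The whole argument is the standard ``an isometry preserves any set that is defined metrically from already-invariant data'' pattern.
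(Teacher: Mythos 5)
Your argument is correct: all the invariance facts you invoke (Corollary~\ref{cor:fD1}, Item~(\ref{prop:cBn:1}) of Proposition~\ref{prop:cBn}, preservation of $d_{GH}$) are available at this point in the paper, the element-wise chase through $B'=f^{-1}(B)$ is sound, and you correctly close the argument by running the same computation for $f^{-1}$ to get equality rather than mere inclusion. The paper reaches the same conclusion by a different packaging: it slices $\cB_n(t)$ into the $f$-invariant pieces $\cB_n(t,r)=\cB_n(t)\cap S_r(\D_1)$ and then exhibits $\cF_n(t)$ as an intersection of the sphere $S_{t/2}(\D_1)$ with equidistant sets of these pieces, so that invariance follows from the general closure properties of Proposition~\ref{prop:general_prop_isom} (invariance under intersections and under taking equidistant sets of invariant sets). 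That route is slicker in that it needs no explicit computation, but it quietly relies on an extra observation your proof does not need: the equidistant set $S_d(Y)$ is defined via the \emph{infimum} of distances to $Y$, whereas membership in $\cF_n(t)$ demands the equality $2d_{GH}(A,B)=\diam B$ for \emph{every} $B$; these agree only because Item~(\ref{prop:GH_simple:3}) of Proposition~\ref{prop:GH_simple} gives the universal upper bound $2d_{GH}(A,B)\le\diam B$ on each slice, so an infimum attaining that bound forces all values to attain it. Your direct verification sidesteps this subtlety entirely, at the cost of being slightly longer; both proofs are valid.
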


\begin{proof}
For any $r>0$ put $\cB_n(t,r)=\cB_n(t)\cap S_r(\D_1)$, then $\cB_n(t,r)$ is $f$-invariant by Item~(\ref{prop:cBn:1}) of Proposition~\ref{prop:cBn}, Corollary~\ref{cor:fD1}, and Proposition~\ref{prop:general_prop_isom}. By Proposition~\ref{prop:general_prop_isom}, the equidistant $S_d\bigl(\cB_n(t,r)\bigr)$ is $f$-invariant for each $d\ge0$ as well. It remains to note that $\cF_n(t)$ equals to the intersection of $f$-invariant sets $S_{t/2}(\D_1)$ and $S_{r/2}\bigl(\cB_n(t,r)\bigr)$ over all $r\ge2t$, and apply Proposition~\ref{prop:general_prop_isom} again.
\end{proof}

\begin{thm}\label{thm:cFn-le-n}
For any integer $n\ge2$ and real $t>0$ the set $\cF_n(t)$ coincides with the set of all finite metric spaces of diameter $t$ consisting of at most $n$ points.
\end{thm}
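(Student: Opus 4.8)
The plan is to prove the two inclusions separately. Let $\cG$ denote the set of all finite metric spaces of diameter $t$ with at most $n$ points. Every $A\in\cF_n(t)$ has diameter $t$ by definition, and for every $B\in\cB_n(t)$ one has $2d_{GH}(\D_1,B)=\diam B$ by Item~(\ref{prop:GH_simple:1}) of Proposition~\ref{prop:GH_simple}; so in both directions the membership test reduces to comparing $2d_{GH}(A,B)$ with $\diam B$.

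First I would show $\cG\ss\cF_n(t)$. Fix $A\in\cG$ and $B\in\cB_n(t)$. The upper bound $2d_{GH}(A,B)\le\diam B$ is immediate from Item~(\ref{prop:GH_simple:3}) of Proposition~\ref{prop:GH_simple}, since $\diam A=t<2t\le\diam B$. For the matching lower bound, take any $R\in\cR(A,B)$ and an irreducible $R_0\ss R$, so that $\dis R\ge\dis R_0$. By Proposition~\ref{prop:decompose_inrreducible}, $R_0=\cup_{i\in I}A_i\x B_i$ with $\{B_i\}_{i\in I}$ a partition of $B$ whose number of parts $m=\#I$ is at most $\#A\le n$. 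By Proposition~\ref{prop:disRforPartition}, $\dis R_0\ge\diam\{B_i\}\ge d_m(B)$; since $m\le n$, Remark~\ref{rk:dm-monotonicity} gives $d_m(B)\ge d_n(B)$, and $d_n(B)=\diam B$ by Item~(\ref{prop:cBn:4}) of Proposition~\ref{prop:cBn}. Hence $\dis R\ge\diam B$ for every $R$, so $2d_{GH}(A,B)\ge\diam B$, giving equality. Thus $A\in\cF_n(t)$.

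For the reverse inclusion $\cF_n(t)\ss\cG$ I would argue by contraposition: assuming $\#A\ge n+1$ (this covers both the infinite case and the finite case with more than $n$ points), I construct a single space $B\in\cB_n(t)$ with $2d_{GH}(A,B)<2t=\diam B$, so that $A\notin\cF_n(t)$. Choose a partition $\{A_1,\dots,A_N\}$ of $A$ into $N\ge n+1$ nonempty parts of sufficiently small diameter, with representatives $b_i\in A_i$ (for finite $A$ take singletons; for infinite $A$ use a fine enough finite $\e$-net, enlarging it to at least $n+1$ points). Put $\dl'=\min_{i\ne j}|b_ib_j|>0$. Let $B$ be the $2N$-point space on $\{u_1,\dots,u_N,w_1,\dots,w_N\}$ with $|u_iu_j|=2t$ and $|w_iw_j|=|b_ib_j|$ for $i\ne j$, and $|u_iw_j|=t$ for all $i,j$; one checks directly that this is a metric of diameter $2t$ whose subspace $\{u_1,\dots,u_N\}$ is isometric to $(2t)\D_N\sp(2t)\D_{n+1}$.

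The main work, and the step I expect to be the real obstacle, is verifying the two required properties of $B$. That $B\in\cB_n(t)$ follows because any partition of $B$ into $n$ parts must join two of the $u_i$, so $d_n(B)=2t=\diam B$; then Proposition~\ref{prop:dist-n-simplex-finite} (last case, applicable as $\#B\ge n$ and $\diam B\ge2t$) yields $2d_{GH}(t\,\D_n,B)=\max\{d_n(B),\diam B-t\}=2t=2d_{GH}(\D_1,B)$, placing $B$ in the midset, while $\diam B=2t$ gives the second defining condition. That $2d_{GH}(A,B)<2t$ follows from the correspondence $R=\{(b_i,u_i)\}_{i=1}^N\cup\bigcup_{i=1}^N\bigl(A_i\x\{w_i\}\bigr)$: the delicate point is that the only pairs whose distortion approaches $2t$ are the $\bigl((b_i,u_i),(b_j,u_j)\bigr)$ with $i\ne j$, contributing $2t-|b_ib_j|\le 2t-\dl'$, whereas pairing each $b_i$ with the moderately distant $w_i$ (at distance $t$, not $2t$) keeps every mixed and auxiliary pair strictly below $2t$ once the parts $A_i$ are fine enough. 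Hence $\dis R\le 2t-\dl'<2t$, so $2d_{GH}(A,B)<2t$, completing the contrapositive. The heart of the matter is precisely the design of $B$: the far-apart simplex $(2t)\D_N$ forces $B$ into $\cB_n(t)$, while routing the bulk of $A$ through the auxiliary points $w_i$ at distance $t$ prevents any point of $A$ from being matched simultaneously to two points of $B$ that are nearly $2t$ apart.
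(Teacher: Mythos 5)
Your proof is correct and follows essentially the same route as the paper: the inclusion $\cG\ss\cF_n(t)$ via irreducible correspondences, Proposition~\ref{prop:disRforPartition} and the monotonicity $d_m(B)\ge d_n(B)=\diam B$, and the reverse inclusion via a $2N$-point witness $B$ combining a far-apart simplex (forcing $d_n(B)=\diam B$) with mid-distance auxiliary points absorbing the bulk of $A$. The only deviations are cosmetic: you use an arbitrary correspondence plus an irreducible sub-correspondence instead of an optimal irreducible one, and your auxiliary points $w_i$ carry the distances $|b_ib_j|$ where the paper uses the constant $\mu/2$; both yield the same distortion bounds.
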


\begin{proof}
(1) Let us show that each at most $n$-points metric space $A$ of diameter $t$ belongs to $\cF_n(t)$. To do that, take an arbitrary $B\in\cB_n(t)$ and verify that $2d_{GH}(A,B)=\diam B$.

By Proposition~\ref{prop:opt-irreducible}, there exists $R\in\cR_{\opt}^0(A,B)$. By Proposition~\ref{prop:decompose_inrreducible}, the family $R_B=\cup_{a\in A}\bigl\{R(a)\bigr\}$ is a partition of $B$ consisting of at most $n$ elements, i.e., $R_B\in\cD_m(B)$ for some $m\le n$. By Remark~\ref{rk:dm-monotonicity}, $\diam R_B\ge d_m(B)\ge d_n(B)$. By Corollary~\ref{cor:dGH-for-irreducible-corr}, we have $2d_{GH}(A,B)\ge\diam R_B$, therefore, taking into account Item~(\ref{prop:cBn:4}) of Proposition~\ref{prop:cBn}, we get $2d_{GH}(A,B)\ge d_n(B)=\diam B$. Since $\diam A<\diam B$, then, by Item~(\ref{prop:GH_simple:3}) of Proposition~\ref{prop:GH_simple}, it holds
$$
\diam B=\max\{\diam A,\diam B\}\ge2d_{GH}(A,B)\ge\diam B,
$$
thus, $2d_{GH}(A,B)=\diam B$.

(2) Now, let us show that if $A\in\cF_n(t)$, then $\#A\le n$.

Suppose otherwise, i.e., that $\#A>n$. Put $\e=t/3$ and choose a finite $\e$-net $S=\{a_1,\ldots,a_m\}$ in $A$ consisting of $m\ge n+1$ points. Let $A_i=B_{\e}(a_i)$, then $\diam A_i\le2\e<t$ and $A=\cup_{i=1}^mA_i$.

Choose an arbitrary $\mu\ge2t$, put $B=\{b_1,\ldots,b_{2m}\}$, and define a metric on $B$ as follows: $|b_ib_j|=\mu$ for $1\le i<j\le m$, and all the remaining nonzero distances are equal to $\mu/2$. Clearly that $d_n(B)=\mu=\diam B\ge2t$, therefore, by Proposition~\ref{prop:dist-n-simplex-finite}, we have $2d_{GH}(t\,\D_n,B)=\max\bigl\{d_n(B),\diam B-t\bigr\}=\diam B$, thus, $B\in\cB_n(t)$.

Define $R\in\cR(A,B)$ as follows:
$$
R=\{(a_i,b_i)\}_{i=1}^m\cup\bigl(A_1\x\{b_{m+1}\}\bigr)\cup\cdots\cup\bigl(A_m\x\{b_{2m}\}\bigr),
$$
then $2d_{GH}(A,B)\le\dis R<\mu=\diam B$, a contradiction. Thus, $\#A\le n$. Theorem is proved.
\end{proof}

\begin{cor}\label{cor:invarFiniteSpaces}
Every isometry $f\:\cM\to\cM$ takes each $n$-point metric space to an $n$-point metric space of the same diameter.
\end{cor}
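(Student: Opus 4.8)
The plan is to read everything off from the machinery already assembled: diameter preservation is immediate, and the cardinality claim reduces to invariance of the sets $\cF_n(t)$ together with their description in Theorem~\ref{thm:cFn-le-n}. First I would dispose of the diameter clause: by Corollary~\ref{cor:fD1} any isometry $f\:\cM\to\cM$ satisfies $\diam f(X)=\diam X$ for every $X$, so ``of the same diameter'' comes for free. The case $n=1$ is also free, since a one-point space is $\D_1$ and $f(\D_1)=\D_1$ by the same corollary.

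Next, fix an integer $n\ge2$ and an $n$-point space $A$ of diameter $t$, noting that $t>0$. By Theorem~\ref{thm:cFn-le-n} the set $\cF_n(t)$ consists exactly of the finite metric spaces of diameter $t$ having at most $n$ points, so $A\in\cF_n(t)$. Proposition~\ref{prop:invar-cFn(t)} gives $f\bigl(\cF_n(t)\bigr)=\cF_n(t)$, hence $f(A)\in\cF_n(t)$; this already shows that $f(A)$ is finite, of diameter $t$, with $\#f(A)\le n$.

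To obtain the reverse inequality I would run the identical argument for $f^{-1}$, which is again an isometry of $\cM$. Put $k=\#f(A)$; since $\diam f(A)=t>0$ we have $k\ge2$, so $f(A)$ is a $k$-point space of diameter $t$ and therefore lies in $\cF_k(t)$ by Theorem~\ref{thm:cFn-le-n}. Applying invariance of $\cF_k(t)$ to $f^{-1}$ yields $A=f^{-1}\bigl(f(A)\bigr)\in\cF_k(t)$, whence $n=\#A\le k$. Combining this with $k\le n$ forces $k=n$, which is exactly the assertion.

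The proof is short because the real content sits in Theorem~\ref{thm:cFn-le-n} and Proposition~\ref{prop:invar-cFn(t)}; there is no genuine obstacle left, only two points to watch. One must pass to $f^{-1}$ (rather than $f$) to promote the bound $\#f(A)\le n$ into an equality, and one must explicitly record $k\ge2$ --- forced by positivity of the diameter --- so that $f(A)$ legitimately falls under the standing hypothesis $n\ge2$ of Theorem~\ref{thm:cFn-le-n} when the argument is played back through $f^{-1}$.
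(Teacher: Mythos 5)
Your proposal is correct and follows essentially the same route as the paper: identify the $n$-point space $A$ of diameter $t$ as an element of $\cF_n(t)$, use Proposition~\ref{prop:invar-cFn(t)} and Theorem~\ref{thm:cFn-le-n} to get $\#f(A)\le n$, and then play the argument back through $f^{-1}$ to force equality, with the diameter clause coming from Corollary~\ref{cor:fD1}. Your explicit handling of the $n=1$ case and of the requirement $k\ge 2$ (so that Theorem~\ref{thm:cFn-le-n} applies to $f(A)$) are small points the paper leaves implicit, but the argument is the same.
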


\begin{proof}
By Theorem~\ref{thm:cFn-le-n}, the set $\cF_n(t)$ coincides with the family of all of diameter $t>0$ consisting of at most $n$ points. By Proposition~\ref{prop:invar-cFn(t)}, the set $\cF_n(t)$ is invariant under every isometry $f\:\cM\to\cM$, thus, each $n$-point metric space $A\in\cF_n(t)$ is mapped to an at most $n$ point metric space $B=f(A)$. Suppose that $\#B<n$. Since $f^{-1}$ is an isometry of $\cM$ also, then, by the above arguments, we have $\#A=\#f^{-1}(B)<n$, a contradiction.
\end{proof}

\section{Groups Actions}
\markright{\thesection.~Groups Actions}

In what follows, we need some basic facts from the theory of group action on topological and metric spaces.

Recall that if a compact group $G$ acts continuously on a topological space $X$, then its orbits are compact subsets. Indeed, if $\theta\:G\x X\to X$ is the mapping defining this action, then $\theta$ is continuous, and the orbit $G(x)$ of a point $x$ equals $\theta\bigl(G\x\{x\}\bigr)$, therefore, it is compact as a continuous image of a compact set. If a group $G$ acts on a set $X$, then by $X/G$ we denote the corresponding set of orbits.

If $X$ is a metric space, and the group $G$ is compact, then the following non-negative symmetric function $(A,B)\mapsto|AB|$, $A,B\in X/G$, does not vanish for any $A\ne B$.

\begin{prop}\label{prop:compact-group-action}
If a compact group $G$ acts on a metric space $X$ by isometries, then the function $(A,B)\mapsto|AB|$ defined on pairs of elements of the orbit set $X/G$ is a metric.
\end{prop}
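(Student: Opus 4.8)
The plan is to verify the four metric axioms for the function $(A,B)\mapsto|AB|$ on the orbit set $X/G$. Non-negativity and symmetry are immediate from the definition of $|AB|=\inf\{|ab|:a\in A,\,b\in B\}$ as an infimum of distances in $X$. For the two remaining axioms the essential input is that each orbit $A=G(x)$ is a \emph{compact} subset of $X$ (as recalled just above, since $G$ is compact and acts continuously); consequently the continuous function $(a,b)\mapsto|ab|$ on the compact product $A\x B$ attains its infimum, so $|AB|$ is realized at some pair.

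For positive-definiteness I would argue by contradiction: if $A\ne B$ but $|AB|=0$, then the attained minimum gives points $a\in A$, $b\in B$ with $|ab|=0$, i.e.\ $a=b$. This common point lies in both orbits, but two orbits of a group action either coincide or are disjoint; hence $A=B$, a contradiction. (This is essentially the content of the non-vanishing remark preceding the statement, now justified via compactness.)

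The heart of the matter is the triangle inequality $|AC|\le|AB|+|BC|$ for orbits $A,B,C$, and this is exactly where the hypothesis that $G$ acts \emph{transitively on each orbit by isometries} is used; note that for arbitrary compact subsets the infimal setwise distance is merely a pseudo-distance and the triangle inequality generally fails. By compactness choose $a\in A$, $b\in B$ realizing $|AB|=|ab|$, and $b'\in B$, $c\in C$ realizing $|BC|=|b'c|$. Since $b$ and $b'$ lie in the same orbit $B$, there is $g\in G$ with $g(b)=b'$. As $g$ is an isometry and preserves orbits (so $g(A)=A$), we get $g(a)\in A$ and $|g(a)\,b'|=|g(a)\,g(b)|=|ab|=|AB|$. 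The ordinary triangle inequality in $X$ then yields
$$
|AC|\le|g(a)\,c|\le|g(a)\,b'|+|b'c|=|AB|+|BC|,
$$
using $g(a)\in A$ and $c\in C$.

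I expect the only real subtlety to be this alignment step. The orbit distance satisfies the triangle inequality — in contrast to the naive distance between sets — precisely because the transitive isometric action supplies a single group element $g$ that simultaneously carries the optimal pair realizing $|AB|$ and the chosen representative of the middle orbit $B$ onto the representative used for $|BC|$, thereby chaining the two optimal segments through one common point of $B$.
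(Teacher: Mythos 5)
Your proof is correct and the key step --- realizing $|AB|$ and $|BC|$ by compactness and then using a group element $g$ with $g(b)=b'$ to chain the two optimal pairs through a common point of the middle orbit --- is exactly the argument the paper gives for the triangle inequality. The paper treats non-negativity, symmetry, and positive-definiteness as already settled by the remark preceding the statement, whereas you spell them out, but the substance is the same.
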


\begin{proof}
It remains to verify the triangle inequality. Since the orbits are compact subsets  in this case, then for any $A,B,C\in X/G$ there exist $a\in A$, $b_1,b_2\in B$, and $c\in C$ such that $|ab_1|=|AB|$ and $|b_2c|=|BC|$. Since $b_1$ and $b_2$ belongs to the same orbit, there exists $g\in G$ such that $g(b_1)=b_2$. However, $g\:X\to X$ is an isometry, therefore, $\bigl|g(a)g(b_1)\bigr|=|AB|$ and, thus, $|AC|\le|g(a)c|\le\bigl|g(a)g(b_1)\bigr|+|b_2c|=|AB|+|BC|$.
\end{proof}

The metric on the set $X/G$ defined in Proposition~\ref{prop:compact-group-action} is called a \emph{quotient-metric}. In what follows, speaking about the metric space $X/G$, we always mean just this quotient-metric.

\begin{prop}\label{prop:compact-group-action-stab}
Suppose that a finite group $G$ acts of a metric space $X$. Then for every point $x\in X$ the following statements hold.
\begin{enumerate}
\item\label{prop:compact-group-action-stab:1} For any $\e>0$ and any $g$ from the stabilizer $G_x$ of the point $x$ we have $g\bigl(B_\e(x)\bigr)=B_\e(x)$. Thus, for each $\e>0$ an action of the stabilizer $G_x$ of the point $x\in X$ on the neighbourhood $B_\e(x)$ is defined.
\item\label{prop:compact-group-action-stab:2} If $G\sm G_x\ne\0$, then there exists $\e>0$ such that for all $g\in G\sm G_x$ it holds $B_\e(x)\cap g\bigl(B_\e(x)\bigr)=\0$, in particular, for every point $y\in B_\e(x)$ its stabilizer $G_y$ is a subgroup of $G_x$, and also $G(y)\cap B_\e(x)=G_x(y)$.
\item\label{prop:compact-group-action-stab:3} There exists $\e>0$ such that for any $y_1,y_2\in B_\e(x)$ the distance between the orbits $G(y_1)$ and $G(y_2)$ is equal to the distance between the orbits $G_x(y_1)$ and $G_x(y_2)$.
\end{enumerate}
\end{prop}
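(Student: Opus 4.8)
The plan is to reduce all three items to a single positive constant and then argue by isometry-invariance of distances together with the triangle inequality. I assume, as in Proposition~\ref{prop:compact-group-action}, that $G$ acts by isometries (otherwise item~(\ref{prop:compact-group-action-stab:1}) already fails). If $G=G_x$ the three assertions are trivial or vacuous, so assume $G\sm G_x\ne\0$. Since $G$ is finite and $g(x)\ne x$ for every $g\in G\sm G_x$, the number
$$
d=\min\bigl\{|x\,g(x)|:g\in G\sm G_x\bigr\}
$$
is strictly positive. Each item follows by taking $\e$ to be a suitable fraction of $d$.

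For item~(\ref{prop:compact-group-action-stab:1}) no smallness of $\e$ is needed. Given $g\in G_x$ and $y\in B_\e(x)$, isometry yields $|x\,g(y)|=|g(x)\,g(y)|=|x\,y|\le\e$, so $g(y)\in B_\e(x)$ and $g\bigl(B_\e(x)\bigr)\ss B_\e(x)$; applying the same to $g^{-1}\in G_x$ gives the reverse inclusion, hence $g\bigl(B_\e(x)\bigr)=B_\e(x)$, which is exactly the assertion that $G_x$ acts on $B_\e(x)$.

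For item~(\ref{prop:compact-group-action-stab:2}) I would take any $\e<d/2$. If some $y$ lay in $B_\e(x)\cap g\bigl(B_\e(x)\bigr)$ for $g\in G\sm G_x$, writing $y=g(z)$ with $z\in B_\e(x)$ and using $|y\,g(x)|=|g(z)\,g(x)|=|z\,x|$ would give $|x\,g(x)|\le|x\,y|+|z\,x|\le2\e<d$, contradicting the definition of $d$; hence the intersection is empty. The two ``in particular'' claims then follow at once: if $g\in G_y$ then $y=g(y)\in B_\e(x)\cap g\bigl(B_\e(x)\bigr)$, forcing $g\in G_x$, so $G_y\ss G_x$; and for the orbit identity, $G_x(y)\ss B_\e(x)$ by item~(\ref{prop:compact-group-action-stab:1}), while any $g\in G$ with $g(y)\in B_\e(x)$ produces a point of $B_\e(x)\cap g\bigl(B_\e(x)\bigr)$ and so again lies in $G_x$, whence $G(y)\cap B_\e(x)=G_x(y)$.

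For item~(\ref{prop:compact-group-action-stab:3}) I would first rewrite the orbit distances, using finiteness and isometry-invariance, as $|G(y_1)G(y_2)|=\min_{g\in G}|y_1\,g(y_2)|$ and $|G_x(y_1)G_x(y_2)|=\min_{g\in G_x}|y_1\,g(y_2)|$; the inequality $\le$ is clear from $G_x\ss G$. To get equality, take $\e\le d/4$: the $G_x$-minimum is at most $|y_1\,y_2|\le2\e$ (take $g$ equal to the identity), whereas for $g\in G\sm G_x$ the triangle inequality gives $|y_1\,g(y_2)|\ge|x\,g(x)|-|x\,y_1|-|g(x)\,g(y_2)|\ge d-2\e\ge2\e$, so the minimum over all of $G$ is already attained inside $G_x$. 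The only point requiring care is bookkeeping the constant across the items — item~(\ref{prop:compact-group-action-stab:3}) needs the tighter $\e\le d/4$ — after which a single smallest choice of $\e$ serves all three; none of the steps presents a genuine obstacle beyond these elementary estimates.
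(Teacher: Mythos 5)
Your proposal is correct and follows essentially the same route as the paper: the same positive constant ($d$ is the paper's $r=|xZ|$ with $Z=\{g(x):g\in G\sm G_x\}$), the same thresholds $\e<d/2$ and $\e\le d/4$, and the same triangle-inequality estimates; your reformulation of the orbit distance as $\min_{g\in G}|y_1\,g(y_2)|$ in item~(3) just makes explicit what the paper phrases geometrically. Your remark that the action must be by isometries (implicit in the paper's statement) is a fair and correct reading.
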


\begin{proof}
(\ref{prop:compact-group-action-stab:1}) Since for each $g\in G_x$ we have $g(x)=x$, and $g$ is an isometry, then $g\bigl(B_\e(x)\bigr)=B_\e(x)$ for any $\e$.

(\ref{prop:compact-group-action-stab:2}) Put $Z=\bigl\{g(x):g\in G\sm G_x\bigr\}$, then $x\not\in Z$, and $Z$ is a nonempty finite set (because $G\sm G_x\ne\0$), thus $r:=|xZ|>0$. Choose an arbitrary $\e<r/2$, then for all $g\in G\sm G_x$ we have $B_\e(x)\cap g\bigl(B_\e(x)\bigr)=\0$. In particular, this implies that the stabilizer of each point $y\in B_\e(x)$ does not intersect $G\sm G_x$. Besides, for any point $y\in B_\e(x)$ and each $g\in G_x$ we have $\bigl|x\,g(y)\bigr|=\bigl|g(x)g(y)\bigr|=|xy|\le\e$, therefore, $B_\e(x)$ contains exactly that part of the orbit $G(y)$ which is generated by the elements of the stabilizer $G_x$.

(\ref{prop:compact-group-action-stab:3}) If $G_x=G$, then we can take an arbitrary $\e$.

If $G\sm G_x\ne\0$, then for $r$ from Item~(\ref{prop:compact-group-action-stab:2}), let us choose an arbitrary $\e<r/4$, then the distance between any points from $B_\e(x)$ is less than $r/2$, and the distance between any point from $B_\e(x)$ and any point from $B_\e\bigl(g(x)\bigr)$ for $g\in G\sm G_x$ is greater than $r/2$. Thus, the distance between the orbits $G(y_1)$ and $G(y_2)$, $y_1,y_2\in B_\e(x)$, is attained at those points of these orbits that both belong to a neighbourhood $B_\e\bigl(g(x)\bigr)$, and this distance is the same in each of these neighbourhoods (because $G$ acts by isometries). By Item~(\ref{prop:compact-group-action-stab:2}), all points of the orbits in consideration that belong to the ball $B_\e(x)$ form the sets $G_x(y_1)$ and $G_x(y_2)$, respectively.
\end{proof}

\begin{dfn}
Under the assumptions and notations of Proposition~\ref{prop:compact-group-action-stab}, the closed ball $B_\e(x)$ with any $\e>0$ for $G_x=G$, and with $\e<r/4$ for $G_x\ne G$, we call a \emph{canonical neighborhood of the point $x\in X$}.
\end{dfn}

\begin{cor}\label{cor:proj-is-loc-isom}
Let $G$ be a finite group acting by isometries on a metric space $X$. Choose an arbitrary point $x\in X$. Then the stabilizer $G_x$ acts on each canonical neighbourhood $B_\e(x)$, and  $\pi_{\e,x}\:B_\e(x)/G_x\to B_\e\bigl(G(x)\bigr)\ss X/G$, $\pi_{\e,x}\:G_x(y)\mapsto G(y)$, is an isometry. Further, for each $g\in G$ the neighbourhood  $B_\e\bigl(g(x)\bigr)=g\bigl(B_\e(x)\bigr)$ is canonical also, and the mapping $g$ generates an isometry $g_{\e,x}\:B_\e(x)/G_x\to B_\e\bigl(g(x)\bigr)/G_{g(x)}$, $g_{\e,x}\:G_x(y)\mapsto G_{g(x)}\big(g(y)\big)$. Besides, the mappings $g_{\e,x}$, $\pi_{\e,x}$, and $\pi_{\e,g(x)}$ are agreed with each other in the following sense\/\rom: $\pi_{\e,x}=\pi_{\e,g(x)}\c g_{\e,x}$. Thus, each mapping $\pi^{-1}_{\e,g(x)}\c\pi_{\e,x}$ is generated by the mapping $g$.
\end{cor}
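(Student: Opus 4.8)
The statement packages four assertions, and the plan is to dispatch them in turn, leaning on Proposition~\ref{prop:compact-group-action-stab} at every step. The first assertion—that $G_x$ acts on each canonical neighbourhood $B_\e(x)$—is already Item~(\ref{prop:compact-group-action-stab:1}); moreover, since $G_x$ preserves $B_\e(x)$, the whole $G_x$-orbit $G_x(y)$ of a point $y\in B_\e(x)$ stays inside $B_\e(x)$, so the quotient $B_\e(x)/G_x$ and the proposed map $G_x(y)\mapsto G(y)$ both make sense.

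To see that $\pi_{\e,x}$ is well defined and injective, I would use the fact that for $y\in B_\e(x)$ the ball meets the $G$-orbit $G(y)$ in exactly $G_x(y)$, which is precisely Item~(\ref{prop:compact-group-action-stab:2}) for a canonical neighbourhood. Hence $G(y_1)=G(y_2)$ with $y_1,y_2\in B_\e(x)$ forces $y_2\in G(y_1)\cap B_\e(x)=G_x(y_1)$, i.e. $G_x(y_1)=G_x(y_2)$. Surjectivity onto $B_\e\bigl(G(x)\bigr)$ I would obtain by a folding argument: if the orbit distance $|G(z)\,G(x)|\le\e$, then some representatives satisfy $|z'\,g(x)|\le\e$, and applying the isometry $g^{-1}$ moves $z'$ to a point $y=g^{-1}(z')\in B_\e(x)$ with $G(y)=G(z)$. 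Finally, distance preservation is exactly Item~(\ref{prop:compact-group-action-stab:3}): the distance between $G(y_1)$ and $G(y_2)$ agrees with the distance between the $G_x$-orbits, which is the quotient distance in $B_\e(x)/G_x$. Together these give that $\pi_{\e,x}$ is an isometry.

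For the equivariance part I would first record that $B_\e\bigl(g(x)\bigr)=g\bigl(B_\e(x)\bigr)$, since $g$ is an isometry, and that this neighbourhood is again canonical: conjugation gives $G_{g(x)}=gG_xg^{-1}$, and applying $g^{-1}$ to the defining set of Item~(\ref{prop:compact-group-action-stab:2}) shows that the separation radius $r$ of $g(x)$ equals that of $x$, so the same bound $\e<r/4$ works. Then I would check that $g$ carries orbits to orbits, $g\bigl(G_x(y)\bigr)=G_{g(x)}\bigl(g(y)\bigr)$, which simultaneously makes $g_{\e,x}$ well defined, bijective (its inverse being generated by $g^{-1}$), and distance preserving (because $g$ is an isometry). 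The compatibility $\pi_{\e,x}=\pi_{\e,g(x)}\c g_{\e,x}$ is then a one-line verification on a representative $G_x(y)$: the right-hand side sends it to $G_{g(x)}\bigl(g(y)\bigr)$ and thence to $G\bigl(g(y)\bigr)=G(y)$. Rearranging yields $g_{\e,x}=\pi_{\e,g(x)}^{-1}\c\pi_{\e,x}$, so this composite is generated by $g$.

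The only genuinely non-formal point—and the step I expect to be the crux—is showing that the quotient metric on $B_\e(x)/G_x$, a priori computed using only the restricted $G_x$-action inside the ball, coincides with the restriction of the global quotient metric on $X/G$; in other words, that nearby orbits realize their minimal distance using elements of the stabilizer rather than of the whole group, and that this minimum is not decreased by representatives lying outside $B_\e(x)$. This is where the smallness of $\e$ (the choice $\e<r/4$ built into the canonical neighbourhood) is essential, and it is exactly what Item~(\ref{prop:compact-group-action-stab:3}) supplies; the remaining verifications are routine bookkeeping about well-definedness and equivariance.
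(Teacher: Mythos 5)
Your proof is correct and follows exactly the route the paper intends: the corollary is stated without proof as a direct consequence of Proposition~\ref{prop:compact-group-action-stab}, and your argument is precisely the fleshing-out of that derivation (Item~(\ref{prop:compact-group-action-stab:1}) for the restricted action, Item~(\ref{prop:compact-group-action-stab:2}) for injectivity, finiteness of orbits for surjectivity, Item~(\ref{prop:compact-group-action-stab:3}) for distance preservation, and conjugation $G_{g(x)}=gG_xg^{-1}$ for the equivariance and compatibility claims). You also correctly identify the one non-formal point --- that the local $G_x$-quotient metric agrees with the global one --- and attribute it to the right item.
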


In what follows we especially need a version of Corollary~\ref{cor:proj-is-loc-isom} in the situation, when the stabilizer $G_x$ is trivial (i.e., it consists of the unit element only). Since in this case $G_x(y)=\{y\}$ for any $x,y\in X$, then the mapping $\pi_{\e,x}\:G_x(y)\mapsto G(y)$ coincides with the restriction of the canonical projection $\pi\:y\mapsto G(y)$ onto the canonical neighbourhood $B_\e(x)$. Similarly, in this case, $g_{\e,x}\:B_\e(x)/G_x\to B_\e\bigl(g(x)/G_{g(x)}\bigr)$ is a mapping between the canonical neighbourhoods $B_\e(x)$ and $B_\e\bigl(g(x)\bigr)$, and it coincides with the restriction of the mapping $g$ onto the canonical neighbourhood $B_\e(x)$, thus, in this case Corollary~\ref{cor:proj-is-loc-isom} can be reformulated as follows.

\begin{cor}\label{cor:proj-is-loc-isom-triv-stab}
Let $G$ be an arbitrary finite group acting on a metric space $X$ by isometries, and let $\pi\:X\to X/G$ be the canonical projection, $\pi\:x\mapsto G(x)$. Suppose that the stabilizers of all points from $X$ are trivial. Then the restriction $\pi_{\e,x}$ of the projection $\pi$ onto each canonical neighbourhood $B_\e(x)\ss X$ of the point $x$ maps isometrically the $B_\e(x)$ onto $B_\e\bigl(G(x)\bigr)\ss X/G$. Further, for each $g\in G$ the neighbourhood $B_\e\bigl(g(x)\bigr)=g\bigl(B_\e(x)\bigr)$ is also canonical. Besides, the restriction $g_{\e,x}\:B_\e(x)\to B_\e\bigl(g(x)\bigr)$ of the mapping $g$, being isometry, is agreed with the mappings $\pi_{\e,x}$ and $\pi_{\e,g(x)}$ in the following sense\/\rom: $\pi_{\e,x}=\pi_{\e,g(x)}\c g_{\e,x}$. Thus, each mapping $\pi^{-1}_{\e,g(x)}\c\pi_{\e,x}$ coincides with the restriction of the mapping $g$ onto the canonical neighbourhood $B_\e(x)$.
\end{cor}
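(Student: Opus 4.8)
The statement is a direct specialization of Corollary~\ref{cor:proj-is-loc-isom} to the case of trivial stabilizers, so the plan is simply to trace how the structure maps of that corollary degenerate once every $G_x$ collapses to $\{\id\}$. First I would record that canonical neighbourhoods exist. Since every stabilizer is trivial, $G_x=\{\id\}$, hence $G\sm G_x=G\sm\{\id\}$, which is empty exactly when $G$ itself is trivial; in that degenerate case $X/G=X$ and every claim is immediate, so I assume $G\ne\{\id\}$. Then $G\sm G_x\ne\0$ for every $x$, and Proposition~\ref{prop:compact-group-action-stab} supplies the radius $r=|xZ|>0$ with $Z=\{g(x):g\ne\id\}$, so that $B_\e(x)$ is a canonical neighbourhood for every $\e<r/4$.

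Next I would make the identification $B_\e(x)/G_x=B_\e(x)$. Because $G_x$ is trivial, each orbit $G_x(y)$ is the singleton $\{y\}$, so the orbit space $B_\e(x)/G_x$ is canonically identified with $B_\e(x)$ and its quotient-metric with the original metric. Under this identification the map $\pi_{\e,x}\:G_x(y)\mapsto G(y)$ of Corollary~\ref{cor:proj-is-loc-isom} is precisely $y\mapsto G(y)$, i.e. the restriction of $\pi$ to $B_\e(x)$; Corollary~\ref{cor:proj-is-loc-isom} then asserts that this restriction maps $B_\e(x)$ isometrically onto $B_\e\bigl(G(x)\bigr)\ss X/G$, which is the first claim. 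Likewise, for $g\in G$ the point $g(x)$ has trivial stabilizer by hypothesis, so $B_\e\bigl(g(x)\bigr)=g\bigl(B_\e(x)\bigr)$ is canonical with $B_\e\bigl(g(x)\bigr)/G_{g(x)}=B_\e\bigl(g(x)\bigr)$, and the map $g_{\e,x}\:G_x(y)\mapsto G_{g(x)}\bigl(g(y)\bigr)$ becomes $y\mapsto g(y)$, the restriction of the isometry $g$ to $B_\e(x)$. The compatibility $\pi_{\e,x}=\pi_{\e,g(x)}\c g_{\e,x}$ is inherited verbatim from Corollary~\ref{cor:proj-is-loc-isom}, and composing with $\pi^{-1}_{\e,g(x)}$ gives $\pi^{-1}_{\e,g(x)}\c\pi_{\e,x}=g_{\e,x}=g|_{B_\e(x)}$, the last claim.

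I expect no genuine obstacle here: everything is forced once one observes that a trivial group has singleton orbits, so that the quotients $B_\e(x)/G_x$ collapse to $B_\e(x)$ and the abstract maps $\pi_{\e,x}$, $g_{\e,x}$ of Corollary~\ref{cor:proj-is-loc-isom} literally become restrictions of $\pi$ and $g$. The only points deserving a word of care are the bookkeeping of the radius condition $\e<r/4$ and the separate, trivial handling of the case $G=\{\id\}$.
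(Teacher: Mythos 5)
Your proposal is correct and follows essentially the same route as the paper: the paper derives this corollary from Corollary~\ref{cor:proj-is-loc-isom} precisely by observing that trivial stabilizers make every orbit $G_x(y)$ a singleton, so $B_\e(x)/G_x$ is identified with $B_\e(x)$ and the maps $\pi_{\e,x}$, $g_{\e,x}$ become restrictions of $\pi$ and $g$. Your extra bookkeeping on the existence of canonical neighbourhoods and the degenerate case $G=\{\id\}$ is harmless and consistent with the paper's setup.
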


\section{The Canonical Local Isometry}
\markright{\thesection.~The Canonical Local Isometry}

For $n\in\N$ put $\cM_n=\{X\in\cM:\#X\le n\}$ and $\cM_{[n]}=\{X\in\cM:\#X=n\}$, then $\cM_{[1]}=\{\D_1\}$, $\cM_{[2]}$ is isometric to the positive ray on the real line, and $\cM_{[3]}$ is isometric to the set $\{(a,b,c):0<a\le b\le c\le a+b\}$ endowed with the metric generated by the $\ell_\infty$-norm: $\bigl\|(x,y,z)\bigr\|_\infty=\frac12\max\bigl\{|x|,|y|,|z|\bigr\}$ (the latter fact one can find in~\cite{IvaTuzIrreducible}).

For $N=n(n-1)/2$ by $\R^N_\infty$ we denote the arithmetic space $\R^N$ endowed with the $\ell_\infty$-norm: $\bigl\|(x^1,\ldots,x^N)\bigr\|_\infty=\frac12\max_{i=1}^N\bigl\{|x^i|\bigr\}$. The corresponding $\ell_\infty$-distance between points $x,y\in\R^N_\infty$ is denoted by $|xy|_{\infty}$.

Let $X\in\cM_{[n]}$. Enumerate the points of $X$ in an arbitrary way, then $X=\{x_i\}_{i=1}^n$, and let $\r_{ij}=\r_{ji}=|x_ix_j|$ be the components of the corresponding distance matrix $M_X$ of the space $X$. The matrix $M_X$ is uniquely determined by the vector
$$
\r_X=(\r_{12},\ldots,\r_{1n},\r_{23},\ldots,\r_{2n},\ldots,\r_{(n-1)n})\in\R^N.
$$
Notice that the set of all possible $\r_X\in\R^N$, $X\in\cM_{[n]}$, consists of all vectors with positive coordinates, which satisfy the following ``triangle inequalities'': for any pairwise distinct $1\le i,j,k\le N$ we have $\r_{ik}\le\r_{ij}+\r_{jk}$ (here, for convenience, we put $\r_{ij}=\r_{ji}$ for all $i$ and $j$). The set of all such vectors is denoted by $\cC_n$.

If one changes the numeration of points of the space $X$, i.e., if one acts by a permutation $\s\in S_n$ on $X$ by the rule $\s(x_i)=x_{\s(i)}$, then the components of the matrix $M_X$ are permuted as follows: $\r_{ij}\mapsto\s(\r_{ij}):=\r_{\s(i)\s(j)}$. By $M_{\s(X)}$ we denote  the resulting matrix, and by $\r_{\s(X)}$ the corresponding vector is denoted. Thus, an action of the group $S_n$ on $\cC_n$ is defined.

Notice that the action of the group $S_n$ on $\cC_n$ just described consists in permuting of the basis vectors of $\R^N$, therefore, this action can be naturally extended onto the entire $\R^N$, and, thus, $S_n$ generates a subgroup  $G$ of the group $S_N$ consisting of all permutations of the basis vectors of the space $\R^N$. Since the unit ball in $\R^N_\infty$ is a Euclidean cube centered at the origin, and each permutation of the coordinate vectors take this cube into itself, then the group $S_N$, together with its subgroup $G$, acts on $\R^N_\infty$ by isometries. Notice also that, generally speaking, the group $S_N$ does not preserve the cone $\cC_n$, because permutations of general type acting on the set of distances of a metric space $X$ can violate a triangle inequality.

Further, each orbit of the action of the group $G$ on $\R^N$ contains at most $n!$ points, and each \emph{regular orbit}, i.e., the one having trivial stabilizer, consists of $n!$ points exactly. A space $X$ such that the orbit of the corresponding $\r_X$ is regular, together with all the vectors $g(\r_X)$, $g\in G$, we call \emph{regular}.

Notice that $\cC_n$ is not open in $\R^N$: it contains boundary points, namely, those $\r_X$ at which some triangle inequalities degenerate to equalities. Such $X$ and the corresponding $\r_X$ we call \emph{degenerate}, and all the remaining $X$ and $\r_X$ we call \emph{non-degenerate}.

We say that a space $X\in\cM_{[n]}$ and each corresponding $\r_X\in\R^N$ are \emph{generic\/} or are \emph{in general position}, if $X$ is regular and non-degenerate. Thus, $X\in\cM_{[n]}$ is generic, iff its isometry group is trivial and all triangle inequalities hold strictly. Notice that in~\cite{IvaTuzLocalStrEmbed} by generic space we meant a few narrow class of object demanding in addition that all nonzero distances are pairwise different.

Denote by $\cC_n^g$ the subset of $\cC_n$ consisting of all vectors in general position, and by $\cM_{[n]}^g$ the corresponding subset of $\cM_{[n]}$ consisting of all spaces in general position. It is easy to see that the both $\cC_n^g$ and $\cM_{[n]}^g$ are open in $\R^N$ and in $\cM_{[n]}$, respectively; besides that, these subsets are everywhere dense in $\cC_n$ and $\cM_{[n]}$, respectively.

Define a mapping $\Pi\:\cC_n\to\cM_{[n]}\ss\cM$ as  $\Pi\bigl(\r_X\bigr)=X$. Let us investigate the properties of this mapping. As it is shown in~\cite{IvaTuzLocalStrIsom}, for a sufficiently small $\e>0$ and any $Y,Z\in B_\e(X)\ss\cM_{[n]}$ each optimal correspondence $R\in\cR(Y,Z)$ is a bijection. Therefore for such $Y$ and $Z$ it holds
$$
d_{GH}(Y,Z)=\min_{\r_Y,\r_Z}\bigl\{|\r_Y\r_Z|_\infty\bigr\}=\bigl|G(\r_Y)G(\r_Z)\bigr|_\infty,
$$
where in the right hand side of the equality the standard distance between subsets of $\R^N_\infty$ stands, i.e., the infimum (here it is the minimum) of $\R^N_\infty$-distances between their elements.

Thus, we get the following result.

\begin{prop}\label{prop:loc-iso-to-GH}
For any $X\in\cM_{[n]}$ there exists $\e>0$ such that
$$
d_{GH}(Y,Z)=\bigl|\Pi^{-1}(Y)\Pi^{-1}(Z)\bigr|_\infty
$$
for every $Y,Z\in B_\e(X)\ss\cM_{[n]}$.
\end{prop}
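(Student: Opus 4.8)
The plan is to recognize that the assertion is essentially a restatement of the displayed formula immediately preceding it, so the work consists in justifying that formula carefully and then identifying the orbits $G(\r_Y)$ and $G(\r_Z)$ with the fibres $\Pi^{-1}(Y)$ and $\Pi^{-1}(Z)$. First I would fix an enumeration of the points of $X$ and choose $\e>0$ small enough that the cited result from~\cite{IvaTuzLocalStrIsom} applies: for all $Y,Z\in B_\e(X)\ss\cM_{[n]}$ every optimal correspondence $R\in\cR(Y,Z)$ is a bijection. Since $Y$ and $Z$ are finite of equal cardinality, an optimal $R$ exists, so that $d_{GH}(Y,Z)=\tfrac12\dis R$ is realized on a bijection.

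Next I would set up the dictionary between bijections and the group $G$. Fixing representatives $\r_Y\in\Pi^{-1}(Y)$ and $\r_Z\in\Pi^{-1}(Z)$ amounts to fixing enumerations $Y=\{y_i\}$ and $Z=\{z_i\}$; a bijection $Y\to Z$ is then given by a permutation, i.e. by an element $g\in G$, under which the paired distances pit $|y_iy_j|$ against the $g$-permuted entries of $\r_Z$. By the very definition of the norm on $\R^N_\infty$ (which already carries the factor $\tfrac12$) the distortion of this bijective correspondence equals $2\,|\r_Y g(\r_Z)|_\infty$, so that its half equals $|\r_Y g(\r_Z)|_\infty$. Running over all bijections is then the same as running over all $g\in G$.

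For the upper bound, every bijection is an admissible correspondence, so $d_{GH}(Y,Z)\le|\r_Y g(\r_Z)|_\infty$ for each $g$, whence $d_{GH}(Y,Z)\le\min_{g\in G}|\r_Y g(\r_Z)|_\infty$; this step needs no special hypothesis. For the lower bound I would invoke that the optimal correspondence is a bijection: it corresponds to some $g_0\in G$ with $d_{GH}(Y,Z)=|\r_Y g_0(\r_Z)|_\infty\ge\min_{g\in G}|\r_Y g(\r_Z)|_\infty$. Combining the two bounds gives $d_{GH}(Y,Z)=\min_{g\in G}|\r_Y g(\r_Z)|_\infty$. Finally, since $G$ acts on $\R^N_\infty$ by isometries (permutation of coordinate vectors), this minimum is exactly the set-distance $|G(\r_Y) G(\r_Z)|_\infty$, and because $\Pi^{-1}(Y)=G(\r_Y)$ and $\Pi^{-1}(Z)=G(\r_Z)$ are precisely the $G$-orbits, I obtain $d_{GH}(Y,Z)=|\Pi^{-1}(Y)\Pi^{-1}(Z)|_\infty$.

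The only genuine input is the cited local statement that optimal correspondences between spaces close to $X$ are bijections, and I expect that to be the main obstacle: without it the lower bound could in principle be beaten by a non-bijective (splitting or merging) correspondence, and indeed the introduction warns that Gromov--Hausdorff optimal correspondences need not be bijections in the general finite case. Everything else is bookkeeping, namely matching enumerations with elements of $G$ and keeping track of the factor $\tfrac12$ built into the definition of $|\cdot|_\infty$.
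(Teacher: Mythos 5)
Your proposal is correct and follows essentially the same route as the paper, which likewise derives the formula directly from the cited fact of~\cite{IvaTuzLocalStrIsom} that optimal correspondences between spaces in a small ball around $X$ in $\cM_{[n]}$ are bijections, then identifies bijections with elements of $G$ acting on distance vectors and the fibres $\Pi^{-1}(Y)$, $\Pi^{-1}(Z)$ with the orbits $G(\r_Y)$, $G(\r_Z)$. You merely spell out the bookkeeping (the factor $\tfrac12$ in the norm, the two inequalities, the passage from a minimum over $g\in G$ to the set-distance between orbits) that the paper leaves implicit.
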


By Proposition~\ref{prop:compact-group-action}, the action of the group $G$ on $\cC_n$ generates a metric space $\cC_n/G$. Item~(\ref{prop:compact-group-action-stab:3}) of Proposition~\ref{prop:compact-group-action-stab} implies the following statement.

\begin{cor}\label{cor:loc-str-CnG}
For sufficiently small $\e>0$ the ball $B_\e\bigl(G(\r)\bigr)$ in the space $\cC_n/G$ is isometric to the quotient space $\bigl(B_\e(\r)\cap\cC_n\bigr)/G_\r$, where $B_\e(\r)$ is a ball in $\R^N_\infty$, and $G_\r$ is the stabilizer of the point $\r\in\cC_n$ under the group $G$ action.
\end{cor}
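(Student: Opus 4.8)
The plan is to invoke the general machinery of Section~4 on group actions, specialized to the concrete action of $G$ on $\cC_n$, and to check that the hypotheses of Item~(\ref{prop:compact-group-action-stab:3}) of Proposition~\ref{prop:compact-group-action-stab} are met so that the local isometry statement follows. First I would observe that $G$ is a finite group (a subgroup of the symmetric group $S_N$) acting by isometries on the subset $\cC_n$ of the metric space $\R^N_\infty$; the finiteness is crucial because all of Proposition~\ref{prop:compact-group-action-stab} requires a \emph{finite} group. I would emphasize that although $G$ was introduced as acting on all of $\R^N_\infty$, it preserves the cone $\cC_n$ (it merely permutes distance-coordinates, which sends a metric space to an isometric one), so the restricted action on $\cC_n$ is well-defined and by isometries. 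By Proposition~\ref{prop:compact-group-action} this endows $\cC_n/G$ with the quotient-metric, so the statement of the corollary is at least well-posed.

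Next I would fix a point $\r\in\cC_n$ and apply Item~(\ref{prop:compact-group-action-stab:3}) of Proposition~\ref{prop:compact-group-action-stab} to the point $x=\r$. That item furnishes an $\e>0$ such that for any $y_1,y_2$ in the canonical neighbourhood $B_\e(\r)$ (intersected with $\cC_n$, the space on which the action lives), the distance between the full orbits $G(y_1)$ and $G(y_2)$ equals the distance between the partial orbits $G_\r(y_1)$ and $G_\r(y_2)$, where $G_\r$ is the stabilizer. The content of the corollary is exactly this equality read off at the level of quotient spaces: the ball $B_\e\bigl(G(\r)\bigr)$ in $\cC_n/G$ consists of the orbits $G(y)$ with $y$ near $\r$, its quotient-metric is $\bigl(A,B\bigr)\mapsto|AB|$ between orbits, and the claim is that under the natural bijection $G(y)\mapsto G_\r(y)$ this coincides with the quotient-metric on $\bigl(B_\e(\r)\cap\cC_n\bigr)/G_\r$. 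I would verify that $G_\r(y)\mapsto G(y)$ is a well-defined bijection between $\bigl(B_\e(\r)\cap\cC_n\bigr)/G_\r$ and $B_\e\bigl(G(\r)\bigr)$: by the $G\sm G_\r$-separation from Item~(\ref{prop:compact-group-action-stab:2}), every orbit meeting $B_\e(\r)$ meets it exactly in a single $G_\r$-orbit, so no two distinct $G_\r$-orbits in the ball map to the same $G$-orbit, giving injectivity, and surjectivity is immediate. The distance-preservation is then precisely the equality supplied by Item~(\ref{prop:compact-group-action-stab:3}).

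The one subtlety — and the step I expect to require the most care — is that Proposition~\ref{prop:compact-group-action-stab} is stated for a group acting on a metric space $X$, whereas here the action lives on the cone $\cC_n$, which is \emph{not open} in $\R^N_\infty$ and carries boundary (degenerate) points; moreover the corollary phrases the small ball as $B_\e(\r)\cap\cC_n$ with $B_\e(\r)$ taken in the ambient $\R^N_\infty$. I would address this by applying the proposition with the ambient metric space taken to be $\cC_n$ itself (equipped with the restricted $\ell_\infty$-metric), noting that $\cC_n$ is $G$-invariant so the action is genuinely an action on $\cC_n$; the canonical neighbourhood of $\r$ in $\cC_n$ is then exactly $B_\e(\r)\cap\cC_n$, matching the statement. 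The separation argument in Item~(\ref{prop:compact-group-action-stab:2}) used only that the finite set $Z=\{g(\r):g\in G\sm G_\r\}$ stays a positive distance $r$ from $\r$, which holds equally in the subspace $\cC_n$, so shrinking $\e$ below $r/4$ is still available. With these identifications the equality of the two quotient-metrics is immediate from Item~(\ref{prop:compact-group-action-stab:3}), completing the proof.
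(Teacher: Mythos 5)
Your proposal is correct and follows essentially the same route as the paper, which derives this corollary directly from Item~(3) of Proposition~4.4 (the statement on stabilizers and local orbit distances) applied to the finite group $G$ acting by isometries on $\cC_n$. The extra care you take with the bijection $G_\r(y)\mapsto G(y)$ and with restricting the ambient space to the $G$-invariant cone $\cC_n$ only makes explicit what the paper leaves implicit.
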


Combining Proposition~\ref{prop:loc-iso-to-GH} and Corollary~\ref{cor:loc-str-CnG}, we get the following result.

\begin{cor}\label{cor:ConeProjTOnPoints}
The mapping $G(\r_X)\mapsto X$ is a locally isometric homeomorphism between $\cC_n/G$ and $\cM_{[n]}$, therefore, for any $X\in\cM_{[n]}$ and any $\r\in\Pi^{-1}(X)$ there exists $\e>0$ such that the closed ball $B_\e(X)\ss\cM_{[n]}$ is isometric to $\bigl(B_\e(\r)\cap\cC_n\bigr)/G_\r$, where $B_\e(\r)$ is a ball in $\R^N_\infty$, and $G_\r$ is the stabilizer of the point $\r\in\cC_n$ under the group $G$ action.
\end{cor}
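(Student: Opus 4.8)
The plan is to verify that $\phi\colon G(\r_X)\mapsto X$ is a well-defined bijection, then to upgrade it to a local isometry using Proposition~\ref{prop:loc-iso-to-GH}, and finally to feed the resulting local model into Corollary~\ref{cor:loc-str-CnG}. First I would check that $\phi$ is a well-defined bijection between $\cC_n/G$ and $\cM_{[n]}$. The mapping $\Pi\colon\cC_n\to\cM_{[n]}$ is constant on $G$-orbits, since two vectors differing by a permutation of coordinates encode the same distance matrix up to a renumbering of points and hence isometric spaces; conversely, the fiber $\Pi^{-1}(X)$ is exactly the orbit $G(\r_X)$, because every isometric copy of $X$ with its points enumerated yields a vector obtained from $\r_X$ by a permutation. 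Thus $\Pi$ descends to a bijection $\phi\colon\cC_n/G\to\cM_{[n]}$, $G(\r_X)\mapsto X$.

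Second, I would show that $\phi$ is a local isometry. Fix $X\in\cM_{[n]}$ and let $\e>0$ be as in Proposition~\ref{prop:loc-iso-to-GH}. For $Y,Z\in B_\e(X)$ that proposition gives $d_{GH}(Y,Z)=\bigl|\Pi^{-1}(Y)\Pi^{-1}(Z)\bigr|_\infty$, and the right-hand side is precisely the quotient distance $\bigl|G(\r_Y)G(\r_Z)\bigr|$ in $\cC_n/G$ of Proposition~\ref{prop:compact-group-action}, since the orbits lie in $\cC_n\ss\R^N_\infty$ and the distance between them computed as subsets of $\cC_n$ agrees with their $\R^N_\infty$-distance. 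Specializing to $Y=X$ shows that $Z\in B_\e(X)$ if and only if $G(\r_Z)\in B_\e\bigl(G(\r_X)\bigr)$, so $\phi$ carries $B_\e\bigl(G(\r_X)\bigr)$ bijectively onto $B_\e(X)$ and preserves all pairwise distances on it. Hence $\phi$ restricts to an isometry of balls, and being a bijective local isometry it is automatically a homeomorphism (both $\phi$ and $\phi^{-1}$ are locally isometric, hence continuous).

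Finally, I would combine this with Corollary~\ref{cor:loc-str-CnG}. Choose any $\r\in\Pi^{-1}(X)$ and shrink $\e$ so that it serves simultaneously in Proposition~\ref{prop:loc-iso-to-GH} and in Corollary~\ref{cor:loc-str-CnG}. The latter identifies $B_\e\bigl(G(\r)\bigr)$ in $\cC_n/G$ isometrically with $\bigl(B_\e(\r)\cap\cC_n\bigr)/G_\r$; composing with the isometry $B_\e(X)\cong B_\e\bigl(G(\r)\bigr)$ from the previous step yields the desired isometry $B_\e(X)\cong\bigl(B_\e(\r)\cap\cC_n\bigr)/G_\r$. Since $G$ acts by isometries, replacing $\r$ by another representative of its orbit changes neither $B_\e(\r)\cap\cC_n$ up to isometry nor the stabilizer $G_\r$ up to conjugacy, so the choice of $\r$ is immaterial.

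I expect the only genuine friction to be bookkeeping: matching the ball $B_\e(X)\ss\cM_{[n]}$ with the ball $B_\e\bigl(G(\r_X)\bigr)\ss\cC_n/G$ (handled by the $Y=X$ specialization above), reconciling the three descriptions of the metric, namely the $\R^N_\infty$-distance between orbits, the quotient metric on $\cC_n/G$, and the Gromov--Hausdorff metric, and taking $\e$ small enough for both cited results at once. There is no deep obstacle, because the substantive analytic content, that optimal correspondences between nearby $n$-point spaces are bijections and hence $d_{GH}$ locally coincides with the $\ell_\infty$ orbit distance, has already been packaged into Proposition~\ref{prop:loc-iso-to-GH}.
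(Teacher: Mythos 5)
Your proposal is correct and follows exactly the route the paper intends: the paper derives this corollary in one line by ``combining Proposition~\ref{prop:loc-iso-to-GH} and Corollary~\ref{cor:loc-str-CnG}'', and your argument is simply a careful unpacking of that combination (identifying fibers with orbits, matching the ball $B_\e(X)$ with $B_\e\bigl(G(\r_X)\bigr)$ via the $Y=X$ specialization, and composing the two isometries). No discrepancies with the paper's approach.
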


Now we consider different types of the spaces $X\in\cM_{[n]}$: a generic space, a regular degenerate space, a non-regular non-degenerate space, and, at last, a non-regular degenerate space. All the corresponding results listed below follow from Corollary~\ref{cor:ConeProjTOnPoints}.

\paragraph{Generic Spaces.}

Recall that by generic spaces we mean regular nondege\-ne\-rate spaces $X\in\cM_{[n]}$ and corresponding elements from $\cC_n$.

\begin{cor}\label{cor:LocIsomGenPos}
For each generic space $X\in\cM_{[n]}$, for all sufficiently small $\e>0$ the closed ball $B_\e(X)\ss\cM_{[n]}$ lies in $\cM_{[n]}^g$ and is isometric to the ball $B_\e(\r_X)$ in $\R^N_\infty$.
\end{cor}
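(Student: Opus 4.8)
The plan is to derive everything from Corollary~\ref{cor:ConeProjTOnPoints} by specializing its general statement to the case where $X$ is generic. Recall that for a generic space the corresponding vector $\r_X\in\cC_n$ is, by definition, both regular and non-degenerate; I would exploit each of these two properties separately.

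First I would use non-degeneracy. Since all triangle inequalities hold strictly at $\r_X$, the point $\r_X$ is an interior point of the cone $\cC_n$ in $\R^N$, so there is some $\e_1>0$ with $B_{\e_1}(\r_X)\ss\cC_n$, whence $B_\e(\r_X)\cap\cC_n=B_\e(\r_X)$ for all $\e\le\e_1$. In fact, since $\cC_n^g$ is open (non-degeneracy is an open condition, and regularity is open because its complement is the union over the finitely many nontrivial $g\in G$ of the fixed linear subspaces of $g$, each a proper linear subspace of $\R^N$, hence a closed set), I would shrink $\e_1$ further so that $B_{\e_1}(\r_X)\ss\cC_n^g$.

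Next I would use regularity. By definition of a regular space the orbit $G(\r_X)$ is regular, i.e., the stabilizer $G_{\r_X}$ is trivial. Then Corollary~\ref{cor:ConeProjTOnPoints}, applied to $\r=\r_X\in\Pi^{-1}(X)$, provides some $\e_2>0$ such that the closed ball $B_{\e_2}(X)\ss\cM_{[n]}$ is isometric to $\bigl(B_{\e_2}(\r_X)\cap\cC_n\bigr)/G_{\r_X}$. Choosing $\e\le\min\{\e_1,\e_2\}$, and using that $G_{\r_X}$ is trivial together with $B_\e(\r_X)\cap\cC_n=B_\e(\r_X)$, this quotient collapses to $B_\e(\r_X)$ itself, which yields the desired isometry between the closed ball $B_\e(X)\ss\cM_{[n]}$ and the ball $B_\e(\r_X)$ in $\R^N_\infty$.

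Finally, to see that $B_\e(X)\ss\cM_{[n]}^g$, I would note that under the isometry of Corollary~\ref{cor:ConeProjTOnPoints} each point of $B_\e(X)$ is the image $\Pi(\r)$ of some $\r\in B_\e(\r_X)\ss\cC_n^g$, and every vector in $\cC_n^g$ represents a space in general position, so every such $\Pi(\r)$ lies in $\cM_{[n]}^g$. I do not expect a genuine obstacle here: the only point requiring care is to choose $\e$ small enough to satisfy simultaneously the inclusion $B_\e(\r_X)\ss\cC_n^g$ coming from openness and the radius furnished by Corollary~\ref{cor:ConeProjTOnPoints}, which is handled by passing to the minimum of the two radii.
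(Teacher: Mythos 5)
Your proposal is correct and follows exactly the route the paper intends: the paper gives no separate proof of this corollary, stating only that it "follows from Corollary~\ref{cor:ConeProjTOnPoints}", and your specialization (non-degeneracy puts $\r_X$ in the interior of $\cC_n$ so $B_\e(\r_X)\cap\cC_n=B_\e(\r_X)$, regularity makes $G_{\r_X}$ trivial so the quotient collapses, and openness of $\cC_n^g$ gives the inclusion $B_\e(X)\ss\cM_{[n]}^g$) is precisely the routine verification being left to the reader.
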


\paragraph{Regular Degenerate Spaces.}

For $n\ge3$ and any $\r\in\cC_n$ by $D(\r_X)$ we denote the set of all ordered triples of different indices $(i,j,k)$, $1\le i,j,k\le n$, such that $\r_{ij}+\r_{jk}=\r_{ik}$. Notice that $\r$ is degenerate, iff $D(\r)\ne\0$. Further, for nonempty $D(\r)$ by $T(\r)$ we denote the polyhedral cone with the vertex at the origin, which is obtained as the intersection of all half-spaces in $\R^N$ defined by the inequalities $\r_{ij}+\r_{jk}-\r_{ik}\ge0$ over all $(i,j,k)\in D(\r)$. If $D(\r)=\0$, then put $T(\r)=\R^N$. Notice that for a degenerate $\r\in\R^N_\infty$ and any sufficiently small $\e>0$ we have $B_\e(\r)\cap\cC_n=B_\e(\r)\cap T(\r)$.

\begin{cor}\label{cor:LocIsomDegReg}
For each regular degenerate space $X\in\cM_{[n]}$ and for all sufficiently small $\e>0$ the closed ball $B_\e(X)\ss\cM_{[n]}$ is isometric to the intersection $B_\e(\r_X)\cap T(\r_X)$ of the ball $B_\e(\r_X)$ in $\R^N_\infty$ and the cone $T(\r_X)$ defined above.
\end{cor}

We also need the following simple property of the cone $T(\r)$.

\begin{prop}\label{prop:BeCAPTrX-nonempty-inter}
If $\r_X\in\cC_n$ is a vector corresponding to a degenerate space $X\in\cM_{[n]}$, then for any $\e>0$ the set $B_\e(\r_X)\sm\bigl(B_\e(\r_X)\cap T(\r_X)\bigr)$ has a nonempty interior.
\end{prop}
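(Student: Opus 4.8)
The claim is that for a degenerate vector $\r_X\in\cC_n$, removing the cone $T(\r_X)$ from the ball $B_\e(\r_X)$ leaves a set with nonempty interior. Since $\r_X$ is degenerate, the set $D(\r_X)$ is nonempty, so there is at least one triple $(i,j,k)$ with $\r_{ij}+\r_{jk}-\r_{ik}=0$. My plan is to fix one such triple and produce an explicit open subset of $B_\e(\r_X)$ on which the corresponding triangle inequality is \emph{strictly violated}; such points lie in the complement of $T(\r_X)$ (which is the intersection of the closed half-spaces $\r_{ij}+\r_{jk}-\r_{ik}\ge0$), hence in the set in question.

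First I would introduce the linear functional $\ell(v)=v_{ij}+v_{jk}-v_{ik}$ on $\R^N$, where I identify a point $v\in\R^N$ with its coordinates indexed by pairs. By the choice of the triple we have $\ell(\r_X)=0$, and $\ell$ is a nonzero linear functional because the three coordinates $v_{ij},v_{jk},v_{ik}$ are distinct (here $i,j,k$ are pairwise distinct, so the three index-pairs are genuinely different basis directions). Consequently $\ell$ is not identically zero, and its open half-space $\{v:\ell(v)<0\}$ is nonempty and open in $\R^N$.

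Next I would perturb $\r_X$ into this half-space while staying inside the ball. Concretely, pick the direction $e$ that decreases $v_{ik}$ only (i.e., $e=-\,\mathbf{1}_{ik}$, the negative unit vector in the coordinate $ik$), so that $\ell(\r_X+s\,e)=s>0$... wait, I would instead take $e$ to \emph{increase} $v_{ik}$, giving $\ell(\r_X+s\,e)=-s<0$ for $s>0$. For all sufficiently small $s>0$ the point $\r_X+s\,e$ lies in the open ball $U_\e(\r_X)$ and satisfies $\ell<0$, i.e., the triangle inequality for $(i,j,k)$ is strictly violated, so $\r_X+s\,e\notin T(\r_X)$. Since both the open ball and the open half-space $\{\ell<0\}$ are open, their intersection is an open subset of $\R^N$ contained in $B_\e(\r_X)\sm\bigl(B_\e(\r_X)\cap T(\r_X)\bigr)$, which is precisely what must be exhibited. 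The argument works for every $\e>0$ because one only needs $s$ small relative to $\e$.

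The only point requiring a little care — and the main (mild) obstacle — is confirming that the half-space $\{\ell<0\}$ really meets the interior of the ball arbitrarily close to $\r_X$: this is immediate once one observes that $\r_X$ lies on the boundary hyperplane $\{\ell=0\}$ and $e$ points strictly to the $\ell<0$ side, so the whole open segment $\{\r_X+s\,e:0<s<\dl\}$ is interior to $B_\e(\r_X)$ for small $\dl$ and lies in the complement of $T(\r_X)$. No triangle inequality other than the one for $(i,j,k)$ needs to be checked, since violating a single inequality already places the point outside the cone $T(\r_X)$.
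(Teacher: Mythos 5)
Your argument is correct and follows essentially the same route as the paper: both single out one degenerate triple $(i,j,k)$, note that $T(\r_X)$ lies in the closed half-space $\r_{ij}+\r_{jk}-\r_{ik}\ge0$ whose bounding hyperplane passes through $\r_X$, and conclude that the ball $B_\e(\r_X)$ must protrude into the complementary open half-space. The only cosmetic difference is that the paper justifies this last step by the central symmetry of the cube and the hyperplane about $\r_X$, whereas you exhibit an explicit perturbation direction; the two justifications are interchangeable.
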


\begin{proof}
Indeed, since $X$ is a degenerate space, then $T(\r_X)$ is contained in a half-space $\Theta$ bounded by a hyperplane $\theta$ of the form $\r_{ij}+\r_{jk}-\r_{ik}=0$ passing through $X$. Since the both cube $B_\e(\r_X)$ and hyperplane $\theta$ are centrally symmetric with respect to $X$, then $B_\e(\r_X)\sm(B_\e(\r_X)\cap\Theta)$ contains interior points. It remains to note that $T(\r_X)\ss\Theta$.
\end{proof}

\paragraph{Non-regular Non-degenerate Spaces.}

\begin{cor}\label{cor:LocIsomNonDegNonReg}
For each non-regular non-degenerate space $X\in\cM_{[n]}$, for a sufficiently small $\e>0$ the closed ball $B_\e(X)\ss\cM_{[n]}$ is isometric to the space $B_\e(\r_X)/G_{\r_X}$ obtained from the ball $B_\e(\r_X)$ in $\R^N_\infty$ by factorisation over action of the stabilizer $G_{\r_X}$ of the point $\r_X$.
\end{cor}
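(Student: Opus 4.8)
The plan is to deduce this directly from Corollary~\ref{cor:ConeProjTOnPoints}, which already supplies, for a sufficiently small $\e>0$, an isometry between $B_\e(X)\ss\cM_{[n]}$ and $\bigl(B_\e(\r_X)\cap\cC_n\bigr)/G_{\r_X}$. Thus it suffices to show that non-degeneracy of $X$ lets us discard the intersection with $\cC_n$, i.e.\ that $B_\e(\r_X)\cap\cC_n=B_\e(\r_X)$ for all sufficiently small $\e$.

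First I would observe that $\r_X$ is an interior point of the cone $\cC_n$. Indeed, $\cC_n$ is cut out of $\R^N$ by the finitely many conditions $\r_{ij}>0$ together with the triangle inequalities $\r_{ik}\le\r_{ij}+\r_{jk}$ over all distinct triples of indices. Since $X\in\cM_{[n]}$, all coordinates of $\r_X$ are strictly positive; and since $X$ is non-degenerate, we have $D(\r_X)=\0$, so every triangle inequality holds strictly at $\r_X$. Hence $\r_X$ satisfies only finitely many strict inequalities, each an open condition, so a whole neighbourhood of $\r_X$ in $\R^N$ is contained in $\cC_n$.

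Consequently I would choose $\e>0$ small enough that the conclusion of Corollary~\ref{cor:ConeProjTOnPoints} holds and, simultaneously, $B_\e(\r_X)\ss\cC_n$; the latter is possible precisely because $\r_X$ is interior. For such $\e$ we have $B_\e(\r_X)\cap\cC_n=B_\e(\r_X)$, and so the isometry supplied by Corollary~\ref{cor:ConeProjTOnPoints} identifies $B_\e(X)$ with $B_\e(\r_X)/G_{\r_X}$, as required. Here $G_{\r_X}$ genuinely acts on the whole ball $B_\e(\r_X)$, since each element of the stabilizer fixes $\r_X$ and acts on $\R^N_\infty$ by an isometry permuting coordinates, hence preserves every ball centred at $\r_X$ (cf.\ Item~(\ref{prop:compact-group-action-stab:1}) of Proposition~\ref{prop:compact-group-action-stab}).

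I expect no serious obstacle: the entire content is the elementary remark that a non-degenerate distance vector lies in the interior of $\cC_n$, so that locally the cone is indistinguishable from all of $\R^N_\infty$ and only the stabilizer action survives, which is now non-trivial because $X$ is non-regular. This is exactly what separates the present case from the regular degenerate case of Corollary~\ref{cor:LocIsomDegReg}, where $\r_X$ sits on the boundary of $\cC_n$ and the intersection with the cone $T(\r_X)$ cannot be removed, and from the generic case of Corollary~\ref{cor:LocIsomGenPos}, where in addition $G_{\r_X}$ is trivial and no factorisation is needed.
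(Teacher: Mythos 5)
Your proposal is correct and follows exactly the route the paper intends: the paper derives this corollary (without writing out details) directly from Corollary~\ref{cor:ConeProjTOnPoints}, and the only point needing verification is the one you supply, namely that non-degeneracy places $\r_X$ in the interior of $\cC_n$ (all defining inequalities strict, hence open conditions), so that $B_\e(\r_X)\cap\cC_n=B_\e(\r_X)$ for small $\e$ and only the nontrivial stabilizer action remains. Your filled-in argument is sound and matches the paper's approach.
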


\paragraph{Non-regular Degenerate Spaces.}

Now, let $X\in\cM_{[n]}$ be a non-regular degenerate space, then the stabilizer $G_{\r_X}$ is nontrivial, and the cone $T(\r_X)$ differs from the entire space. Notice that each motion $g\in G_{\r_X}$ takes $T(\r_X)$ into itself. Indeed, since $g(\r_X)=\r_X$, then the set of degenerate triangles in $X$ is mapped into itself by any permutation $g$ of points of the space $X$; this proves the invariance of $T(\r_X)$. Recall that for small $\e>0$ it holds $B_\e(\r_X)\cap T(\r_X)=B_\e(\r_X)\cap\cC_n$. Thus, for sufficiently small $\e>0$ the stabilizer $G_{\r_X}$ acts on the set $B_\e(\r_X)\cap T(\r_X)=B_\e(\r_X)\cap\cC_n$.

\begin{cor}\label{cor:LocIsomDegNonReg}
For each non-regular degenerate space $X\in\cM_{[n]}$, for all sufficiently small $\e>0$ the closed ball $B_\e(X)\ss\cM_{[n]}$ is isometric to the space $\bigl[B_\e(\r_X)\cap T(\r_X)\bigr]/G_{\r_X}$ obtained from the intersection of the ball $B_\e(\r_X)$ in $\R^N_\infty$ with the cone $T(\r_X)$ by factorisation over action of the stabilizer $G_{\r_X}$ of the point $\r_X$.
\end{cor}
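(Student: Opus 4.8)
The plan is to reduce the statement directly to Corollary~\ref{cor:ConeProjTOnPoints}, which already provides, for any $X\in\cM_{[n]}$ and any $\r\in\Pi^{-1}(X)$, a threshold below which $B_\e(X)\ss\cM_{[n]}$ is isometric to $\bigl(B_\e(\r)\cap\cC_n\bigr)/G_\r$. First I would specialize $\r=\r_X$, so that the target of the isometry becomes $\bigl(B_\e(\r_X)\cap\cC_n\bigr)/G_{\r_X}$, and then rewrite the slice $B_\e(\r_X)\cap\cC_n$ in terms of the polyhedral cone $T(\r_X)$. The whole content of the corollary is thus packaged in this local replacement of $\cC_n$ by $T(\r_X)$.

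The key local identity to establish is $B_\e(\r_X)\cap\cC_n=B_\e(\r_X)\cap T(\r_X)$ for all sufficiently small $\e>0$, which has already been recorded in the text immediately preceding the statement. I would justify it as follows: the cone $\cC_n$ is cut out of $\R^N$ by the finitely many triangle inequalities $\r_{ij}+\r_{jk}-\r_{ik}\ge0$; at $\r_X$ precisely those indexed by $D(\r_X)$ are equalities, while every remaining triangle inequality is strict. By continuity each strict inequality persists on a whole ball $B_\e(\r_X)$ once $\e$ is small enough, so on such a ball the only constraints distinguishing $\cC_n$ are the degenerate ones, which are exactly the defining inequalities of $T(\r_X)$. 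This yields the desired equality of slices, and in fact an equality of subsets of $\R^N_\infty$, so the two quotient metrics coincide literally rather than merely up to isometry.

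Finally I would check that the factorization is legitimate, i.e., that $G_{\r_X}$ acts on $B_\e(\r_X)\cap T(\r_X)$. This is immediate from the remark preceding the statement: since each $g\in G_{\r_X}$ fixes $\r_X$, it permutes the degenerate triples $D(\r_X)$ among themselves and therefore maps $T(\r_X)$ onto itself; being an isometry of $\R^N_\infty$ it also preserves $B_\e(\r_X)$. Substituting $B_\e(\r_X)\cap T(\r_X)$ for $B_\e(\r_X)\cap\cC_n$ in the conclusion of Corollary~\ref{cor:ConeProjTOnPoints} then gives the claimed isometry between $B_\e(X)$ and $\bigl[B_\e(\r_X)\cap T(\r_X)\bigr]/G_{\r_X}$.

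I would not expect a genuine obstacle here, since both the local structure theorem and the cone description are already available. The only point requiring a little care is making the two smallness conditions on $\e$ compatible---one coming from Corollary~\ref{cor:ConeProjTOnPoints} and one from the persistence of the strict triangle inequalities---but since each holds for all sufficiently small $\e$, passing to their minimum suffices.
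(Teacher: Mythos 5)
Your proposal is correct and follows essentially the same route as the paper: the paper derives this corollary directly from Corollary~\ref{cor:ConeProjTOnPoints}, using the previously recorded identity $B_\e(\r_X)\cap\cC_n=B_\e(\r_X)\cap T(\r_X)$ for small $\e$ and the observation that each $g\in G_{\r_X}$ permutes the degenerate triples and hence preserves $T(\r_X)$. Your continuity argument for the persistence of the strict triangle inequalities (and, implicitly, of the positivity of the coordinates) is exactly the justification the paper leaves tacit.
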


\subsection{More on Generic Spaces}

Now, let us apply Corollary~\ref{cor:proj-is-loc-isom-triv-stab}.

\begin{cor}\label{cor:proj-metric-props}
For any $X\in\cM_{[n]}^g$ there exists $\e>0$ such that
\begin{enumerate}
\item For any $\r\in\Pi^{-1}(X)$ the ball $B_\e(\r)$ in $\R^N_\infty$ lies entirely in $\cC_n^g$, and the set $\Pi^{-1}\bigl(B_\e(X)\bigr)$ equals to disjoint union of the balls $\bigl\{B_\e(\r)\bigr\}_{\r\in\Pi^{-1}(X)}$.
\item The restriction $\pi_{\e,\r}\:B_\e(\r)\to B_\e(X)$ of the projection $\Pi$ is an isometry.
\item The restriction $g_{\e,\r}\:B_\e(\r)\to B_\e\bigl(g(\r)\bigr)$ of the mapping $g\in G$ is also an isometry.
\item The mappings $\pi_{\e,\r}$ and $\pi_{\e,g(\r)}$ are agreed with each other in the following sense\/\rom: $\pi_{\e,\r}=\pi_{\e,g(\r)}\c g_{\e,\r}$, thus each mapping $\pi^{-1}_{\e,g(\r)}\c\pi_{\e,\r}$ coincides with the restriction of the mapping $g\in G$ onto the ball $B_\e(\r)$.
\end{enumerate}
\end{cor}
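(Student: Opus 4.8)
The plan is to recognize this corollary as the specialization of Corollary~\ref{cor:proj-is-loc-isom-triv-stab} to the finite group $G$ acting by permutations of the coordinates on the open set $\cC_n^g$, on which — by the very definition of a generic space — every stabilizer is trivial. First I would record the orbit structure: since $X$ is generic, $\r_X$ is regular and non-degenerate, so the preimage $\Pi^{-1}(X)=\{g(\r_X):g\in G\}$ is a \emph{regular} orbit, consisting of exactly $n!$ pairwise distinct points, all lying in the open set $\cC_n^g$. The group $G$ permutes coordinates, hence acts on $\R^N_\infty$ by isometries preserving $\cC_n^g$ (regularity and non-degeneracy are permutation-invariant conditions), and all stabilizers on $\cC_n^g$ are trivial. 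Thus $\cC_n^g$ together with the restricted $G$-action satisfies the hypotheses of Corollary~\ref{cor:proj-is-loc-isom-triv-stab}.

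Next I would fix $\e>0$ as the minimum of finitely many smallness bounds, all realizable simultaneously because $\Pi^{-1}(X)$ is finite: (a) small enough that each ball $B_\e(\r)$, $\r\in\Pi^{-1}(X)$, is contained in the open set $\cC_n^g$, so that the ball taken in $\R^N_\infty$ coincides with the ball taken inside the metric space $\cC_n^g$; (b) small enough that the $n!$ balls $B_\e(\r)$ are pairwise disjoint, which is exactly the canonical-neighbourhood condition $\e<r/4$ of Item~(\ref{prop:compact-group-action-stab:2}) of Proposition~\ref{prop:compact-group-action-stab} applied to the separated finite orbit; and (c) small enough that Proposition~\ref{prop:loc-iso-to-GH} holds in a neighbourhood of $X$, i.e. $d_{GH}(Y,Z)=\bigl|\Pi^{-1}(Y)\Pi^{-1}(Z)\bigr|_\infty$. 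Each $B_\e(\r)$ is then a canonical neighbourhood in the sense of Corollary~\ref{cor:proj-is-loc-isom-triv-stab}, and by Corollary~\ref{cor:LocIsomGenPos} the whole configuration stays inside $\cC_n^g$.

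For Item~1 I would argue as follows. A vector $\r\in\cC_n$ lies in $\Pi^{-1}\bigl(B_\e(X)\bigr)$ iff $d_{GH}\bigl(\Pi(\r),X\bigr)\le\e$; by the choice (c) this equals the orbit-distance $\bigl|G(\r)\,G(\r_X)\bigr|_\infty$, which is $\le\e$ precisely when $\r$ is $\e$-close in $\R^N_\infty$ to some point of the orbit $G(\r_X)=\Pi^{-1}(X)$. Hence $\Pi^{-1}\bigl(B_\e(X)\bigr)=\bigcup_{\r\in\Pi^{-1}(X)}B_\e(\r)$, and the union is disjoint by the separation condition (b). This gives both assertions of Item~1. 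For Items~2--4 I would simply transcribe Corollary~\ref{cor:proj-is-loc-isom-triv-stab}: the restriction $\pi_{\e,\r}$ is the canonical projection $\cC_n^g\to\cC_n^g/G$ restricted to the canonical neighbourhood $B_\e(\r)$, composed with the local isometry $\cC_n^g/G\to\cM_{[n]}^g$ furnished by Corollary~\ref{cor:ConeProjTOnPoints}; since both factors are isometries, $\pi_{\e,\r}$ is an isometry onto $B_\e(X)$ (Item~2). The isometry property of $g_{\e,\r}$ (Item~3) and the compatibility relation $\pi_{\e,\r}=\pi_{\e,g(\r)}\c g_{\e,\r}$ (Item~4) are then exactly the corresponding clauses of Corollary~\ref{cor:proj-is-loc-isom-triv-stab}.

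The main obstacle is not any of the individual clauses, which are formal once the setup is in place, but rather the identification of the two metrics: one must justify that the restriction of $\Pi$ to a single sheet $B_\e(\r)$ is an isometry for the \emph{Gromov--Hausdorff} metric on its image $B_\e(X)$, whereas Corollary~\ref{cor:proj-is-loc-isom-triv-stab} only delivers an isometry onto a ball in the quotient $\cC_n^g/G$ endowed with the quotient-metric coming from $\R^N_\infty$. This gap is bridged by Proposition~\ref{prop:loc-iso-to-GH}, which asserts that locally near a finite space the Gromov--Hausdorff distance coincides with the $\ell_\infty$ orbit-distance; the remaining care is purely bookkeeping, namely coordinating the finitely many smallness requirements on $\e$ so that (a), (b) and (c) hold at once.
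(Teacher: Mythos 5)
Your proposal is correct and takes essentially the same route as the paper, which presents this corollary as a direct application of Corollary~\ref{cor:proj-is-loc-isom-triv-stab} (the trivial-stabilizer case) to the $G$-action on $\cC_n^g$, with the identification of the quotient metric and the Gromov--Hausdorff metric supplied by Proposition~\ref{prop:loc-iso-to-GH} and Corollary~\ref{cor:ConeProjTOnPoints}. Your explicit coordination of the finitely many smallness conditions on $\e$ and the derivation of Item~1 from the orbit-distance formula simply fill in details the paper leaves implicit.
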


\begin{dfn}
We call \emph{canonical} each neighbourhood $B_\e(X)$ from Corollary~\ref{cor:proj-metric-props}, together with all neighbourhoods $B_\e(\r)$.
\end{dfn}

\begin{prop}\label{prop:CngLInearConnected}
The subsets $\cC_n^g\ss\R^N$ are path-connected for all $n\ne3$\rom; moreover, each pair of points in $\cC_n^g$ can be connected by a polygonal line lying in $\cC_n^g$. For $n=3$ the subset $\cC_n^g\ss\R^3$ is not path-connected. The subsets $\cM_{[n]}^g\ss\cM_{[n]}$ are path-connected for all $n$.
\end{prop}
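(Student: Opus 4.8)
\section*{Proof proposal for Proposition~\ref{prop:CngLInearConnected}}

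The plan is to realize $\cC_n^g$ as an explicit open convex set with a ``singular locus'' removed, treat the regimes $n\le2$, $n\ge4$, and $n=3$ separately, and then deduce the statement about $\cM_{[n]}^g$ by pushing forward along $\Pi$. First I would fix notation. Let $\cC_n^{nd}\ss\R^N$ denote the set of vectors with all coordinates positive and all triangle inequalities strict; being an intersection of open half-spaces it is open and convex, hence polygonally connected, and it is nonempty for every $n\ge2$ (for instance the regular-simplex vector $\r\equiv1$ lies in it). Write $\cN=\bigcup_{g\in G,\,g\ne e}\operatorname{Fix}(g)$ for the non-regular locus, where $\operatorname{Fix}(g)=\{\r\in\R^N:g(\r)=\r\}$ is a linear subspace. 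By the definitions of non-degenerate and regular we then have $\cC_n^g=\cC_n^{nd}\sm\cN$. The cases $n=1,2$ are immediate: there $G$ is trivial (for $n=2$ the element of $S_2$ permutes a single coordinate), so $\cN=\0$ and $\cC_n^g=\cC_n^{nd}$ is convex.

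For $n\ge4$ the key point is a codimension estimate. The codimension of $\operatorname{Fix}(g)$ in $\R^N$ equals the number of coordinates moved by $g$ minus the number of nontrivial cycles of $g$ acting on the $N=\binom n2$ index pairs; a direct count shows this is minimized by a transposition $\s=(ab)\in S_n$, whose induced action splits the moved pairs into the $n-2$ swaps $\{a,k\}\leftrightarrow\{b,k\}$, $k\ne a,b$, giving codimension exactly $n-2\ge2$. Hence every $\operatorname{Fix}(g)$ with $g\ne e$ has codimension at least $2$. Now, given $p,q\in\cC_n^g$, I would connect them by a two-segment polygonal line $p\to m\to q$: for any $m\in\cC_n^{nd}$ both segments stay in $\cC_n^{nd}$ by convexity, so it remains to choose $m$ avoiding $\cN$ on both segments. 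For each linear subspace $L\ss\cN$, any $m$ with $[p,m]\cap L\ne\0$ satisfies $m\in p+\R_{>0}(\ell-p)$ for some $\ell\in L$, hence lies in the affine hull of $\{p\}\cup L$, of dimension $\le\dim L+1\le N-1$ (here $p\notin L$ since $p\in\cC_n^g$); similarly for $[m,q]$. As $\cN$ is a finite union of such $L$, the bad choices of $m$ form a finite union of proper affine subspaces, so a good $m$ exists densely in the open set $\cC_n^{nd}$. This yields a polygonal line in $\cC_n^g$, proving polygonal connectivity.

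For $n=3$ I would show the opposite. Here $\R^N=\R^3$ with coordinates $(a,b,c)=(\r_{12},\r_{13},\r_{23})$, and the three transpositions of $S_3$ have fixed hyperplanes $\{a=b\}$, $\{b=c\}$, $\{a=c\}$ (while the two $3$-cycles fix only the line $a=b=c$, which is contained in those hyperplanes). Thus $\cC_3^g=\cC_3^{nd}\sm(\{a=b\}\cup\{b=c\}\cup\{a=c\})$ is exactly the union of the six nonempty open convex chambers cut out by the strict orderings of $a,b,c$; any path between two different chambers must, by the intermediate value theorem, meet one of the three hyperplanes and hence leave $\cC_3^g$, so $\cC_3^g$ is disconnected.

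Finally, for $\cM_{[n]}^g=\Pi(\cC_n^g)$: when $n\ne3$ it is the continuous image of the path-connected set $\cC_n^g$, hence path-connected. When $n=3$ the group $G\cong S_3$ acts as the full symmetric group on the coordinates $a,b,c$, hence permutes the six chambers simply transitively, and every regular $G$-orbit meets each chamber exactly once; therefore the restriction of $\Pi$ to a single (convex, hence path-connected) chamber already surjects onto $\cM_{[3]}^g$, which is thus path-connected. I expect the main obstacle to be the codimension bound for $n\ge4$ together with the genericity argument guaranteeing the intermediate vertex $m$, and, on the $n=3$ side, the verification that the three separating walls coincide with the non-regular loci so that they are genuinely deleted from $\cC_3^g$.
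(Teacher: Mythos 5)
Your proposal is correct and follows essentially the same route as the paper: convexity of the non-degenerate locus, a codimension-$\ge2$ bound on the non-regular locus enabling a generic two-segment polygonal detour for $n\ge4$, the chamber/intermediate-value argument showing $\cC_3^g$ is disconnected, and passage to $\cM_{[n]}^g$ via continuity of $\Pi$ (respectively, via convexity of a single ordered chamber for $n=3$, which is exactly the paper's parametrization by $0<a<b<c<a+b$). The one step you assert rather than prove is that the fixed-subspace codimension is minimized by a transposition; the paper instead checks directly that every non-identity permutation imposes at least two independent linear conditions on $\r$, which is all that is actually needed.
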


\begin{proof}
If $n=1$ or $n=2$, then $\cC_n^g=\cC_n$ and $M_{[n]}^g=M_{[n]}$, thus the path-connectivity follows from the above remarks.

Let $n=3$. Show that $\cC_n^g$ is not path-connected. Take, for instance, two points $\r_0=(3,4,5)$ and $\r_1=(4,3,5)\in \cC_3^g$, and suppose that there exists a continuous curve $\r_t=\big(\r_{12}(t),\r_{13}(t),\r_{23}(t)\big)$, $t\in[0,1]$, that lies in $\cC_3^g$ and connects these points. Then the continuous function $f(t)=\r_{12}(t)-\r_{13}(t)$ satisfies $f(0)<0$ and $f(1)>0$, therefore there exists $s\in(0,1)$ such that $\r_{12}(s)=\r_{13}(s)$. But then the stabilizer of the point $\r_s$ is nontrivial, thus $\r_s\not\in C_3^g$.

Now, show that $M_{[3]}^g$ is path-connected. To start with, notice that a triple of real numbers $a\le b\le c$ are the lengths of a triangle $X\in\cM_{[3]}^g$, iff $0<a<b<c<a+b$. Choose $X_0, X_1\in\cM_{[3]}^g$, and let $0<a_i<b_i<c_i<a_i+b_i$ be nonzero distances in $X_i$. Then for each $t\in[0,1]$ the triple $\{a_t=(1-t)a_0+t\,a_1,\,b_t=(1-t)b_0+t\,b_1,\,c_t=(1-t)c_0+t\,c_1\}$ also satisfies $0<a_t<b_t<c_t<a_t+b_t$ and, thus, it generates a metric space $X_t$ belonging to $\cM_{[3]}^g$. It is easy to see that $t\mapsto X_t$ is a continuous curve in $\cM_{[3]}^g$, therefore, $\cM_{[3]}^g$ is path-connected.

Consider the case $n\ge4$. Notice that the cone $\cC_n$ is convex, because it is the intersection of half-spaces corresponding to the positivity conditions of metric components, and to triangle inequalities. This implies that for any $\r_0,\r_1\in \cC_n$ the segment $\r_t=(1-t)\r_0+t\,\r_1$, $t\in[0,1]$, belongs to $\cC_n$. Further, if $\r_0,\r_1\in\cC_n$ are non-degenerate, then all $\r_t$ are non-degenerate as well. Thus, the set of all non-degenerate vectors $\r\in\cC_n$ is convex. Moreover, the set of all non-degenerate vectors $\r\in\cC_n$ is open and everywhere dense in $\cC_n$.

Now, let us investigate the structure of the set of all non-regular $\r\in\cC_n$. The condition of non-regularity of $\r\in\cC_n$ means that there exists a non-identical transformation $\s\in S_n$, such that $\s(\r)=\r$. Put $X=\Pi(\r)$ and let $\r=\r_X$ for some numeration $X=\{x_1,\ldots,x_n\}$ of points of the $X$, i.e., $\r_{ij}=|x_ix_j|$. Since the permutation $\s$ is not identical, then there exists $i\in\{1,\ldots,n\}$ such that $j=\s(i)\ne i$. Since $n\ge4$, then there exist at least two distinct $p,q\in\{1,\ldots,n\}$ different from $i$ such that $r=\s(p)\ne i$, $s=\s(q)\ne i$. This implies that $\{i,p\}\ne\{j,r\}$ and $\{i,q\}\ne\{j,s\}$, therefore, since $\s\bigl(\r_{ip}\bigr)=\r_{jr}$ and $\s\bigl(\r_{iq}\bigr)=\r_{js}$, the condition $\s(\r)=\r$ implies at least two non-identical conditions, namely, $\r_{ip}=\r_{jr}$ and $\r_{iq}=\r_{js}$.  Moreover, by assumption $i$ differs from $j$, $r$, and $s$, therefore all the four pairs $\{i,p\}$, $\{i,q\}$, $\{j,r\}$, and $\{j,s\}$ are pairwise distinct, and hence these two conditions are independent.  Therefore, the set of the vectors $\r\in\cC_n$ such that $\s(\r)=\r$, consists of subsets of a finite number of linear subspaces in $\R^N$ of codimension at least $2$. Those linear subspaces we call \emph{irregularity subspaces}.

Take two arbitrary $\r_1,\r_2\in\cC_n^g$, and for $\r_1$ and each irregularity subspace consider their linear hull. We get a collection of subspaces of nonzero codimensions. This implies that the union $W$ of those subspaces does not cover any open set in $\R^N$. Thus, since $\cC_n^g$ is open, there exists $\r'_2\in U_\e(\r_2)\ss\cC_n^g$, which does not belong to $W$. Therefore, the segment $[\r_1,\r'_2]$ does not intersect $W$, and, thus, the polygonal line $\r_1\r'_2\r_2$ does not intersect $W$ as well. This completes the proof that $\cC_n^g$ is path-connected and that each two its points can be connected by a polygonal line lying in $\cC_n^g$. Since the path-connectivity is preserved under continuous mappings, the set $\cM_{[n]}^g$ is path-connected also.
\end{proof}

\subsection{Coverings and Generic Spaces}
By $\Pi^g\:\cC_n^g\to\cM_{[n]}^g$ we denote the restriction of the mapping $\Pi\:\cC_n\to\cM_{[n]}$ onto $\cC_n^g$. Recall a definition of covering, see~\cite{FFG} for details.

Let $T$ and $B$ be path-connected topological spaces, $F$ be a discrete topological space, $n=\#F$. Then each continuous surjective mapping $\pi\:T\to B$ is called an \emph{$n$-sheeted covering with the total space $T$, the base $B$, and the fiber $F$}, if each point $b\in B$ has a neighborhood $U$ such that $\pi^{-1}(U)$ is homeomorphic to $U\x F$, and if $\v\:\pi^{-1}(U)\to U\x F$ is the corresponding homeomorphism, and $\pi_1\:U\x F\to U$ is the projection, $\pi_1\:(u,f)\mapsto u$, then $\pi=\pi_1\c\v$ (the corresponding diagram is commutative). If we omit the path-connectivity condition, then the mapping $\pi$ is called a \emph{covering in the broad sense}.

\begin{cor}\label{cor:loc-iso-covering}
The mapping $\Pi^g\:\cC_n^g\to\cM_{[n]}^g$ is an $n!$-sheeted locally isometric covering\/ \(in a broad sense for $n=3$, because $\cC_3$ is not path-connected\/\).
\end{cor}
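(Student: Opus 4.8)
The plan is to read the statement off directly from Corollary~\ref{cor:proj-metric-props}, which supplies the local trivializations, and from Proposition~\ref{prop:CngLInearConnected}, which supplies the path-connectivity of the base and of the total space. First I would record the two soft facts making $\Pi^g$ a covering candidate: it is surjective, since every generic $X\in\cM_{[n]}^g$ equals $\Pi(\r_X)$ for the generic vector $\r_X\in\cC_n^g$, and it is locally isometric, hence continuous, by the second item of Corollary~\ref{cor:proj-metric-props}. That same local isometry is precisely the ``locally isometric'' clause in the conclusion, so nothing further is required for it.

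Next I would fix the fiber. Every $X\in\cM_{[n]}^g$ is generic, hence regular, so its stabilizer $G_{\r_X}$ is trivial and the orbit $G(\r_X)=\Pi^{-1}(X)$ is free, consisting of exactly $\#G=n!$ points. Choosing a basepoint $\r_0\in\Pi^{-1}(X)$, the assignment $g\mapsto g(\r_0)$ is then a bijection from $G$ onto $\Pi^{-1}(X)$, so the discrete fiber $F$ may be taken to be $G$ with $\#F=n!$. Because \emph{every} point of $\cM_{[n]}^g$ is regular, this count is uniform over the base.

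Then I would construct the trivialization. For a fixed generic $X$ choose $\e>0$ as in Corollary~\ref{cor:proj-metric-props} and work over the open neighbourhood $U=U_\e(X)$. The first item of that corollary writes $\Pi^{-1}\bigl(B_\e(X)\bigr)$ as the disjoint union of the closed balls $B_\e(\r)$, $\r\in\Pi^{-1}(X)$, each lying in $\cC_n^g$. Since each $\pi_{\e,\r}$ is an isometry $B_\e(\r)\to B_\e(X)$ sending $\r$ to $X$, a point $y\in B_\e(\r)$ satisfies $\Pi(y)\in U$ iff $|y\,\r|_\infty<\e$, i.e.\ iff $y\in U_\e(\r)$; hence $\Pi^{-1}(U)$ is the disjoint union of the open balls $U_\e(\r)$, which are open and pairwise disjoint, therefore clopen in $\Pi^{-1}(U)$. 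I would then define $\v\:\Pi^{-1}(U)\to U\x F$ by $\v(y)=\bigl(\Pi(y),\r\bigr)$ for $y\in U_\e(\r)$; on each sheet $\v$ is the isometry $\pi_{\e,\r}$ paired with a constant, so $\v$ is a homeomorphism, and $\pi_1\c\v=\Pi^g$ is exactly the commutativity demanded of an $n!$-sheeted covering.

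Finally I would invoke path-connectivity: by Proposition~\ref{prop:CngLInearConnected} the base $\cM_{[n]}^g$ is path-connected for all $n$, while the total space $\cC_n^g$ is path-connected for $n\ne3$ but fails to be for $n=3$. Thus for $n\ne3$ all requirements of the strict definition hold, and for $n=3$ the conclusion holds only in the broad sense, as stated. There is no deep obstacle in this corollary---the substance sits in Corollary~\ref{cor:proj-metric-props} and Proposition~\ref{prop:CngLInearConnected}---and the only points needing care are confirming that the sheet count is exactly $n!$ (which rests on regularity forcing a trivial stabilizer, hence a free orbit) and the small topological passage from the closed-ball statement of Corollary~\ref{cor:proj-metric-props} to an open $U$ for which the sheets $U_\e(\r)$ are genuinely clopen, so that $\v$ is a homeomorphism and not merely a bijection.
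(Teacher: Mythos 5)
Your proposal is correct and follows exactly the route the paper intends: the paper states this corollary without proof as an immediate consequence of Corollary~\ref{cor:proj-metric-props} (local trivialization by disjoint isometric balls, with the $n!$ sheet count coming from regularity forcing a free orbit) and Proposition~\ref{prop:CngLInearConnected} (path-connectivity, with the $n=3$ exception). You have simply written out the details the authors leave implicit, including the passage from closed to open balls, and nothing in that elaboration is problematic.
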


We use Corollary~\ref{cor:loc-iso-covering} for constructing the lift of paths.

\begin{prop}[Lifting of paths~\cite{FFG}]\label{prop:th-cov-homot}
Let $\pi\:T\to B$ be an arbitrary covering in a broad sense, $\g\:[a,b]\to B$ be a continuous mapping\/ \(a path in $B$\), and $t\in T$ be an arbitrary point in $\pi^{-1}\bigl(\g(a)\bigr)$. Then there exists unique  continuous mapping $\G\:[a,b]\to T$, such that $\G(a)=t$ and $\g=\pi\c\G$.
\end{prop}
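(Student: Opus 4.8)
The plan is to exploit compactness of the domain $[a,b]$ to reduce the global lifting problem to finitely many local lifts over evenly-covered neighbourhoods, where the covering is by definition a trivial product $U\x F$, glue these local lifts together to obtain $\G$, and then establish uniqueness by a connectedness argument. First I would cover the relevant part of $B$ by evenly-covered neighbourhoods: for each $s\in[a,b]$ the point $\g(s)\in B$ admits a neighbourhood $U_s$ with a homeomorphism $\v_s\:\pi^{-1}(U_s)\to U_s\x F$ satisfying $\pi=\pi_1\c\v_s$. The preimages $\g^{-1}(U_s)$ form an open cover of the compact interval $[a,b]$; applying the Lebesgue number lemma I would produce a partition $a=s_0<s_1<\cdots<s_k=b$ such that each $\g\bigl([s_{i-1},s_i]\bigr)$ lies in a single evenly-covered neighbourhood $U_i$.

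Next I would build $\G$ inductively on the successive pieces $[s_{i-1},s_i]$. Assume $\G$ has already been defined and is continuous on $[a,s_{i-1}]$ with $\G(s_{i-1})\in\pi^{-1}\bigl(\g(s_{i-1})\bigr)$, the base of the induction being $\G(a)=t$. Since $\g(s_{i-1})\in U_i$, the point $\G(s_{i-1})$ lies in exactly one sheet $U_i\x\{f\}$ of the product decomposition of $\pi^{-1}(U_i)$; on $[s_{i-1},s_i]$ I would then set $\G(s)=\v_i^{-1}\bigl(\g(s),f\bigr)$. This map is continuous, agrees at $s_{i-1}$ with the previously constructed portion of $\G$, and satisfies $\pi\c\G=\g$ by the compatibility $\pi=\pi_1\c\v_i$ built into the definition of a covering. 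Iterating over $i=1,\ldots,k$ and invoking the pasting lemma at the junction points $s_i$ (where the two adjacent pieces coincide) yields a continuous lift $\G\:[a,b]\to T$ with the required properties.

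For uniqueness I would take two lifts $\G_1,\G_2$ with $\G_1(a)=\G_2(a)=t$ and study the agreement set $S=\{s\in[a,b]:\G_1(s)=\G_2(s)\}$, which is nonempty as it contains $a$. Working locally over an evenly-covered neighbourhood $U$ of $\g(s)$, where the fibre $F$ is discrete, each lift lands in a single sheet near $s$; hence either the two sheets coincide, forcing $\G_1=\G_2$ on a whole neighbourhood of $s$ (so $S$ is open), or the sheets differ, forcing $\G_1\ne\G_2$ on a whole neighbourhood of $s$ (so the complement of $S$ is open). Thus $S$ is clopen, and since $[a,b]$ is connected and $S\ne\0$, we conclude $S=[a,b]$, i.e.\ $\G_1=\G_2$.

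The main obstacle I anticipate is not any single deep step but the careful bookkeeping in the uniqueness half: one must avoid invoking Hausdorffness of the total space $T$ (which is not assumed in a covering in the broad sense) and instead deduce that $S$ is simultaneously open and closed purely from the local product structure and the discreteness of the fibre $F$. The existence half is essentially routine once the Lebesgue-number partition is in hand; the only mild care needed there is verifying continuity of the glued map at the finitely many partition points.
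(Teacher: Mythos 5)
The paper does not prove this statement at all: it is quoted as a standard result on lifting of paths with a citation to Fomenko--Fuchs, so there is no in-paper argument to compare against. Your proof is the classical textbook argument (Lebesgue number partition, inductive sheet-by-sheet construction, and uniqueness via the clopen agreement set using only discreteness of the fibre rather than Hausdorffness of $T$), and it is correct as written.
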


\begin{dfn}
The mappings $\G$ from Proposition~\ref{prop:th-cov-homot} is called the \emph{lift of $\g$}.
\end{dfn}

\section{Invariancy of $\cM_{[n]}^g$}
\markright{\thesection.~Invariancy of $\cM_{[n]}^g$}

In this Section we prove that the sets $\cM_{[n]}^g$ are invariant under any isometry of the space $\cM$. To do that, we use the technique elaborated above together with the invariancy of the Hausdorff measure under isometries. Recall the corresponding concepts and facts.

Let $X$ be an arbitrary set, and $2^X$ be the set of all subsets of $X$.

\begin{dfn}
An {\em outer measure\/} on the set $X$ is a mapping $\mu\:2^X\to[0,+\infty]$ such that
\begin{enumerate}
\item $\mu(\0)=0$;
\item for any at most countable family $\cC$ of subsets of $X$ and any $A\ss X$ such that $A\ss\cup_{B\in\cC}B$, it holds $\mu(A)\le\sum_{B\in\cC}\mu(B)$ (subadditivity).
\end{enumerate}
\end{dfn}

\begin{dfn}
A subset $A\ss X$ is called \emph{measurable with respect to $\mu$}, or simply {\em $\mu$-measurable\/} (\emph{in the sense of Carath\'eo\-dory\/}), if for any $Y\ss X$ it holds $\mu(Y)=\mu(Y\cap A)+\mu(Y\sm A)$.
\end{dfn}

\begin{dfn}
A family $\cS$ of subsets of $X$ is called a {\em $\s$-algebra on $X$}, if it contains $\0$, $X$, and it is closed under taking the complement and countable union operations.
\end{dfn}

It is well-known that for any outer measure $\mu$ on a set $X$ the set of all $\mu$-measurable subsets of $X$ is a $\s$-algebra. Also it is well-known that the intersection of any $\s$-algebras is a $\s$-algebra. The latter allows to define the smallest $\s$-algebra containing a given family of subsets of $X$.

If $X$ is a topological space, then the smallest $\s$-algebra containing the topology is called a \emph{Borel $\s$-algebra}, and its elements are called \emph{Borel sets}. An outer measure $\mu$ on a topological space is said to be \emph{Borel}, if all Borel sets are $\mu$-measurable. An outer measure $\mu$ on a topological space $X$ is said to be \emph{Borel regular}, if it is  Borel and for any set $A\ss X$ the value $\mu(A)$ is equal to the infimum of the values $\mu(B)$ over all Borel sets $B\sp A$.

Let $X$ be an arbitrary metric space. For our purposes it suffices to define the Hausdorff measure upto a multiplicative constant. For the standard definition of this measure see, for instance~\cite{BurBurIva}.

\begin{dfn}
For $\dl>0$ and $A\ss X$, a family $\{A_i\}_{i\in I}$ of subsets of $X$ is called a \emph{$\dl$-covering of the set $A$}, if $A\ss\cup_{i\in I}A_i$ and $\diam A_i<\dl$ for all $i\in I$ (if $A_i=\0$, then put $\diam A_i=0$).
\end{dfn}

\begin{dfn}\label{dfn:HausdorffMeasure}
For any $\dl>0$, $k>0$, and $A\ss X$ put
\begin{align}
&\label{align:Hk:1}H^k_\dl(A)=\inf\biggl\{\sum_{i=1}^\infty(\diam A_i)^k\,:\,\text{$\{A_i\}_{i=1}^\infty$ is a $\dl$-covering of $A$}\biggr\},\\
&\label{align:Hk:2}H^k(A)=\sup_{\dl>0}H^k_\dl(A).
\end{align}
\end{dfn}

The next results are well-known see, for example~\cite{BurBurIva}.

\begin{prop}\label{prop:first-prop-Hausdorff-mes}
For any $k>0$ and any positive integer $N$ the following conditions hold.
\begin{enumerate}
\item\label{prop:first-prop-Hausdorff-mes:1} For any metric space $X$ the functions $H^k$ are Borel regular outer measures on $X$.
\item\label{prop:first-prop-Hausdorff-mes:2} If $f\:X\to Y$ is an isometry of arbitrary metric spaces, then $H^k\bigl(f(A)\bigr)=H^k(A)$ for any subset $A\ss X$.
\item\label{prop:first-prop-Hausdorff-mes:3} In any $N$-dimensional normed space, the $H^N$-measure of a unit ball is nonzero and finite, thus, the $H^N$-measure of any bounded subset with nonempty interior is nonzero and finite.
\end{enumerate}
\end{prop}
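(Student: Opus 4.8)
The plan is to verify the three items by the classical arguments of geometric measure theory; none requires ideas beyond elementary measure theory together with the Carath\'eodory criterion for metric outer measures. For Item~(\ref{prop:first-prop-Hausdorff-mes:1}) I would proceed in three steps. First, for each fixed $\dl>0$ the set function $H^k_\dl$ is an outer measure: $H^k_\dl(\0)=0$ is witnessed by the empty covering, and subadditivity follows by concatenating $\dl$-coverings of the covering sets. Since $H^k_\dl(A)$ increases as $\dl$ decreases, the supremum $H^k=\sup_{\dl>0}H^k_\dl$ is a monotone limit of outer measures, hence itself an outer measure (take the supremum over $\dl$ in $H^k_\dl(A)\le\sum_i H^k_\dl(B_i)\le\sum_i H^k(B_i)$). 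Second, I would show $H^k$ is a \emph{metric} outer measure, i.e. $H^k(A\cup B)=H^k(A)+H^k(B)$ whenever $\dist(A,B)=d>0$: the inequality $\le$ is subadditivity, while for $\ge$ one notes that for $\dl<d$ no set of diameter $<\dl$ can meet both $A$ and $B$, so every $\dl$-covering of $A\cup B$ splits into a covering of $A$ and a covering of $B$, giving $H^k_\dl(A\cup B)\ge H^k_\dl(A)+H^k_\dl(B)$; passing to the supremum over $\dl$ finishes the step, and by the Carath\'eodory criterion a metric outer measure makes every Borel set measurable. Third, for Borel regularity I would use $\diam\bar E=\diam E$ to assume all covering sets are closed; for $A$ with $H^k(A)<\infty$ pick, for each $i$, a closed $1/i$-covering $\{E_{ij}\}_j$ of $A$ with $\sum_j(\diam E_{ij})^k\le H^k(A)+1/i$, and set $B=\cap_i\bigcup_j E_{ij}$. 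Then $B$ is Borel, $A\ss B$, and each family $\{E_{ij}\}_j$ is a $\dl$-covering of $B$ once $1/i<\dl$, whence $H^k(B)\le H^k(A)$; with monotonicity this gives $H^k(B)=H^k(A)$ (and $B=X$ handles the infinite case).

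Item~(\ref{prop:first-prop-Hausdorff-mes:2}) is essentially immediate: an isometry preserves diameters, so a $\dl$-covering $\{A_i\}$ of $A$ corresponds to the $\dl$-covering $\{f(A_i)\}$ of $f(A)$ with identical diameters (and conversely, intersecting a $\dl$-covering of $f(A)$ with $f(X)$ does not increase diameters). Hence $H^k_\dl\bigl(f(A)\bigr)=H^k_\dl(A)$ for every $\dl$, and taking the supremum over $\dl$ yields $H^k\bigl(f(A)\bigr)=H^k(A)$.

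For Item~(\ref{prop:first-prop-Hausdorff-mes:3}) I would prove the two bounds separately in a fixed $N$-dimensional normed space, comparing $H^N$ with Lebesgue volume $\vol$ (all norms on $\R^N$ being equivalent, the covering count may be carried out against the Euclidean structure). For finiteness, the unit ball can be covered by at most $C\dl^{-N}$ sets of diameter $<\dl$ obtained by subdividing a surrounding cube, each contributing a term comparable to $\dl^N$, so $H^N_\dl$ of the ball is bounded uniformly in $\dl$ and $H^N$ of the ball is finite. For positivity, any set $E$ of diameter $d$ lies in a ball of radius $d$ about any of its points, so $\vol(E)\le\om_N d^N$ with $\om_N=\vol$ of the unit ball; therefore for any $\dl$-covering $\{A_i\}$ of the unit ball $B$ one has $\vol(B)\le\sum_i\vol(A_i)\le\om_N\sum_i(\diam A_i)^N$, giving $\sum_i(\diam A_i)^N\ge\vol(B)/\om_N>0$ and hence $H^N(B)\ge\vol(B)/\om_N>0$. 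The concluding ``thus'' clause then follows from monotonicity of $H^N$, since a bounded set with nonempty interior contains a small ball and is contained in a large one, squeezing its measure between two positive finite numbers. The only genuinely nonroutine inputs are the Carath\'eodory criterion in Item~(\ref{prop:first-prop-Hausdorff-mes:1}) and the volume comparison here, and I expect the main obstacle to be precisely this positivity estimate, as it is the one place where one must leave the formal covering calculus and invoke a real geometric inequality relating diameter to volume.
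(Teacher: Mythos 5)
Your proof is correct; the paper itself gives no argument for this proposition, merely labelling it as well known and citing Burago--Burago--Ivanov, and the three arguments you supply (the Carath\'eodory criterion for the metric outer measure $H^k$ plus the closed-cover construction for Borel regularity, the diameter-preservation argument for isometry invariance, and the two-sided comparison of $H^N$ with Lebesgue measure via cube subdivision and the diameter--volume inequality) are exactly the standard ones that reference would give. Nothing further is needed.
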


\begin{prop}\label{prop:decreasing-Hk}
Suppose that a compact group $G$ acts continuously by isometries on a metric space $X$, and let $Y$ be a subset of $X$ that is invariant with respect to the group $G$ action. Suppose also that
\begin{enumerate}
\item\label{prop:decreasing-Hk:1} for some $k>0$ we have $H^k(Y)\in(0,\infty)$\rom;
\item\label{prop:decreasing-Hk:2} there exist $g\in G$ and $A\ss Y$ such that $H^k(A)>0$ and $A\cap g(A)=\0$.
\end{enumerate}
Then $H^k(Y/G)<H^k(Y)$.
\end{prop}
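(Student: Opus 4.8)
The plan is to base everything on the canonical projection $\pi\colon Y\to Y/G$, $\pi(y)=G(y)$, which by the very definition of the quotient metric satisfies $|\pi(y_1)\pi(y_2)|=\inf\{|g'(y_1)\,g''(y_2)|:g',g''\in G\}\le|y_1y_2|$, i.e.\ $\pi$ is distance-nonincreasing. A distance-nonincreasing map carries every $\delta$-covering of a set to a $\delta$-covering of its image without enlarging diameters, so $H^k(\pi(S))\le H^k(S)$ for each $S\subseteq Y$; in particular $H^k(Y/G)\le H^k(Y)$. Everything then reduces to upgrading this to a strict inequality, and the whole content of the hypothesis $A\cap g(A)=\0$ is to force $\pi$ to be genuinely many-to-one on a set of positive measure.

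First I would extract the multiplicity gain. Put $\bar A=\pi(A)$ and let $N(z)=\#\pi^{-1}(z)$ be the number of points in the orbit $z$. For every $z\in\bar A$ the orbit $z$ meets $A$ in some point $a$ and meets $g(A)$ in some point $g(a')$; since $A\cap g(A)=\0$ these two points are distinct, so $N(z)\ge 2$ on $\bar A$, while $N\ge 1$ everywhere on $Y/G$. Next I would convert this multiplicity into a measure gap by the zero-codimensional coarea (Eilenberg) inequality applied to the $1$-Lipschitz map $\pi$, namely $\int^{*}_{Y/G}N(z)\,dH^k(z)\le H^k(Y)$. Combining with $N\ge 1$ on $Y/G$ and $N\ge 2$ on $\bar A$ yields $H^k(Y)\ge H^k(Y/G)+H^k(\bar A)$, so the proof collapses to the single point $H^k(\bar A)=H^k\big(\pi(A)\big)>0$.

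In the situation in which this proposition is used --- a finite group acting on a ball of $\R^N_\infty$, where by the canonical-neighbourhood results $\pi$ is a local isometry off a lower-dimensional singular set --- this is immediate: $\pi$ then sends positive-measure sets to positive-measure sets, and the coarea inequality holds with constant $1$, so $H^k(\pi(A))>0$ and strictness follows.

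The hard part will be exactly this last point, $H^k(\pi(A))>0$, in full generality. Two genuine obstacles appear. First, $A$ is only assumed to have positive outer measure and may fail to be measurable, so one cannot pass to a Borel subset of positive measure; a Vitali-type $A$ shows that merely deleting $g(A)$ from $Y$ need not decrease $H^k$, which is why the naive bound $H^k(Y/G)\le H^k(Y\setminus g(A))$ obtained from $\pi(Y)=\pi(Y\setminus g(A))$ is too weak. Second, if the orbits of $G$ are infinite then $N(z)=\infty$ on a set carrying all the measure, which by the same coarea inequality forces $H^k(Y/G)=0<H^k(Y)$ and makes strictness automatic; so the multiplicity argument above is really needed only on the finite-orbit part, where the local-isometry description of the quotient does the work. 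Organising these three regimes --- the $H^k$-null singular set, the finite-orbit regular part on which $\pi$ is locally measure-multiplying, and the genuinely infinite-orbit part --- is the technical core of the argument.
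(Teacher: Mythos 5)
The paper states this proposition without proof (it is lumped in with the ``well-known'' facts preceding it), so there is no in-paper argument to compare yours against; judged on its own, your proposal has a genuine gap exactly where you locate it, and that gap cannot be closed along the route you chose. Your coarea computation gives $H^k(Y)\ge H^k(Y/G)+H^k\bigl(\pi(A)\bigr)$, which is correct but useless when $H^k\bigl(\pi(A)\bigr)=0$ --- and this can genuinely happen even with $H^k(A)>0$, since $\pi$ is merely $1$-Lipschitz and may collapse $k$-dimensional measure (rotate an annulus onto a segment). In those cases $H^k(Y/G)<H^k(Y)$ still holds, but for reasons your inequality does not see; the ``three regimes'' you propose to organise are an unexecuted programme rather than a proof, and the infinite-orbit regime in particular is not handled by anything you wrote down.

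The missing idea is much more elementary: upgrade the hypothesis to a \emph{Borel} set first, after which the ``naive bound'' you dismissed works verbatim. Since $A\cap g(A)=\0$, no point of $A$ is fixed by $g$, so $A\ss U:=\{y\in Y:g(y)\ne y\}$; the set $U$ is open in $Y$, hence $H^k$-measurable, and $H^k(U)\ge H^k(A)>0$. Each $y\in U$ has a relatively open ball $W=U_r(y)\cap U$ with $r<|y\,g(y)|/3$, which satisfies $W\cap g(W)=\0$; since $H^k(Y)<\infty$ forces $Y$ to be separable, countably many such $W$ cover $U$, and by subadditivity one of them, call it $B$, has $H^k(B)>0$. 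Now $B$ is Borel, $B\cap g(B)=\0$, and every orbit meeting $g(B)$ meets $B$ via $g^{-1}$, so $\pi(Y)=\pi\bigl(Y\sm g(B)\bigr)$ and hence
$H^k(Y/G)\le H^k\bigl(Y\sm g(B)\bigr)=H^k(Y)-H^k\bigl(g(B)\bigr)=H^k(Y)-H^k(B)<H^k(Y)$,
using measurability of the Borel set $g(B)$, finiteness of $H^k(Y)$, and isometry-invariance of $H^k$. This needs no Eilenberg inequality and works for any group acting by isometries; note also that in the paper's only application of the proposition the set $A$ is an open ball, so even there the measurability worry that motivated your detour never arises.
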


\begin{prop}\label{prop:GenPosIsoInvar}
Let $f\:\cM\to\cM$ be an arbitrary isometry, then $\cM_{[n]}^g=f(\cM_{[n]}^g)$.
\end{prop}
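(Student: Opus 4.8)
The plan is to characterise the generic spaces inside $\cM_{[n]}$ by a metric invariant that every self-isometry must preserve, namely the $N$-dimensional Hausdorff measure of a small ball in $\cM_{[n]}$, where $N=n(n-1)/2$. First I would record that, by Corollary~\ref{cor:invarFiniteSpaces}, an isometry $f\:\cM\to\cM$ satisfies $f(\cM_{[n]})=\cM_{[n]}$, so $f$ restricts to a bijective self-isometry of the metric space $\cM_{[n]}$; in particular it carries each closed ball $B_\e(X)\ss\cM_{[n]}$ onto the ball $B_\e\bigl(f(X)\bigr)\ss\cM_{[n]}$ of the same radius. Since $H^N$ is an intrinsic metric invariant preserved by isometries (Item~(\ref{prop:first-prop-Hausdorff-mes:2}) of Proposition~\ref{prop:first-prop-Hausdorff-mes}), it follows that $H^N\bigl(B_\e(f(X))\bigr)=H^N\bigl(B_\e(X)\bigr)$ for all $X\in\cM_{[n]}$ and all sufficiently small $\e>0$, all measures being computed inside $\cM_{[n]}$.

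Next I would compute $H^N\bigl(B_\e(X)\bigr)$ for small $\e$ in each of the four local models. If $X$ is generic, then by Corollary~\ref{cor:LocIsomGenPos} the ball $B_\e(X)$ is isometric to the full ball $B_\e(\r_X)$ in $\R^N_\infty$, whose $H^N$-measure is a finite positive constant $V_n(\e)$ depending only on $n$ and $\e$ (Item~(\ref{prop:first-prop-Hausdorff-mes:3}) of Proposition~\ref{prop:first-prop-Hausdorff-mes}), independent of the center. For the remaining three types I would establish the strict inequality $H^N\bigl(B_\e(X)\bigr)<V_n(\e)$. In the regular degenerate case, Corollary~\ref{cor:LocIsomDegReg} gives $B_\e(X)$ isometric to $B_\e(\r_X)\cap T(\r_X)$; by Proposition~\ref{prop:BeCAPTrX-nonempty-inter} the complement $B_\e(\r_X)\sm\bigl(B_\e(\r_X)\cap T(\r_X)\bigr)$ has nonempty interior, hence positive $H^N$-measure, so the intersection has measure strictly below $V_n(\e)$. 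In the non-regular non-degenerate case, Corollary~\ref{cor:LocIsomNonDegNonReg} gives $B_\e(X)$ isometric to $B_\e(\r_X)/G_{\r_X}$, and applying Proposition~\ref{prop:decreasing-Hk} with $Y=B_\e(\r_X)$ and $G=G_{\r_X}$ yields $H^N(Y/G)<H^N(Y)=V_n(\e)$. Finally, in the non-regular degenerate case the model $\bigl[B_\e(\r_X)\cap T(\r_X)\bigr]/G_{\r_X}$ from Corollary~\ref{cor:LocIsomDegNonReg} is a $1$-Lipschitz image of $B_\e(\r_X)\cap T(\r_X)$ under the quotient projection, so its measure is at most $H^N\bigl(B_\e(\r_X)\cap T(\r_X)\bigr)<V_n(\e)$.

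The conclusion then follows at once: for a generic $X$ pick $\e$ smaller than the thresholds furnished by the local models at both $X$ and $f(X)$; then $H^N\bigl(B_\e(f(X))\bigr)=H^N\bigl(B_\e(X)\bigr)=V_n(\e)$, which by the dichotomy above forces $f(X)$ to be generic, so $f(\cM_{[n]}^g)\ss\cM_{[n]}^g$. Applying the same argument to $f^{-1}$ gives the reverse inclusion, hence $f(\cM_{[n]}^g)=\cM_{[n]}^g$. The main obstacle is the non-regular case, where one must verify the hypotheses of Proposition~\ref{prop:decreasing-Hk}: for a nontrivial $g\in G_{\r_X}$, which acts as a nontrivial coordinate permutation of $\R^N_\infty$ fixing $\r_X$, the fixed locus is a proper linear subspace, so one selects a small ball $A\ss B_\e(\r_X)$ around a point moved by $g$ with $A\cap g(A)=\0$ and $H^N(A)>0$, exactly as the proposition requires.
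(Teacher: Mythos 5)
Your proposal is correct and follows essentially the same route as the paper: compare the $H^N$-measure of small balls in $\cM_{[n]}$ using the four local models (full $\ell_\infty$-ball, cone intersection, quotient by the stabilizer, and the combination), invoking Proposition~\ref{prop:BeCAPTrX-nonempty-inter} and Proposition~\ref{prop:decreasing-Hk} exactly where the paper does. If anything, you are slightly more complete: the paper dismisses the non-regular degenerate case as ``a combination of the above arguments,'' whereas your $1$-Lipschitz quotient-projection observation handles it explicitly and cleanly.
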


\begin{proof}
Choose an arbitrary $X\in\cM_{[n]}^g$ and let $Y=f(X)$. At first suppose that  $Y$ is a regular degenerate space, then, by Corollaries~\ref{cor:LocIsomGenPos} and~\ref{cor:LocIsomDegReg}, there exists $\e>0$ such that the ball $B_\e(X)\ss\cM_{[n]}$ is isometric to $B_\e(\r_X)\ss \R^N_\infty$, and the ball $B_\e(Y)$ is isometric to the intersection $B_\e(\r_Y)\cap T(\r_Y)$ of the ball $B_\e(\r_Y)\ss\R^N_\infty$ and the cone $T(\r_Y)$. Since the translations in $\R^N_\infty$ are isometries, then, by Items~(\ref{prop:first-prop-Hausdorff-mes:2}) and~(\ref{prop:first-prop-Hausdorff-mes:3}) of Proposition~\ref{prop:first-prop-Hausdorff-mes}, we have
$$
H^N\bigl(B_\e(\r_Y)\bigr)=H^N\bigl(B_\e(\r_X)\bigr)=H^N\bigl(B_\e(X)\bigr)=H^N\bigl(B_\e(Y)\bigr)=H^N\bigl(B_\e(\r_Y)\cap T_{\r_Y}\bigr)>0.
$$
By Proposition~\ref{prop:BeCAPTrX-nonempty-inter} and Item~(\ref{prop:first-prop-Hausdorff-mes:3}) of Proposition~\ref{prop:first-prop-Hausdorff-mes}, is holds
$$
H^N\Bigl(B_\e(\r_Y)\sm\bigl(B_\e(\r_Y)\cap T(\r_Y)\bigr)\Bigr)>0,
$$
therefore, since the outer measure $H^N$ is a Borel one, we get $H^N\bigl(B_\e(\r_Y)\cap T(\r_Y)\bigr)<H^N\bigl(B_\e(\r_Y)\bigr)$, a contradiction. Thus, the balls $B_\e(X)$ and $B_\e(Y)$ are not isometric, so $Y$ cannot be a regular degenerate space.

Next, let $Y$ be a non-regular non-degenerate space. Then, by Corollary~\ref{cor:LocIsomNonDegNonReg}, the ball $B_\e(Y)$ is isometric to $B_\e(\r_Y)/G_{\r_Y}$, where $G_{\r_Y}$ is the stabilizer of the point $\r_Y$, which is a nontrivial group, because the space $Y$ is non-regular. Let $g\in G_{\r_Y}$ be an element different from the unity. Since the generic spaces are everywhere dense in $\cM_{[n]}$, there exists $\r_Z\in U_\e(\r_Y)$ corresponding to a generic space $Z\in\cM_{[n]}$. Since the stabilizer of the point $\r_Z$ is trivial, Item~(\ref{prop:compact-group-action-stab:2}) of Proposition~\ref{prop:compact-group-action-stab} implies that there exists $\dl>0$ such that $U_\dl(\r_Z)\ss U_\e(\r_Y)$ and $g\bigl(U_\dl(\r_Z)\bigr)\cap U_\dl(\r_Z)=\0$. However, $U_\dl(\r_Z)$ is an open ball in $\R^N_\infty$,  therefore, by Item~(\ref{prop:first-prop-Hausdorff-mes:3}) of Proposition~\ref{prop:first-prop-Hausdorff-mes}, we have $0<H^N\bigl(U_\dl(\r_Z)\bigr)<\infty$. Further, by Item~(\ref{prop:compact-group-action-stab:1}) of Proposition~\ref{prop:compact-group-action-stab}, it holds $g\bigl(U_\dl(\r_Z)\bigr)\ss U_\e(\r_Y)$, thus, by Item~(\ref{prop:decreasing-Hk:2}) of Proposition~\ref{prop:decreasing-Hk} we conclude that $H^N\bigl(B_\e(\r_Y)/G_{\r_Y}\bigr)<H^N\bigl(B_\e(\r_Y)\bigr)$ and, so,
$$
H^N\bigl(B_\e(Y)\bigr)=H^N\bigl(B_\e(\r_Y)/G_{\r_Y}\bigr)<H^N\bigl(B_\e(\r_Y)\bigr)=H^N\bigl(B_\e(\r_X)\bigr)= H^N\bigl(B_\e(X)\bigr).
$$
Thus, $Y$ cannot be a non-regular non-degenerate space.

The case of a non-regular degenerate space $Y$ can be proceeded by a combination of the above arguments.
\end{proof}

\section{Local Affinity Property}
\markright{\thesection.~Local Affinity Property}

In 1968~\cite{John} F.~John obtained a generalization of the Mazur--Ulam Theorem~\cite{MazurUlam} on affinity property of isometries of normed vector spaces.

\begin{prop}[\cite{John}, Theorem IV, p. 94]\label{prop:John}
Let $U\ss X$ be a connected open subset of a real complete normed space $X$, and $h\:U\to W$ be an isometry that maps $U$ onto an open subset $W$ of a real complete normed space $Y$. Then $h$ is the restriction of an affine isometry $H\:X\to Y$.
\end{prop}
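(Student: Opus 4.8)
The plan is to reduce the global assertion to a purely local one --- that $h$ preserves midpoints of sufficiently close pairs of points --- and then to propagate this along the connected set $U$. Since $h$ is an isometry it is injective, and because it carries $U$ onto $W$ it is a bijective isometry $h\:U\to W$ between open sets; in particular $h^{-1}\:W\to U$ is an isometry as well. Fix a base point $x_*\in U$ and choose $\eta>0$ so small that $B_{4\eta}(x_*)\subset U$ and $B_{4\eta}\bigl(h(x_*)\bigr)\subset W$; this is possible because $U$ and $W$ are open and $h$ is continuous. I will show that $h$ is affine on $B_\eta(x_*)$.

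The core is a localized version of the Mazur--Ulam midpoint construction. Given $a,b\in B_\eta(x_*)$ put $m=\tfrac12(a+b)$, $a'=h(a)$, $b'=h(b)$, and let $\sigma(z)=a+b-z$ be the point reflection through $m$. Define the nested metric midpoint sets inside $X$ by
$$
H_0=\Bigl\{z\in X:\|z-a\|=\|z-b\|=\tfrac12\|a-b\|\Bigr\},\qquad
H_{k+1}=\Bigl\{z\in H_k:\|z-w\|\le\tfrac12\diam H_k\ \text{for all }w\in H_k\Bigr\},
$$
and define the analogous sets $H_0',H_1',\dots$ in $Y$ from $a',b'$. Since $\sigma$ is an isometric involution fixing $m$ and mapping each $H_k$ onto itself, one checks by induction that $m\in H_k$, that $\diam H_{k+1}\le\tfrac12\diam H_k$, and hence that $\bigcap_k H_k=\{m\}$ (completeness of $X$ guarantees here that a nested sequence of nonempty closed sets with vanishing diameters has a one-point intersection); the same holds in $Y$ with the midpoint $m'=\tfrac12(a'+b')$. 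The decisive point is that every $z\in H_0$ satisfies $\|z-m\|\le\tfrac12\|a-b\|<2\eta$, so $H_0\subset B_{3\eta}(x_*)\subset U$; likewise every $z'\in H_0'$ satisfies $\|z'-h(m)\|\le\|a-b\|<4\eta$, so $H_0'\subset B_{4\eta}\bigl(h(x_*)\bigr)\subset W$. Consequently the sets $H_k$ computed in $X$ coincide with those computed in the subset $U$, and the $H_k'$ computed in $Y$ coincide with those computed in $W$; since $h$ is a bijective isometry $U\to W$ it maps $H_k$ onto $H_k'$ for every $k$, and passing to the intersection gives $h(m)=m'$, that is, $h\bigl(\tfrac12(a+b)\bigr)=\tfrac12\bigl(h(a)+h(b)\bigr)$.

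Thus $h$ is midpoint-preserving on the convex ball $B_\eta(x_*)$. Because $h$ is continuous, midpoint preservation forces $h$ to be affine there: iterating the midpoint identity over dyadic parameters and using continuity yields $h\bigl((1-t)a+tb\bigr)=(1-t)h(a)+t\,h(b)$ for all $t\in[0,1]$, so on $B_\eta(x_*)$ we have $h(x)=h(x_*)+L_{x_*}(x-x_*)$ for a map $L_{x_*}$ that is additive, homogeneous, and norm-preserving, i.e.\ a linear isometric embedding. Finally I globalize: the linear part $L_{x_*}$ is determined by the restriction of $h$ to any neighbourhood of $x_*$, so on the overlap of two such balls the two linear parts agree; since $U$ is connected, a standard open--closed argument shows that a single linear isometry $L\:X\to Y$ satisfies $h(x)=h(x_0)+L(x-x_0)$ throughout $U$ (for a fixed base point $x_0$). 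As $h(U)=W$ is open, $L$ carries an open set onto an open set and is therefore surjective, so $L$ is a surjective linear isometry and $H(x):=h(x_0)+L(x-x_0)$ is an affine isometry of $X$ onto $Y$ restricting to $h$.

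The main obstacle is the bookkeeping in the second paragraph: the reflection $\sigma$ and the metric midpoint sets are a priori global objects, and the argument only works because these sets shrink toward $m$ and hence, for $a,b$ close enough, stay inside $U$ --- and, crucially, because their images stay inside $W$ even though we do not yet know that $h(m)$ equals $m'$. Securing the inclusion $H_0'\subset W$ uniformly, before the midpoint property is available, is the delicate step, and it is precisely what forces the smallness conditions on $\eta$ together with the openness of $W$ and completeness of the ambient spaces.
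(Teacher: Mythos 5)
The paper does not prove this proposition at all: it is quoted from F.~John's 1968 paper (Theorem IV) and used as a black box, so there is no internal proof to compare yours against. Your argument is a correct, self-contained proof --- essentially the localized Mazur--Ulam midpoint construction, close in spirit to Mankiewicz's proof of the extension theorem for isometries of open connected sets. The key points check out: $H_0\subset B_{3\eta}(x_*)\subset U$ and $H_0'\subset W$, so the nested midpoint sets and their images live entirely inside the domains where $h$ and $h^{-1}$ are defined; the reflection $\sigma$ forces $m\in\bigcap_k H_k$ while the diameter-halving makes the intersection a singleton; and the open--closed argument on connected $U$ glues the local affine representations into one affine map. Three small inaccuracies, none fatal: (i) for $z\in H_0$ the correct bound is $\|z-m\|\le\|a-b\|$, not $\tfrac12\|a-b\|$ (the conclusion $H_0\subset B_{3\eta}(x_*)$ survives); (ii) the containment $H_0'\subset W$ is cleaner via $\|z'-a'\|=\tfrac12\|a'-b'\|<\eta$ together with $\|a'-h(x_*)\|<\eta$ than via the point $h(m)$, whose location is exactly what is being determined; (iii) completeness is not actually needed to identify $\bigcap_k H_k$, since $m$ is already exhibited as a member and the vanishing diameters make the intersection at most one point. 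Finally, the surjectivity of $L$ at the end deserves the one-line remark you implicitly use: an affine subspace of $Y$ containing the nonempty open set $W$ must be all of $Y$.
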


Proposition~\ref{prop:John} implies that all the isometries of the space $\R^d_\infty$ are affine. Describe these isometries in more derails.

\begin{prop}\label{prop:max-isom}
Let $h\:\R^d_\infty\to\R^d_\infty$ be an affine isometry. Then $h(x)=(S\cdot P)x+b$, where $b\in\R^d$ is a translation vector, $P$ is a permutation matrix of the vectors from the standard basis, and $S$ is a diagonal matrix with $\pm1$ on its diagonal.
\end{prop}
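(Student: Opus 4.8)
The plan is to exploit the fact, already available from Proposition~\ref{prop:John}, that $h$ is affine, so that $h(x) = Lx + b$ for some translation vector $b \in \R^d$ and some linear map $L\:\R^d_\infty \to \R^d_\infty$. Since translations are isometries of $\R^d_\infty$, the map $x \mapsto h(x) - b = Lx$ is a linear isometry, and it suffices to show that every linear isometry $L$ of $\R^d_\infty$ has the form $S\cdot P$ with $P$ a permutation matrix and $S$ a diagonal sign matrix. First I would reduce the problem to understanding how $L$ acts on the unit ball $B$ of $\R^d_\infty$, which (up to the factor $\tfrac12$ in the norm definition) is the Euclidean cube $[-1,1]^d$. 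Because $L$ is a linear bijection preserving the norm, it maps $B$ onto itself, and hence maps the boundary of the cube onto itself and fixes the center (the origin).

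The key step is to track the vertices of the cube. The vertices of $[-1,1]^d$ are exactly the points of the cube that are \emph{extreme}, i.e.\ not midpoints of any nondegenerate segment contained in $B$; since $L$ is a linear homeomorphism carrying $B$ onto $B$, it permutes this set of $2^d$ vertices. Equivalently, I would characterize the vertices intrinsically as the points $v$ of $B$ attaining $\|v\|_\infty$-norm (scaled) maximal in every coordinate simultaneously, i.e.\ the points with all coordinates equal to $\pm 1$; being extreme points of a convex body, they are preserved by any linear automorphism of that body. The next step is to pass from vertices to the coordinate structure: the standard basis vectors $e_i$ (up to scaling) are the centers of the $(d-1)$-dimensional faces of the cube, and the facets are themselves preserved by $L$ since facets are the maximal proper faces of the cube. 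A cleaner route is to note that $L$ sends faces to faces of the same dimension, so in particular it permutes the $2d$ facets; the facet $\{x : x_i = 1\}$ and its opposite $\{x : x_i = -1\}$ form an antipodal pair, and $L$ must send this pair to another antipodal pair $\{x : x_j = \pm 1\}$. This defines a permutation $\pi$ of $\{1,\dots,d\}$ via $i \mapsto j$, together with a sign $\varepsilon_i \in \{\pm 1\}$ recording whether the facet $x_i = 1$ goes to $x_{\pi(i)} = +1$ or to $x_{\pi(i)} = -1$.

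From this combinatorial data I would reconstruct $L$ explicitly: writing $L e_i$ in coordinates, the preservation of the facet-pairing forces $L e_i = \varepsilon_i e_{\pi(i)}$, which is precisely the statement that the matrix of $L$ is $S \cdot P$, where $P$ is the permutation matrix of $\pi$ and $S = \operatorname{diag}(\varepsilon_1,\dots,\varepsilon_d)$ appropriately indexed. Restoring the translation gives $h(x) = (S\cdot P)x + b$, as claimed. The main obstacle I anticipate is the rigorous justification that a linear isometry must permute the facets of the cube in an antipodal-pair-preserving way, rather than, say, mapping a facet onto a lower-dimensional face or a diagonal slice; this is where one genuinely uses that $L$ is a \emph{linear} bijection preserving the cube (so it preserves the face lattice of the cube as a polytope) and not merely an abstract norm-preserving map. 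Once the face-lattice automorphism structure of the cube is pinned down---the automorphism group of the $d$-cube being the hyperoctahedral group of signed permutations---the conclusion is immediate, so the entire difficulty is concentrated in identifying $L$ with such a signed-permutation symmetry.
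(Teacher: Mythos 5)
Your proposal is correct and follows essentially the same route as the paper: reduce to a linear isometry via translation invariance, observe that the unit ball is the cube $[-1,1]^d$ and is preserved, deduce that facets go to facets, and conclude $Le_i=\pm e_{\pi(i)}$. The only cosmetic difference is that the paper identifies $\pm e_i$ as the (normalized) sum of the vertices of the corresponding facet, whereas you argue via extreme points and antipodal facet pairs; both pin down the same signed-permutation structure.
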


\begin{proof}
Any affine mapping is a composition of a linear mapping $x\mapsto Ax$ with a translation by some vector. Since the distance in a normed space is invariant under any translation, it suffices to describe all linear isometries $h(x)=Ax$. Every such mapping takes the unit ball with center at the origin onto itself. Notice that the ball in $\R^d_\infty$ is the cube with vertices at points with coordinates $\pm1$. The hyperfaces (the facets) of this cube are given by the equations $x_i=\pm1$, and $h$ maps them into each other. This implies that the faces of the cube (of any dimension) are transferred by $h$ into the faces of the same dimension.

The center of a hyperface $x_i=\pm1$ is the vector $\pm e_i$, where $e_i$ is a vector in the standard basis of the arithmetic space $\R^d$. Notice that this center is equal to the sum of radius vectors of the corresponding hyperface vertices, up to the factor $2^{d-1}$. Thus the mapping $h$ takes each vector $e_i$ into a vector $\pm e_j$, i.e., $h$ is the composition of a basic vectors permutation with their signs changes.
\end{proof}

Let $f\:\cM\to\cM$ be an arbitrary isometry, $X\in\cM_{[n]}^g$ and $Y=f(X)$. By Proposition~\ref{prop:GenPosIsoInvar}, we have $Y\in\cM_{[n]}^g$. Choose $\e>0$ in such a way that the balls $B_\e(X)$ and $B_\e(Y)$ in $\cM_{[n]}$ are canonical neighbourhoods. Then for any $\r_X\in\Pi^{-1}(X)$ and $\r_Y\in\Pi^{-1}(Y)$ we have $B_\e(\r_X)\ss\cC_n^g$,  $B_\e(\r_Y)\ss\cC_n^g$, and the restrictions $\pi_{\e,\r_X}$ and $\pi_{\e,\r_Y}$ of the mapping $\Pi^g$ onto these neighbourhoods are isometries with $B_\e(X)$ and $B_\e(Y)$, respectively. However, in this case the mapping
$$
h_{\e,\r_X,\r_Y}=\pi^{-1}_{\e,\r_Y}\c f\c\pi_{\e,\r_X}\:U_\e(\r_X)\to U_\e(\r_Y)
$$
is an isometry as well. By Proposition~\ref{prop:John}, the mapping $h$ is affine. Thus, we get the following result.

\begin{cor}\label{cor:affine-form}
Under the above notations, if $\e>0$ is such that $B_\e(X)$ and $B_\e(Y)$ are canonical neighbourhoods, then the mapping
$$
h_{\e,\r_X,\r_Y}=\pi^{-1}_{\e,\r_Y}\c f\c\pi_{\e,\r_X}\:U_\e(\r_X)\to U_\e(\r_Y)
$$
has the form $h_{\e,\r_X,\r_Y}(\r)=(S\cdot P)\r+b$, where $b\in\R^N$ is a translation vector, $P$ is a permutation matrix of the standard basic vectors, and $S$ is a diagonal matrix with $\pm1$ on its diagonal.
\end{cor}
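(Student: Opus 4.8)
The plan is to realize $h_{\e,\r_X,\r_Y}$ as a genuine isometry between open subsets of the normed space $\R^N_\infty$, and then to invoke the two affinity results already established. The proof is essentially an assembly of the local-structure theory developed above, so I would organise it as a verification that the hypotheses of Propositions~\ref{prop:John} and~\ref{prop:max-isom} are met.

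First I would confirm that the conjugated map is well defined and is an isometry onto. Since $f\:\cM\to\cM$ is a global isometry with $f(X)=Y$, it carries $U_\e(X)$ isometrically onto $U_\e(Y)$. By Proposition~\ref{prop:GenPosIsoInvar} we have $Y\in\cM_{[n]}^g$, so both $X$ and $Y$ possess canonical neighbourhoods, and by Corollary~\ref{cor:proj-metric-props} the restrictions $\pi_{\e,\r_X}\:U_\e(\r_X)\to U_\e(X)$ and $\pi_{\e,\r_Y}\:U_\e(\r_Y)\to U_\e(Y)$ of $\Pi^g$ are isometries onto, with $B_\e(\r_X),B_\e(\r_Y)\ss\cC_n^g$. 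Hence
$$
h_{\e,\r_X,\r_Y}=\pi^{-1}_{\e,\r_Y}\c f\c\pi_{\e,\r_X}
$$
is a composition of three isometries, and therefore a bijective isometry of the open ball $U_\e(\r_X)\ss\R^N_\infty$ onto the open ball $U_\e(\r_Y)\ss\R^N_\infty$.

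Next I would apply the affinity machinery. The domain $U_\e(\r_X)$ is a convex, hence connected, open subset of the complete normed space $\R^N_\infty$, and the image $U_\e(\r_Y)$ is open; thus Proposition~\ref{prop:John} applies and guarantees that $h_{\e,\r_X,\r_Y}$ is the restriction of an affine isometry $H\:\R^N_\infty\to\R^N_\infty$. Finally, Proposition~\ref{prop:max-isom} describes every such $H$ in the form $\r\mapsto(S\cdot P)\r+b$ with $P$ a permutation matrix of the standard basic vectors, $S$ a diagonal matrix with $\pm1$ on its diagonal, and $b\in\R^N$ a translation vector; restricting $H$ to $U_\e(\r_X)$ yields the asserted expression for $h_{\e,\r_X,\r_Y}$.

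I do not anticipate a serious obstacle at this stage: the genuine content is hidden in the local structure theorem (Corollary~\ref{cor:proj-metric-props}), which identifies canonical neighbourhoods of generic spaces with balls in $\R^N_\infty$, and in the invariance $f(\cM_{[n]}^g)=\cM_{[n]}^g$ (Proposition~\ref{prop:GenPosIsoInvar}), which ensures $Y$ is again generic so that the chart $\pi_{\e,\r_Y}$ exists. The single point that demands care is the hypothesis check for F.~John's theorem, namely that $h$ really maps a \emph{connected open} subset isometrically onto an open subset of the \emph{same} complete normed space, so that the theorem produces a globally affine extension whose linear part is then pinned down explicitly by Proposition~\ref{prop:max-isom}.
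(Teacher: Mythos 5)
Your proof is correct and follows essentially the same route as the paper: realize $h_{\e,\r_X,\r_Y}$ as a composition of isometries between open balls in $\R^N_\infty$ using Proposition~\ref{prop:GenPosIsoInvar} and Corollary~\ref{cor:proj-metric-props}, then apply Proposition~\ref{prop:John} for affinity and Proposition~\ref{prop:max-isom} for the explicit form $(S\cdot P)\r+b$. No gaps.
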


The next Lemma will be used in what follows.

\begin{lem}\label{lem:affine-maps-intersect-coincide}
If two affine mappings $x\mapsto A_i\,x+b_i$, $i=1,2$, defined on intersecting open subsets of the space $\R^d$ are coincide in the intersection, then $A_1=A_2$ and $b_1=b_2$.
\end{lem}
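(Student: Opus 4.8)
The plan is to use only that the common domain is open and nonempty. Write $U$ for the intersection of the two open subsets on which the maps are defined; since $U$ is an intersection of open sets it is open, and by hypothesis it is nonempty. The coincidence of the two affine maps on $U$ means that $A_1 x+b_1=A_2 x+b_2$ for every $x\in U$, which I rewrite as $(A_1-A_2)x=b_2-b_1$ for all $x\in U$. The whole argument is then to show that the linear operator $C:=A_1-A_2$ vanishes, after which $b_2-b_1=Cx=0$ gives $b_1=b_2$ immediately.

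First I would fix a point $x_0\in U$ and, using openness, choose $\dl>0$ with $B_\dl(x_0)\ss U$. For any vector $v\in\R^d$ with $\|v\|<\dl$ we then have $x_0+v\in U$, so
$$
C(x_0+v)=b_2-b_1=Cx_0,
$$
and subtracting yields $Cv=0$ for every $v$ in a ball around the origin. By homogeneity of the linear map $C$ — for arbitrary $w\in\R^d$ the vector $\l w$ lies in that ball for small $\l>0$, and $C(\l w)=\l\,Cw=0$ forces $Cw=0$ — I conclude $C=0$ on all of $\R^d$, i.e. $A_1=A_2$. Substituting back into $(A_1-A_2)x=b_2-b_1$ gives $b_1=b_2$, completing the argument.

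There is no real obstacle here; the statement is elementary. The only point that genuinely must be used is the \emph{openness} of the common domain: it is exactly this that supplies a full ball of directions spanning $\R^d$, so that agreement of the two affine maps propagates from the domain to the whole space. (On a lower-dimensional piece, such as a segment or a proper affine subspace, two distinct affine maps can perfectly well coincide, so the hypothesis that the subsets are open cannot be dropped.) I would therefore present the proof in the order above, making the use of the ball $B_\dl(x_0)\ss U$ explicit, since that is the single substantive ingredient.
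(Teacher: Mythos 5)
Your proof is correct. The paper states this lemma without proof, treating it as elementary, and your argument --- rewriting the coincidence as $(A_1-A_2)x=b_2-b_1$ on a ball $B_\dl(x_0)\ss U$, subtracting to get $Cv=0$ for all small $v$, and invoking homogeneity of the linear part to conclude $A_1=A_2$ and then $b_1=b_2$ --- is exactly the standard argument one would supply, with the role of openness correctly identified as the essential hypothesis.
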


\begin{constr}\label{constr:path-in-cC}
Let $X,X'\in\cM_{[n]}^g$ and the corresponding $\r_X,\r_{X'}\in\cC_n^g$ are such that the segment $L=[\r_X,\r_{X'}]$ belongs to $\cC_n^g$. Let us consider the segment $L$ as a continuous curve, and denote by $\g$ the image of the curve $L$ under the mapping $\Pi^g$. Then $\g$ is a curve in $\cM_{[n]}^g$ joining $X$ and $X'$. Let $\g'$ be the image of the curve $\g$ under the isometry $f$, then $\g'$ joins $Y:=f(X)$ and $Y':=f(X')$. Choose an arbitrary $\r_Y\in\cC_n^g$. By Corollary~\ref{cor:loc-iso-covering}, the mapping $\Pi^g\:\cC_n^g\to\cM_{[n]}^g$ is a covering in a broad sense, therefore, by Proposition~\ref{prop:th-cov-homot}, there exists a unique continuous curve $L'$ in $\cC_n^g$ starting at $\r_Y$ and such that its $\Pi^g$-image is the curve $\g'$. Since the second endpoint of the curve $L'$ is projected to $Y'$, this endpoint coincides with $\r_{Y'}$ for some numeration of points of the space $Y'$.

Now, choose $\e>0$ such that all the balls $U_\e(X)$, $U_\e(X')$, $U_\e(Y)$, and $U_\e(Y')$ are canonical neighbourhoods simultaneously. Then, under the notations of Corollary~\ref{cor:proj-metric-props}, the isometries $\pi_{\e,\r_X}$, $\pi_{\e,\r_{X'}}$, $\pi_{\e,\r_Y}$, $\pi_{\e,\r_{Y'}}$ generate two other isometries $h_{\e,\r_X,\r_Y}=\pi^{-1}_{\e,\r_Y}\c f\c\pi_{\e,\r_X}$ and $h_{\e,\r_{X'},\r_{Y'}}=\pi^{-1}_{\e,\r_{Y'}}\c f\c\pi_{\e,\r_{X'}}$. By Proposition~\ref{prop:John}, the mappings $h_{\e,\r_X,\r_Y}$ and $h_{\e,\r_{X'},\r_{Y'}}$ are the restrictions of affine isometries $H\:\R^N_\infty\to\R^N_\infty$ and $H'\:\R^N_\infty\to\R^N_\infty$, respectively.
\end{constr}

\begin{lem}\label{lem:affine-mappings-coincides-segments}
Under the above notations, the affine isometries $H$ and $H'$ coincide.
\end{lem}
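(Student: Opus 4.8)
The plan is to join $H$ and $H'$ by a one-parameter family of affine isometries, each describing $f$ locally along the segment, and to show that this family is constant.

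First I would parametrize $L$ by $\r_t=(1-t)\r_X+t\,\r_{X'}$, $t\in[0,1]$, put $X_t=\Pi^g(\r_t)$, so that $\g(t)=X_t$ and $\g'(t)=f(X_t)$; writing $\r'_t=L'(t)$ for the lift we have $\Pi^g(\r'_t)=f(X_t)$ with $\r'_0=\r_Y$ and $\r'_1=\r_{Y'}$. The two curves $L=\{\r_t\}$ and $L'=\{\r'_t\}$ are compact subsets of the open set $\cC_n^g$, so by compactness and Corollary~\ref{cor:proj-metric-props} one may fix a single $\e>0$ for which $B_\e(\r_t)$ and $B_\e(\r'_t)$, together with $B_\e(X_t)$ and $B_\e\bigl(f(X_t)\bigr)$, are canonical for every $t$. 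For each $t$ I would then form $h_t=\pi^{-1}_{\e,\r'_t}\c f\c\pi_{\e,\r_t}\:U_\e(\r_t)\to U_\e(\r'_t)$, which by Proposition~\ref{prop:John} extends to a unique affine isometry $H_t\:\R^N_\infty\to\R^N_\infty$. By Construction~\ref{constr:path-in-cC} one has $H_0=H$ and $H_1=H'$, so it suffices to prove that $t\mapsto H_t$ is constant.

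The key step is to show that $t\mapsto H_t$ is locally constant. Fix $s\in[0,1]$ and let $t$ be close to $s$. Since $L$ is a segment and $L'$ is continuous, for $|t-s|$ small the balls $U_\e(\r_t)$ and $U_\e(\r_s)$ overlap, and so do $U_\e(\r'_t)$ and $U_\e(\r'_s)$. On the source overlap both $\pi_{\e,\r_t}$ and $\pi_{\e,\r_s}$ are restrictions of the one covering $\Pi^g$, hence coincide; and because $\r'_t,\r'_s$ lie on the single lifted curve $L'$, the inverse projections $\pi^{-1}_{\e,\r'_t}$ and $\pi^{-1}_{\e,\r'_s}$ agree on the overlap of their images. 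Hence $h_t$ and $h_s$ coincide on the common part of their domains, so by Lemma~\ref{lem:affine-maps-intersect-coincide} their affine extensions satisfy $H_t=H_s$. As $[0,1]$ is connected, $t\mapsto H_t$ is constant, and therefore $H=H_0=H_1=H'$.

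The hard part will be the target-side claim that $\pi^{-1}_{\e,\r'_t}$ and $\pi^{-1}_{\e,\r'_s}$ agree on overlaps. The covering $\Pi^g$ has $n!$ sheets, and a priori nearby base points in $\cM_{[n]}^g$ could be reached through different sheets; what makes the inverse projections consistent is exactly that the lift $L'$ is a single continuous curve (uniqueness of lifting, Proposition~\ref{prop:th-cov-homot}), so the chosen sheet varies continuously with the parameter. Making this precise, jointly with the uniform choice of $\e$ along the compact curves, is the technical heart of the argument; once it is in place, the reduction to Lemma~\ref{lem:affine-maps-intersect-coincide} together with connectedness of $[0,1]$ is routine.
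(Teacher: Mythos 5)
Your proposal is correct and follows essentially the same route as the paper: both arguments glue local affine extensions along the segment using Lemma~\ref{lem:affine-maps-intersect-coincide}, with the agreement of consecutive local isometries coming from the fact that the source and target projections are restrictions of the covering $\Pi^g$ on the sheet singled out by the continuous lift $L'$. The only difference is presentational --- you use a uniform $\e$ (via compactness) and local constancy of $t\mapsto H_t$ plus connectedness of $[0,1]$, whereas the paper uses varying radii $\e_t$, a finite subcover, and interpolated smaller balls to force the nesting $W_{2k}\ss W_{2k\pm1}$ that makes the sheet-consistency step (your declared ``hard part'') immediate.
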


\begin{proof}
Let the segment $L$, together with the curves $\g$, $\g'$, and $L'$, are parameterized by a parameter $t\in[a,b]$, $L(a)=\r_X$ and $L(b)=\r_{X'}$.

For each $t\in[a,b]$ choose $\e_t>0$ such that $B_{\e_t}\bigl(L(t)\bigr)$ and $B_{\e_t}\bigl(L'(t)\bigr)$ are canonical neighbour\-hoods. The family of balls $\Bigl\{U_{\e_t}\bigl(L(t)\bigr)\Bigr\}$ is an open covering of the segment $[\r_X,\,\r_{X'}]$. Let $\{U_i\}_{i=1}^m$ be a finite subcovering that exists, because the segment is compact. Without loss of generality, suppose that the family $\{U_i\}_{i=1}^m$ is minimal in the sense that no one $U_i$ is contained in another $U_j$; besides, assume that the centers $\r_i$ of the balls $U_i$ are ordered along the segment $[\r_X,\r_{X'}]$. These two conditions imply that the consecutive $U_i$ intersect each other, in particular, the distance between each $\r_i$ and $\r_{i+1}$ is less than the sum of radii $\e_i$ and $\e_{i+1}$ of the balls $U_i$ and $U_{i+1}$, respectively. Since the balls $U_i$ are open, then each intersection $U_i\cap U_{i+1}$ is open as well. Further, since $|\r_i\r_{i+1}|_\infty<\e_i+\e_{i+1}$, then there exists $\r'_i\in(\r_i,\r_{i+1})$ such that $\r'_i\in U_i\cap U_{i+1}$; besides, since the set $U_i\cap U_{i+1}$ is open, one can choose an open ball $U'_i$ with center $\r'_i$ and radius $\e'_i$ in such a way that $U'_i\ss\ U_i\cap U_{i+1}$. As a result, we have constructed a new covering $\{U_1,U'_1,U_2,U'_2,\ldots\}$ of the segment $[\r_X,\,\r_{X'}]$. By $\{V_i\}_{i=1}^{2m-1}$ we denote the consecutive elements of this new covering. Introduce new notations: let $\r_i=L(t_i)$ be the center of the ball $V_i$, and $\e_i$ be the radius of this ball. Thus, $V_i=U_{\e_i}(\r_i)$.

Further, put $\nu_i=L'(t_i)$ and consider the family of open balls $\bigl\{W_i:=U_{\e_i}(\nu_i)\bigr\}_{i=1}^{2m-1}$, then, by definition, each of these balls lies in $\cC_n^g$, and the restrictions $\pi_{\e_i,\nu_i}$ of the mapping $\Pi^g$ onto these balls are isometric. Besides, the restriction $\pi_{\e_i,\r_i}$ of the mapping $\Pi^g$ onto $V_i$ is isometric as well. Put $h_i=h_{\e_i,\r_i,\nu_i}=\pi^{-1}_{\e_i,\nu_i}\c f\c\pi_{\e_i,\r_i}$, then $h_i\:V_i\to W_i$ is an isometry for each $i$.

Since, by the construction $V_{2k}\ss V_{2k-1}$, the fact that $h_{2k-1}$ is an isometry implies that $|\r_{2k}\r_{2k-1}|_\infty=|\nu_{2k}\nu_{2k-1}|_\infty$, therefore, $W_{2k}\ss W_{2k-1}$ and $\pi_{\e_{2k},\nu_{2k}}=\pi_{\e_{2k-1},\nu_{2k-1}}|_{W_{2k}}$, because the both mappings are the restrictions of $\Pi^g$. Thus, $h_{2k}=h_{2k-1}|_{V_{2k}}$. Similarly, one can show that $h_{2k}=h_{2k+1}|_{V_{2k}}$.

By Proposition~\ref{prop:John}, for every $i$ there exists an affine mappings $H_i\:\R^N_\infty\to\R^N_\infty$ such that $h_i=H_i|_{V_i}$. As  we have shown above, the consecutive $h_i$ coincide on open sets that are the intersections of the domains of the corresponding mappings, thus, by Lemma~\ref{lem:affine-maps-intersect-coincide}, all these $H_i$ coincide, in particular, $H_1=H_{2m-1}$. By the same Lemma, $H=H_1$ and $H'=H_{2m-1}$.
\end{proof}

\begin{cor}\label{cor:affine-mappings-coincides-polyline}
If in Construction~$\ref{constr:path-in-cC}$ one changes the segment $L$ by a polygonal line, then Lemma~$\ref{lem:affine-mappings-coincides-segments}$ remains true.
\end{cor}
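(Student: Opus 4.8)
The plan is to reduce the case of a polygonal line to the already-settled case of a single segment, by applying Lemma~\ref{lem:affine-mappings-coincides-segments} to each edge and then chaining the resulting affine isometries together across the vertices.

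First I would write the polygonal line as a concatenation of edges $L_j=[\r^{(j-1)},\r^{(j)}]$, $j=1,\ldots,k$, where $\r^{(0)}=\r_X$, $\r^{(k)}=\r_{X'}$, and all the vertices $\r^{(j)}$ lie in $\cC_n^g$; since the whole polygonal line lies in $\cC_n^g$, so does each edge, and hence each $L_j$ satisfies the hypotheses of Construction~\ref{constr:path-in-cC}. Parameterize the whole line, regarded as a curve in $\cC_n^g$, by $t\in[a,b]$, with the vertices occurring at parameter values $a=t_0<t_1<\cdots<t_k=b$. Applying $\Pi^g$ gives a curve $\g$ in $\cM_{[n]}^g$, and applying $f$ gives a curve $\g'$ joining $Y=f(X)$ and $Y'=f(X')$. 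By Corollary~\ref{cor:loc-iso-covering} and Proposition~\ref{prop:th-cov-homot}, the lift $L'$ of $\g'$ starting at the chosen $\r_Y$ exists and is unique; set $\nu^{(j)}=L'(t_j)$, so that $\nu^{(0)}=\r_Y$ and $\nu^{(k)}=\r_{Y'}$.

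Next, for each edge $L_j$ I would apply Lemma~\ref{lem:affine-mappings-coincides-segments} verbatim, taking the segment to be $L_j$ and its lift to be $L'|_{[t_{j-1},t_j]}$; the latter is indeed the lift (starting at $\nu^{(j-1)}$) of the $f$-image of $\Pi^g(L_j)$, since by uniqueness the restriction of the single lift $L'$ to a subinterval is forced to coincide with the lift of the corresponding sub-curve having the matching initial point. The lemma then produces, for each $j$, a single affine isometry $H^{(j)}\:\R^N_\infty\to\R^N_\infty$ that simultaneously extends the local isometry $h_{\e,\r^{(j-1)},\nu^{(j-1)}}$ at the initial vertex of $L_j$ and the local isometry $h_{\e,\r^{(j)},\nu^{(j)}}$ at its terminal vertex.

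The crucial point is that consecutive $H^{(j)}$ agree. At the shared vertex $\r^{(j)}$ the isometry $H^{(j)}$ restricts to $h_{\e,\r^{(j)},\nu^{(j)}}=\pi^{-1}_{\e,\nu^{(j)}}\c f\c\pi_{\e,\r^{(j)}}$, and $H^{(j+1)}$ restricts to exactly the same local isometry, because both the point $\r^{(j)}$ and its lift $\nu^{(j)}=L'(t_j)$ are determined independently of whether the vertex is approached from $L_j$ or from $L_{j+1}$. Thus $H^{(j)}$ and $H^{(j+1)}$ coincide on the open set $U_\e(\r^{(j)})$, so Lemma~\ref{lem:affine-maps-intersect-coincide} gives $H^{(j)}=H^{(j+1)}$. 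Iterating yields $H^{(1)}=\cdots=H^{(k)}$, and since $H=H^{(1)}$ and $H'=H^{(k)}$ we conclude $H=H'$. The only delicate point I expect is the splicing of the edge-lifts into the single lift $L'$, so that the vertex data $\nu^{(j)}$ is unambiguous; this is precisely where the uniqueness clause of Proposition~\ref{prop:th-cov-homot} does all the work.
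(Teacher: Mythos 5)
Your proposal is correct and is precisely the argument the paper leaves implicit (the corollary is stated without a written proof): apply Lemma~\ref{lem:affine-mappings-coincides-segments} edge by edge, use the uniqueness clause of Proposition~\ref{prop:th-cov-homot} to identify the restriction of the global lift to each edge with the edge's own lift, and glue the resulting affine isometries at the shared vertices via Lemma~\ref{lem:affine-maps-intersect-coincide}. The only point worth making explicit is that the canonical radius $\e$ used at a common vertex may differ between the two adjacent edges, but since the two local isometries there are both restrictions of $\pi^{-1}\c f\c\Pi^g$ near that vertex they agree on a nonempty open set, which is all Lemma~\ref{lem:affine-maps-intersect-coincide} requires.
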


\begin{prop}\label{prop:affine-form}
Under the notations of Corollary~$\ref{cor:affine-form}$, the matrix $S$ is unit, and $b=0$.
\end{prop}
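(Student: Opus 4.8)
The plan is to upgrade the \emph{local} normal form of Corollary~\ref{cor:affine-form} to a single \emph{global} affine map and then squeeze out $S$ and $b$ from the one global constraint that $f$ preserves diameters (Corollary~\ref{cor:fD1}). First suppose $n\ne3$. By Proposition~\ref{prop:CngLInearConnected} any two points of $\cC_n^g$ are joined by a polygonal line inside $\cC_n^g$, so Lemma~\ref{lem:affine-mappings-coincides-segments} and Corollary~\ref{cor:affine-mappings-coincides-polyline} imply that the affine isometry produced in Corollary~\ref{cor:affine-form} does not depend on the chosen generic point: there is a single affine map $H\:\R^N_\infty\to\R^N_\infty$, $H(\r)=(S\cdot P)\r+b$, whose restriction to each canonical ball $U_\e(\r_X)$ is the corresponding $h_{\e,\r_X,\r_Y}$. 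Hence $\Pi\bigl(H(\r)\bigr)=f\bigl(\Pi(\r)\bigr)$ for every $\r\in\cC_n^g$, and it suffices to prove the claim for $H$.

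Next I would extract the diameter identity. For $\r\in\cC_n$ the diameter of $\Pi(\r)$ is the largest pairwise distance, i.e.\ $\diam\Pi(\r)=\max_i\r_i$ over the coordinates of $\r$. Since $f$ preserves diameters (Corollary~\ref{cor:fD1}), the relation $\Pi\bigl(H(\r)\bigr)=f\bigl(\Pi(\r)\bigr)$ gives $\max_i\r_i=\max_i H(\r)_i$ on $\cC_n^g$, and by continuity of both sides together with density of $\cC_n^g$ in $\cC_n$, on all of $\cC_n$. Writing $H(\r)_i=s_i\,\r_{\s(i)}+b_i$, where $\s$ is the permutation encoded by $P$, the $s_i=\pm1$ are the diagonal entries of $S$, and $b_i$ the coordinates of $b$, this reads
\[
\max_i\r_i=\max_i\bigl(s_i\,\r_{\s(i)}+b_i\bigr),\qquad \r\in\cC_n .
\]
Both sides are convex piecewise-linear, hence differentiable on an open dense subset of the full-dimensional cone $\cC_n$. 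Where the left maximum is attained at a single coordinate $k$ its gradient is $e_k$, and where the right maximum is attained at a single index $i$ its gradient is $s_i\,e_{\s(i)}$; comparison on the common differentiability locus forces $s_i=1$ and $\s(i)=k$ for that argmax pair. Every coordinate $k$ can be made the \emph{strict} maximal coordinate of some $\r\in\cC_n$ (start from a metric close to $t\,\D_n$, whose triangle inequalities are strict, and raise the $k$-th distance slightly); as $k$ ranges over all coordinates the maximizing index $i=\s^{-1}(k)$ ranges over all indices, so $s_i=1$ for every $i$ and $S$ is unit.

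With $S$ unit the identity becomes $\max_j\r_j=\max_j(\r_j+c_j)$, where $c_j:=b_{\s^{-1}(j)}$. For any $\r$ with a unique maximal coordinate $j^*$ the gradient comparison shows $j^*$ also maximizes $\r_j+c_j$, i.e.\ $c_{j^*}-c_j\ge\r_j-\r_{j^*}$ for all $j$; letting $\r_j\uparrow\r_{j^*}$ while keeping $j^*$ the maximum gives $c_{j^*}\ge c_j$, and since $j^*$ is arbitrary all $c_j$ are equal to a common $c$. Then $\max_j(\r_j+c)=\max_j\r_j+c=\max_j\r_j$ forces $c=0$, hence $b=0$, completing the case $n\ne3$.

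The hard part will be the exceptional case $n=3$, where $\cC_3^g$ is disconnected (Proposition~\ref{prop:CngLInearConnected}) and the clean globalisation is unavailable, so I would argue inside a single chamber using extra simplex data. For a generic $3$-point space, Proposition~\ref{prop:dist-n-simplex-finite} recovers from the distances to $t\,\D_3$ and to $t\,\D_2$ all three order statistics of $\r$ (smallest, second smallest, and largest distance) as functions of $\r$; since $f$ fixes $t\,\D_3$ and $t\,\D_2$ (Corollary~\ref{cor:fDn}), the local map $h$ preserves each of these three functions, which on a fixed chamber are exactly the three coordinate functions. Thus the multiset $\{h(\r)_1,h(\r)_2,h(\r)_3\}$ equals $\{\r_1,\r_2,\r_3\}$ on an open set, so by continuity the matching index is constant and $h(\r)_i=\r_{\tau(i)}$ identically for some permutation $\tau$, again forcing $S$ unit and $b=0$. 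Besides $n=3$, the two points needing the most care are the continuity/density extension of the diameter identity from $\cC_n^g$ to the closed cone $\cC_n$, and the verification that each coordinate is a strict maximum of some admissible metric; both are routine but are where the bookkeeping must be done honestly.
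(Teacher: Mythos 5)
Your proof is correct, but it takes a genuinely different route from the paper's. The paper applies Construction~\ref{constr:path-in-cC} only to the radial segment $L=[\dl\,\r_X,\r_X]$, which lies in $\cC_n^g$ for every $n$, so no connectivity hypothesis and no case split at $n=3$ are needed; Lemma~\ref{lem:affine-mappings-coincides-segments} then says the same affine map $H(\r)=(S\cdot P)\r+b$ represents $f$ near every $\dl\,\r_X$, and letting $\dl\to0$ gives $\bigl\|H(\dl\,\r_X)\bigr\|_\infty=\frac12\diam f(\dl\,X)=\frac{\dl}{2}\diam X\to0$ by Corollary~\ref{cor:fD1}, which is incompatible with $b\ne0$; finally $S$ is the unit matrix simply because all coordinates of $\r_X$ and of $H(\r_X)$ are positive, so $S$ can contain no $-1$ entries. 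You instead globalize $H$ over all of $\cC_n^g$, convert diameter preservation into the piecewise-linear identity $\max_i\r_i=\max_iH(\r)_i$ on $\cC_n$, and compare gradients on the common differentiability locus; this is heavier machinery (it relies on the path-connectivity statement of Proposition~\ref{prop:CngLInearConnected} and therefore forces a separate treatment of $n=3$), but the argument is sound, and your $n=3$ analysis via the distances to $t\,\D_2$ and $t\,\D_3$ in fact proves the stronger statement that $f$ is the identity on $\cM_{[3]}$. The two points you flagged as needing care --- the density extension from $\cC_n^g$ to $\cC_n$ and the realization of each coordinate as the strict maximum of a metric near $t\,\D_n$ --- are indeed routine and go through. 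The paper's scaling-toward-$\D_1$ trick is worth keeping in mind as the shorter, $n$-uniform alternative that extracts $b=0$ from a single limit and $S=I$ from positivity alone.
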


\begin{proof}
Under the notations of Construction~\ref{constr:path-in-cC}, let us choose an arbitrary $0<\dl<1$ and take the space $\dl\,X$ as $X'$. By Corollary~\ref{cor:affine-mappings-coincides-polyline}, the mappings
$$
h_{\e,\r_X,\r_Y}\:U_\e(\r_X)\to U_\e(\r_Y)\quad \text{and} \quad h_{\e,\r_{X'},\r_{Y'}}\:U_\e(\r_{X'})\to U_\e(\r_{Y'})
$$
are the restrictions of the same affine isometry $H$ which does not depend on the choice of $\dl$. By Proposition~\ref{prop:max-isom}, we have $H(\r)=(S\cdot P)\r+b$, where $b\in\R^N_\infty$ is a translation vector, $P$ is a permutation matrix of the standard basic vectors, and $S$ is a diagonal matrix with $\pm1$ on its diagonal.

Notice that $\|\dl\,\r_X\|_\infty\to0$ as $\dl\to0$, therefore, by Corollary~\ref{cor:fD1}, we have $\bigl\|H(\dl\,\r_X)\bigr\|_\infty\to0$ as $\dl\to0$. However, if $b\ne0$, then for $\dl$ such that
$$
\|\dl\,\r_X\|_\infty<\frac12\|b\|_\infty\quad \text{and} \quad \bigl\|(S\cdot P)(\dl\,\r_X)\bigl\|_\infty<\frac12\|b\|_\infty,
$$
we get
$$
\bigl\|H(\dl\,\r_X)\bigl\|_\infty=\bigl\|(S\cdot P)(\dl\,\r_X)+b\bigl\|_\infty\ge
-\bigl\|(S\cdot P)(\dl\,\r_X)\bigl\|_\infty+\|b\|_\infty>\frac12\|b\|_\infty,
$$
a contradiction with that $\bigl\|H(\dl\,\r_X)\bigr\|_\infty\to0$ as $\dl\to0$.

Thus, we have shown that $b=0$. It remains to note that all the components of the vectors $\r_X$ and $H(\r_X)$ are positive, therefore, $S$ cannot contain negative elements.
\end{proof}

\section{Some Necessary Facts on Permutation Groups}
\markright{\thesection.~Some Necessary Facts on Permutation Groups}

We have shown above that each mapping $h_{\e,\r_X,\r_Y}$ is the restriction of an affine mapping $x\mapsto Px$ of the space $\R^N$ onto itself, where $P$ is a permutation matrix of the basis vectors, i.e., in fact, an element from the permutation group $S_N$. To complete the proof, we show that $P\in G$, i.e., $P$ is generated by a permutation (i.e., renumeration) of points of metric spaces and, thus, $f$ is an identical mapping locally. To do that, we need a number of facts on permutation groups.

Put $V=\{1,\ldots,n\}$. Let $E$ be the set of the basis vectors $e_{ij}=e_{ji}$ of the space $\R^N$, $i\ne j$. Identify $e_{ij}$ with the corresponding two-element subset $\{i,j\}\ss V$. Then $K_n=(V,E)$ is a complete graph with $n$ vertices and $N$ edges, and hence, the actions of $G$ and $S_N$ can be considered as actions on the set of edges $E$ of the graph $K_n$; note that the action of the group $G$ is generated by permutations on the vertices set $V$. Notice that for $n=3$ all six permutations of the edges are generated by the permutations of vertices, i.e., in this case $G=S_N$.
	
\begin{lem}\label{lem:neibor_preserv}
Let $n\ge 5$. A permutation $\a\in S_N$ belongs to the subgroup $G$, iff $\a$ takes adjacent edges to adjacent ones.
\end{lem}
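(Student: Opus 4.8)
The plan is to recognize the statement as a computation of the automorphism group of the line graph $L(K_n)$, and to exploit that for $n\ge5$ the vertex set $V$ can be recovered intrinsically from the adjacency structure on the edge set $E$. First I would dispose of the easy (``only if'') direction: if $\a$ is induced by a permutation $\s\in S_n$ of $V$, then two edges sharing a vertex $v$ are carried to two edges sharing $\s(v)$, so adjacency is preserved. For the converse, suppose $\a\in S_N$ takes adjacent edges to adjacent edges. I would first upgrade this to a genuine automorphism of the adjacency relation: since $\a$ is a bijection of the finite set $E$, it induces an injection of the finite set of adjacent unordered pairs of edges into itself, hence a bijection of that set; consequently $\a$ also carries non-adjacent pairs to non-adjacent pairs, so $\a$ both preserves and reflects adjacency.

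The key step is an intrinsic description of the maximal cliques of the adjacency relation. A family of edges that pairwise share a vertex is, by the classical dichotomy for pairwise-intersecting two-element sets, either a \emph{star} (all edges through a common vertex $v$, which I denote $S_v$) or a \emph{triangle} (the three edges of some triangle in $K_n$). A star $S_v$ has exactly $n-1$ edges, whereas a triangle has exactly $3$. Here the hypothesis $n\ge5$ enters decisively: since $n-1\ge4>3$, the cliques of maximum size are precisely the $n$ stars. Therefore $\a$, being an automorphism of the adjacency relation, permutes the maximum cliques, i.e.\ permutes the stars, and hence induces a permutation $\s$ of $V$ through the bijection $v\mapsto S_v$ between vertices and stars.

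It then remains to check that $\a$ acts on edges exactly as $\s$ does. For distinct $i,j\in V$ the stars $S_i$ and $S_j$ meet in the single edge $\{i,j\}$, so $\{i,j\}$ is recovered intrinsically as the unique common element of two stars. Since $\a(S_i)=S_{\s(i)}$ and $\a(S_j)=S_{\s(j)}$, applying $\a$ to the intersection yields $\a(\{i,j\})=\{\s(i),\s(j)\}$, which is exactly the edge permutation induced by $\s$; hence $\a\in G$. I expect the main obstacle to be the clique classification together with the careful use of $n\ge5$: one must verify that pairwise-intersecting families of edges are only stars or triangles, that triangles are genuinely maximal cliques (so they cannot be discarded a priori), and that $n-1>3$ is precisely what prevents triangles from being cliques of maximum size. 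The borderline case $n=4$, where $L(K_4)$ is the octahedron with extra symmetries interchanging stars and triangles, shows that the bound $n\ge5$ is sharp.
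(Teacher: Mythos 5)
Your proof is correct and follows essentially the same route as the paper's: both hinge on the fact that a family of at least four pairwise adjacent edges of $K_n$ must be a star (this is exactly where $n\ge5$, i.e.\ $n-1>3$, is used), so that $\a$ induces a permutation $\s$ of $V$ which then recovers $\a$ on edges. The only difference is packaging --- you phrase the key step as the classification of maximum cliques of the line graph of $K_n$, while the paper reaches the same conclusion by an incremental analysis of the images of three and then four edges through a fixed vertex.
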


\begin{proof}
It is easy to see that each permutation $\a\in G$ takes adjacent edges to adjacent ones. Now, let us prove the converse statement.

Suppose that $\a$ takes all pairs of adjacent edges of the graph $K_n$ to adjacent ones. Consider all the edges incident to some fixed vertex $v\in V$ (the number of such edges is $n-1$, in particular, it is not less than $4$). Then, by assumption, their images are pairwise adjacent. Let us show that the edges--images also have a common vertex.

Consider images of any three different edges from the chosen ones. Their images form a connected three-edge subgraph $H$ of $K_n$. Each such subgraph is either a cycle, or a star, or a simple path. The latter case is impossible, because the first and the last edges of the path are not adjacent.

Consider now the image of a fourth edge. It has to be adjacent with all three edges of the subgraph $H$. Therefore, $H$ cannot be a three-edge cycle and, thus, $H$ is a star, and the image of the fourth edge is incident to the common vertex of the star.

Arguing in a similar way, we come to conclusion that the images of all the edges incident with the vertex $v$ are incident to some common vertex. Thus, it is defined a mapping $\s$ from the set $V$ onto itself taking each vertex  $v\in V$ to the unique common vertex of the $\a$-images of all the edges incident to $v$. This mapping is injective: indeed, if $v$ and $w$ are mapped to the same vertex, then their image is common for $2n-2$ edges that is impossible. Besides, $\s$ induces a mapping on the edges of the graph $K_n$ which coincides with $\a$, therefore, $\a\in G$.
\end{proof}

\begin{rk}
For $n=4$ the condition of Lemma~\ref{lem:neibor_preserv} is not sufficient. For instance, the next permutation $\a$ takes triples of edges having common vertex to triples of edges which form cycles:
$$
\a=\Bigl(\begin{array}{cccccc}
\{1,2\} & \{1,3\} & \{1,4\} & \{2,3\} & \{2,4\} & \{3,4\} \\
\{2,3\} & \{3,4\} & \{2,4\} & \{1,3\} & \{1,2\} & \{1,4\}
\end{array}
\Bigr).
$$
Evidently, $\a$ takes adjacent edges to adjacent ones, but it is not generated by a permutation of vertices.
\end{rk}

\begin{ass}\label{ass:normal}
For $n\ge 8$ the normalizer of the subgroup $G$ in $S_N$ coincides with the group $G$.
\end{ass}

\begin{proof}
By $F$ we denote the set of all pairs of different edges of the graph $K_n$. Then $F=F_0\sqcup F_1$, where $F_0$ consists of the pairs of non-adjacent edges, and $F_1$ consists of the pairs of adjacent edges (i.e., of edges having a common vertex).

\begin{lem}\label{lem:pairs_quantities}
Under the above notations,
$$
\#F_0=\frac{n(n-1)(n-2)(n-3)}{8},\qquad \#F_1=\frac{n(n-1)(n-2)}{2}.
$$
In particular, for $n\ge 8$ the number of pairs of non-adjacent edges is greater than the number of pairs of adjacent ones, i.e., $\#F_0>\#F_1$.
\end{lem}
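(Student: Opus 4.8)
The plan is to count each of the two sets by a direct combinatorial argument, using that the total number of edges of $K_n$ is $N=\binom{n}{2}=\frac{n(n-1)}{2}$, and the elementary fact that two distinct edges of a simple graph meet in at most one vertex.

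First I would count $\#F_1$. An unordered pair of adjacent edges has a well-defined common vertex $v$; uniqueness of this vertex is exactly the fact that two distinct edges meet in at most one vertex, so no pair is counted twice. Conversely, fixing a vertex $v$ and choosing an unordered pair among the remaining $n-1$ vertices as the other two endpoints produces each adjacent pair through $v$ exactly once. Summing over $v$ gives $\#F_1=n\binom{n-1}{2}=\frac{n(n-1)(n-2)}{2}$, as claimed.

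Next I would count $\#F_0$. A pair of non-adjacent edges uses four pairwise distinct vertices, and conversely any $4$-element subset of $V$ can be partitioned into two disjoint edges in exactly three ways. Choosing the four vertices in $\binom{n}{4}$ ways and then one of the three matchings yields $\#F_0=3\binom{n}{4}=\frac{n(n-1)(n-2)(n-3)}{8}$. As a consistency check one verifies $\#F_0+\#F_1=\binom{N}{2}=\frac{n(n-1)(n-2)(n+1)}{8}$ (using $n^2-n-2=(n-2)(n+1)$), which both confirms the two formulas and reflects the obvious partition $F=F_0\sqcup F_1$.

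Finally, for the inequality, since $n(n-1)(n-2)>0$ for $n\ge3$, I would divide $\#F_0>\#F_1$ by the common positive factor $\frac{n(n-1)(n-2)}{8}$, reducing it to $n-3>4$, i.e.\ $n>7$, which holds precisely for $n\ge8$. There is no genuine obstacle here; the only points demanding a word of care are the no-double-counting claim for $F_1$ (equivalently, that two adjacent edges share a single vertex) and the elementary fact that four vertices admit exactly three perfect matchings.
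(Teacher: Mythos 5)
Your proof is correct. The count of $\#F_1$ is in substance the same as the paper's: the paper computes the number of edges of the line graph $E(K_n)$ via the handshake lemma (each of the $N=\tbinom{n}{2}$ vertices has degree $2n-4$), while you choose the common vertex $v$ first and then an unordered pair of the remaining $n-1$ vertices, giving $n\tbinom{n-1}{2}$; these are two phrasings of the same double count. Where you genuinely diverge is $\#F_0$: the paper obtains it by subtraction, $\#F_0=\tbinom{N}{2}-\#F_1$, followed by an algebraic simplification, whereas you count it directly as $3\tbinom{n}{4}$ by choosing the four endpoints and one of the three perfect matchings of a $4$-set, relegating the identity $\#F_0+\#F_1=\tbinom{N}{2}$ to a consistency check. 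Your direct count is arguably cleaner (it avoids the factorization $n^2-5n+6=(n-2)(n-3)$ and makes the answer transparent), while the paper's subtraction requires no separate argument about matchings of a $4$-set; both are complete, and both conclude with the same reduction of $\#F_0>\#F_1$ to $n-3>4$.
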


\begin{proof}
Indeed, consider the graph $E(K_n)$, whose vertices are the edges of the graph $K_n$, and two its vertices are adjacent, iff the corresponding edges of $K_n$ are adjacent. Then $E(K_n)=(E,F_1)$. Each edge $\{i,j\}$ of $K_n$ is adjacent in $E(K_n)$ with $n-2$ edges by the vertex $i$, and with $n-2$ edges by the vertex $j$, thus the degree of each vertex of the graph $E(K_n)$ equals $2n-4$. Since the number of vertices of the graph $E(K_n)$ equals $N=n(n-1)/2$, then we get:
$$
\#F_1=\frac{1}{2}\cdot\frac{n(n-1)}{2}\cdot(2n-4)=\frac{n(n-1)(n-2)}{2}.
$$
To calculate the number of the pairs of non-adjacent edges, we have to subtract $\#F_1$ from the number of all pairs:
\begin{multline*}
\#F_0=\#F-\#F_1=\frac{\frac{n(n-1)}{2}\Big(\frac{n(n-1)}{2}-1\Big)}{2}-\frac{n(n-1)(n-2)}{2}=\\
=\frac{n(n-1)}{2}\Big(\frac{n(n-1)}{4}-\frac12-(n-2)\Big)=\frac{n(n-1)}{2}\cdot\frac{n^2-5n+6}{4}=\frac{n(n-1)(n-2)(n-3)}{8}.
\end{multline*}
In particular,
$$
\#F_0=\frac{(n-3)}{4}\cdot\#F_1,
$$
therefore, since $(n-3)/4>1$  for $n>7$, we have $\#F_0>\#F_1$ for $n\ge8$.
\end{proof}

Each permutation $P\in S_N$ acts not only on the edges of the graph $K_n$, but also on the set $F$ by means of the same rule. Lemma~\ref{lem:pairs_quantities} implies the following result.

\begin{lem}\label{lem:pair_non_pair}
For $n\ge 8$ each permutation $P\in S_N$ takes some pair of non-adjacent edges of the graph $K_n$ to a pair of its non-adjacent edges.
\end{lem}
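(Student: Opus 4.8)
The plan is to deduce the Lemma directly from the counting inequality $\#F_0>\#F_1$ established in Lemma~\ref{lem:pairs_quantities}, by a short pigeonhole argument. First I would observe that any permutation $P\in S_N$ of the edge set $E$ induces a map $\hat P$ on the set $F$ of unordered pairs of distinct edges by the rule $\hat P\bigl(\{e,e'\}\bigr)=\{P(e),P(e')\}$. Because $P$ is a bijection of $E$, this $\hat P$ is a well-defined bijection of $F$ onto itself; in particular it is injective and preserves cardinalities of subsets of $F$.

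Next I would argue by contradiction. Suppose the assertion of the Lemma fails, i.e.\ that $P$ sends \emph{every} pair of non-adjacent edges to a pair of adjacent edges. In the notation introduced before the Lemma, this says exactly $\hat P(F_0)\subseteq F_1$. Since $\hat P$ is injective, $\#\hat P(F_0)=\#F_0$, and therefore $\#F_0\le\#F_1$.

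Finally, this contradicts Lemma~\ref{lem:pairs_quantities}, which gives $\#F_0>\#F_1$ for all $n\ge 8$. Hence the supposition is untenable, and there must exist at least one pair of non-adjacent edges whose image under $P$ is again a pair of non-adjacent edges, as claimed.

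I expect no genuine obstacle here: the combinatorial substance is already packaged in the counting Lemma~\ref{lem:pairs_quantities}, and the only point that deserves to be stated carefully is the legitimacy of the cardinality comparison. The subtlety worth flagging explicitly is that $P$ need \emph{not} respect the partition $F=F_0\sqcup F_1$ — indeed the whole situation is interesting precisely because $P$ may scramble adjacency — but it does permute the full set $F$ bijectively, and that is all the pigeonhole step requires.
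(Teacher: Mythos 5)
Your argument is correct and is essentially the paper's own proof: the paper likewise notes that since $\#F_1<\#F_0$, there are not enough elements of $F_1$ to receive all of $F_0$ under the induced bijection on pairs. You have merely spelled out the pigeonhole step (injectivity of $\hat P$ and the resulting cardinality comparison) more explicitly, which is a welcome clarification but not a different method.
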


\begin{proof}
Indeed, the number of elements in $F_1$ is smaller then the one for $F_0$, so we do not have enough elements in $F_1$ to map all elements of $F_0$ onto them.
\end{proof}

Return to the proof of Assertion~\ref{ass:normal}. Let the permutation $P\in S_N$ satisfies $P^{-1}gP\in G$ for some $g\in G$. If $P\not\in G$, then by Lemma~\ref{lem:neibor_preserv}, the permutation $P$ takes some two adjacent edges $\{i,j\}$ and $\{i,k\}$ of the graph $K_n$ to a pair of non-adjacent edges $\{a,b\}$ and $\{c,d\}$. Besides, by Lemma~\ref{lem:pair_non_pair}, there exist two non-adjacent edges $\{i_1,j_1\}$ and $\{i_2,j_2\}$ which are mapped by $P$ to non-adjacent edges $\{a',b'\}$ and $\{c',d'\}$. Since $a$, $b$, $c$, and $d$, as well as $a'$, $b'$, $c'$, and $d'$ are pairwise distinct, then there exists a permutation $g\in S_n$ such that
$$
g(a)=a',\quad g(b)=b', \quad g(c)=c', \quad g(d)=d'.
$$
Then
$$
\{i,j\}\stackrel{P}{\longmapsto}\{a,b\}\stackrel{g}{\longmapsto}\{a',b'\}\stackrel{P^{-1}}{\longmapsto}\{i_1,j_1\}
$$
and
$$
\{i,k\}\stackrel{P}{\longmapsto}\{c,d\}\stackrel{g}{\longmapsto}\{c',d'\}\stackrel{P^{-1}}{\longmapsto}\{i_2,j_2\},
$$
i.e., the composition $P^{-1}g\,P$ takes some adjacent edges to non-adjacent ones, and, thus, is does not belong to $G$. This contradiction completes the proof of the assertion.
\end{proof}

\section{Completion of Main Theorem Proof}
\markright{\thesection.~Completion of Main Theorem Proof}
Let $f\:\cM\to\cM$ be an isometry, $X\in\cM^g_{[n]}$, and $f(X)=Y$. Choose $\e>0$ such that $B_\e(X)$ and $B_\e(Y)$ are canonical neighbourhoods. Fix some $\r_X\in\Pi^{-1}(X)$ and $\r_Y\in\Pi^{-1}(Y)$, then the mapping $h_{\e,\r_X,\r_Y}\:U_\e(\r_X)\to U_\e(\r_Y)$ from Corollary~\ref{cor:affine-form} is the restriction of a linear mapping $\R^N\to\R^N$ with permutation matrix $P\in S_N$ (this linear mapping we denote by the same letter $P$).

\begin{lem}\label{lem:P_isom}
For $n\ge4$ the permutation $P\in S_N$ belongs to the normalizer of the subgroup $G$, i.e., $P^{-1}gP\in G$ for every $g\in G$.
\end{lem}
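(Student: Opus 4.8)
The plan is to reinterpret the permutation matrix $P$ as a \emph{global} lift of $f$ through the covering $\Pi^g\:\cC_n^g\to\cM_{[n]}^g$, and then to invoke the standard fact that two lifts of the same map through a regular covering of a connected base differ by a deck transformation. Recall from Corollary~\ref{cor:loc-iso-covering} that $\Pi^g$ is an $n!$-sheeted covering; its deck transformations are exactly the renumberings of points, i.e. the elements of $G$. Since at a generic point the stabilizer is trivial, $G$ acts freely and transitively on each fibre, so this covering is regular with deck group $G$. This is the structural fact that will drive everything.

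First I would promote the local picture to a global one. By Proposition~\ref{prop:CngLInearConnected}, for $n\ge4$ the set $\cC_n^g$ is path-connected and any two of its points are joined by a polygonal line lying in $\cC_n^g$. Fix $\r_X\in\Pi^{-1}(X)$ and $\r_Y\in\Pi^{-1}(Y)$ so that the affine map of Corollary~\ref{cor:affine-form} sends $\r_X$ to $\r_Y$; by Proposition~\ref{prop:affine-form} its linear part is the permutation $P$ and its translation part vanishes, so this local map is the restriction of the linear map $P$, and $P\r_X=\r_Y$. Given any $\r\in\cC_n^g$, join $\r_X$ to $\r$ by a polyline in $\cC_n^g$; Corollary~\ref{cor:affine-mappings-coincides-polyline} shows that the affine isometry realizing $f$ at the far endpoint is again $P$, so the local lift $\pi^{-1}_{\e,P\r}\c f\c\pi_{\e,\r}$ equals the restriction of $P$. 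In particular $P$ carries the canonical neighbourhood of $\r$ into $\cC_n^g$, whence $P(\cC_n^g)\ss\cC_n^g$, and evaluating $f\c\pi_{\e,\r}=\pi_{\e,P\r}\c P$ at $\r$ yields $f\bigl(\Pi^g(\r)\bigr)=\Pi^g(P\r)$. Thus $\Pi^g\c P=f\c\Pi^g$ on all of $\cC_n^g$; that is, $P$ is a global lift of $f$.

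Then the normalizer condition follows formally. Fix $g\in G$. Since $g$ is a deck transformation, $\Pi^g\c g=\Pi^g$, and therefore $\Pi^g\c(P\c g)=f\c\Pi^g\c g=f\c\Pi^g=\Pi^g\c P$. Hence both $P$ and $P\c g$ are lifts of the map $f\c\Pi^g\:\cC_n^g\to\cM_{[n]}^g$ through the regular covering $\Pi^g$. Because $\cC_n^g$ is connected, two such lifts differ by a single deck transformation: there is $g'\in G$ with $P\c g=g'\c P$. As maps this reads $Pg=g'P$, i.e. $PgP^{-1}=g'\in G$. Since $g\in G$ was arbitrary and $G$ is finite, $PGP^{-1}=G$, equivalently $P^{-1}gP\in G$ for every $g\in G$, which is exactly the assertion.

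The main obstacle is the second paragraph: one must be certain that the piecewise-defined local lifts of $f$ genuinely glue into one globally affine map $P$ satisfying $\Pi^g\c P=f\c\Pi^g$, and this is precisely where the hypothesis $n\ge4$ is used, through the polygonal path-connectedness of $\cC_n^g$ (Proposition~\ref{prop:CngLInearConnected}) combined with the coincidence-of-affine-pieces results (Lemma~\ref{lem:affine-mappings-coincides-segments} and Corollary~\ref{cor:affine-mappings-coincides-polyline}). Once this global lift is in hand, the final step is purely the uniqueness of lifts up to deck transformations, which needs only that $\Pi^g$ be a regular covering with deck group $G$ and that its base be connected.
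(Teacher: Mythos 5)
Your proposal is correct, and its first half --- globalizing the local affine pieces along polygonal lines via Proposition~\ref{prop:CngLInearConnected} and Corollary~\ref{cor:affine-mappings-coincides-polyline} to conclude that the single linear map $P$ satisfies $\Pi^g\c P=f\c\Pi^g$ on all of $\cC_n^g$, i.e.\ that $P$ maps fibres of $\Pi^g$ onto fibres --- is exactly the substance of the paper's argument. Where you diverge is in how the conclusion $P^{-1}gP\in G$ is extracted from this fibre-preservation. The paper chooses $X$ (as it may, by density and by the constancy of $P$ over the connected set $\cC_n^g$) so that all nonzero distances of $X$ are pairwise distinct; then every element of $S_N$ is determined by its value on the single vector $\r_X$, and since $P$ carries $\Pi^{-1}(X)$ bijectively onto $\Pi^{-1}(Y)$, the point $P^{-1}gP(\r_X)$ lies in $\Pi^{-1}(X)=G(\r_X)$, so $P^{-1}gP$ agrees with some $g'\in G$ at $\r_X$ and hence equals it. You instead observe that $\Pi^g$ is a regular covering with deck group $G$ (which does need the one-line justification that $G$ acts freely and transitively on fibres and $\cC_n^g$ is connected, so every deck transformation coincides with an element of $G$) and apply uniqueness of lifts on a connected domain to the two lifts $P\c g$ and $P$ of $f\c\Pi^g$. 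Your route is more conceptual and avoids the auxiliary dense subclass of spaces with pairwise distinct distances; the paper's route is more elementary, replacing the lift-uniqueness theorem by a finite pointwise check at one well-chosen $\r_X$. Both hinge on $n\ge4$ only through the path-connectedness of $\cC_n^g$.
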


\begin{proof}
It is easy to see that the subset of $\cM_{[n]}^g$ consisting of all spaces such that all their nonzero distances are pairwise distinct, is everywhere dense in $\cM_{[n]}^g$. Besides, if $Z$ is such a space, then for any numeration of the points from $Z$, all the components of the vector $\r_Z$ are pairwise distinct, therefore, each $Q\in S_N$ is uniquely defined by the $Q$-image of such point $\r_Z$.

Chose $X\in\cM_{[n]}^g$ in such a way that all nonzero distances in $X$ are pairwise distinct. By Proposition~\ref{prop:CngLInearConnected} and Corollary~\ref{cor:affine-mappings-coincides-polyline}, for any $\r\in\Pi^{-1}(X)$ there exists $\r'\in\Pi^{-1}(Y)$ such that the mapping $h_{\e,\r,\r'}\:U_\e(\r)\to U_\e(\r')$ is the restriction of a linear mapping with the same matrix $P$. Therefore, $P\bigl(\Pi^{-1}(X)\bigr)\ss\Pi^{-1}(Y)$. Since the matrix $P$ is non-degenerate, then for any distinct $\r_1,\r_2\in\Pi^{-1}(X)$ we have $P(\r_1)\ne P(\r_2)$. At last, since $\#\Pi^{-1}(X)=\#\Pi^{-1}(Y)$, then $P$ maps $\Pi^{-1}(X)$ bijectively onto $\Pi^{-1}(Y)$.

Thus, $\r'_X:=P^{-1}gP(\r_X)\in\Pi^{-1}(X)$, and this means that there exists $g'\in G$ such that $\r'_X=g'(\r_X)$. However, as we mentioned above, the mapping $P^{-1}gP\in S_N$ is uniquely defined by the image of $\r_X$. Thus,  $P^{-1}gP=g'\in G$.
\end{proof}

Now, Lemma~\ref{lem:P_isom} and Assertion~\ref{ass:normal}imply that for $n\ge8$ the permutation $P$ is contained in $G$, therefore the vectors $\rho_X$ and $\rho_{f(X)}$ differ by a renumeration of vertices, i.e., $X=f(X)$. Thus, we have shown that the isometry $f$ is fixed on an everywhere dense subset of the space $\cM_{[n]}^g$ and, thus, on the entire $\cM_{[n]}^g$. It remains to notice that the union $\cup_{n\ge8}\cM_{[n]}^g$ is everywhere dense in $\cM$. Main Theorem is proved.

\end{document}